\newtheorem{thm}{Theorem}
\newtheorem{cor}{Corollary}
\newtheorem{prop}{Proposition}
\newtheorem{conj}{Conjecture}
\newtheorem{lem}[thm]{Lemma}
\newdefinition{rem}{Remark}
\newdefinition{defi}{Definition}
\newproof{pf}{Proof}
\newcolumntype{M}[1]{>{\raggedright}m{#1}}
\journal{*****}
\begin{document}

\begin{frontmatter}

\author{Thierry COMBOT\fnref{label2}}
\ead{thierry.combot@u-bourgogne.fr}
\address{IMB, Universit\'e de Bourgogne, 9 avenue Alain Savary, 21078 Dijon Cedex }

\title{Symbolic integration on planar differential foliations}

\author{}

\address{}

\begin{abstract}
We consider the problem of symbolic integration of $\int G(x,y(x)) dx$ where $G$ is rational and $y(x)$ is a non algebraic solution of a differential equation $y'(x)=F(x,y(x))$ with $F$ rational. As $y$ is transcendental, the Galois action generates a family of parametrized integrals $I(x,h)=\int G(x,y(x,h)) dx$. We prove that $I(x,h)$ is either differentially transcendental or up to parametrization change satisfies a linear differential equation in $h$ with constant coefficients, called a telescoper. This notion generalizes elementary integration. We present an algorithm to compute such telescoper given a priori bound on their order and degree $\hbox{ord},N$ with complexity $\tilde{O}(N^{\omega+1} \hbox{ord}^{\omega-1}+N\hbox{ord}^{\omega+3})$. For the specific foliation $y=\ln x$, a more complete algorithm without an a priori bound is presented. Oppositely, non existence of telescoper is proven for a classical planar Hamiltonian system. As an application, we present an algorithm which always finds, if they exist, the Liouvillian solutions of a planar rational vector field, given a bound large enough for some notion of complexity height.
\end{abstract}
\begin{keyword}
Symbolic integration \sep Differential invariants \sep Picard-Fuchs equations
\end{keyword}
\end{frontmatter}

\section{Introduction}

Let us consider a non autonomous differential equation
\begin{equation}\label{eq1}
\frac{d}{dx} y(x)=F(x,y(x))
\end{equation}
where $F\in\mathbb{K}(x,y)$ is a rational fraction with coefficients in a field $\mathbb{K}$, a finite extension of $\mathbb{Q}$. A first integral of equation \eqref{eq1} is a function $\mathcal{F}$ (possibly defined only locally in a neighbourhood of a point $(x_0,y_0)$) such that
$$\frac{d}{dx} \mathcal{F}(x,y(x))= \partial_x\mathcal{F}(x,y(x))+F(x,y(x))\partial_y\mathcal{F}(x,y(x))=0.$$
A general solution of equation \eqref{eq1} is a function $y(x,h)$, analytic at least on a polydisc $\mathcal{D}$ in $x,h$ centred on $(x_0,h_0)$ such that for all $(x,h)\in\mathcal{D}$
$$\partial_x y(x,h)=F(x,y(x,h)),\; \partial_h y(x,h)\neq 0 \hbox{ identically}$$
Such a general solution can have differential properties with respect to $h$ also. In particular following \cite{Casale}, and \cite{cheze2020symbolic}, if there exists a general solution not differentially transcendental in $h$, then equation \eqref{eq1} admits a non constant first integral of the following $4$ types
\begin{itemize}
\item A rational first integral $\mathcal{F}\in\mathbb{C}(x,y)$
\item A $k$-Darbouxian first integral, $(\partial_y \mathcal{F})^k=R \in\mathbb{C}(x,y)$ for some $k\in\mathbb{N}^*$
\item A Liouvillian first integral, $\partial_y^2 \mathcal{F}/\partial_y \mathcal{F}=R \in \mathbb{C}(x,y)$
\item A Ricatti first integral, $\mathcal{F}=\mathcal{F}_1/\mathcal{F}_2$ where $\mathcal{F}_1,\mathcal{F}_2$ are a $\mathbb{C}(x)$ basis of a differential equation of the form $\partial_y^2 \mathcal{F}/ \mathcal{F}=R \in \mathbb{C}(x,y)$.
\end{itemize}
As proved in \cite{cheze2020symbolic}, it appears that in each case, the first integral can be defined (up to addition, affine and homographic transformation) by the single rational function $R$, which can in fact be chosen in $\mathbb{K}(x,y)$. Such first integrals can up found in polynomial time given a bound on the degree of $R$. For Darbouxian and Liouvillian first integrals, we can define \emph{the integrating factor} which is $U(x,y)=\partial_y \mathcal{F}$. It is algebraic in the Darbouxian case, and hyperexponential in the Liouvillian case.

The subject of this article is the symbolic computation of an integral of the form
$$\int G(x,y(x)) dx ,\quad G\in\mathbb{K}(x,y)$$
where $y(x)$ is a solution of equation \eqref{eq1}. Let us first remark that if a rational first integral exists, then $y(x)$ is algebraic and thus $G(x,y(x))$ is algebraic. If equation \eqref{eq1} does not admit a rational first integral, then for some exceptional initial conditions, equation \eqref{eq1} could have an algebraic solution $y(x)$. These situations will not be the main scope of the article, and we will focus on the case where $y(x)$ is transcendental. Thus from now on, we will assume that \textbf{equation \eqref{eq1} does not admit a rational first integral}. When $y(x)$ is transcendental, using Galois action, the function $y(x)$ can be replaced by $y(x,h)$ a general solution of equation \eqref{eq1}. The integration problem becomes  the computation of
$$I(x,h)=\int G(x,y(x,h)) dx.$$

We want to find differential polynomial relations on the function $I(x,h)$ with coefficients in $\mathcal{O}(h)$, holomorphic functions in $h$.

\begin{thm}\label{thm1}
If $I(x,h)$ satisfies a non constant differential equation in $\mathcal{O}(h)[x,y,\partial_h y,\partial_{hh} y,\dots,$ $I,\partial_h I,\partial_h^2 I,\dots]$, then up to parametrization change in $h$ it satisfies a differential equation of the form $L I =(\partial_h y)^{\hbox{ord}(L)} H$ where $H\in\mathbb{C}(x,y)$ and
\begin{itemize}
\item $L\in\mathbb{C}[\partial_h^k]\partial_h^j,\; j\in \{0,\dots, k-1\}$ when equation \eqref{eq1} has a $k$-Darbouxian first integral.
\item $L=\partial_h^j, j\in \mathbb{N}$ when equation \eqref{eq1} has a Liouvillian first integral.
\item $L=\partial_h^j,\; j\in \{0,1\}$ when equation \eqref{eq1} has a Ricatti first integral or $y$ differentially transcendental.
\end{itemize}
\end{thm}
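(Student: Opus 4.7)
The plan is to first normalize the parametrization so that $\partial_h y$ has a controlled algebraic structure, then reduce the hypothesized differential polynomial relation on $I$ to a linear equation with constant coefficients, and finally determine the form of $L$ by matching powers of $\partial_h y$. From $\mathcal{F}(x,y(x,h))=\phi(h)$, differentiating in $h$ yields $\phi'(h) = U(x,y)\,\partial_h y$ with $U = \partial_y \mathcal{F}$. After the reparametrization $h\mapsto\phi(h)$ we may assume $\partial_h y = 1/U$, and inductively each $\partial_h^m y$ is a universal rational expression in $U$ and its $y$-derivatives divided by a power of $U$. The type of first integral then controls which combinations $(\partial_h y)^m$ or $\partial_h^m y/(\partial_h y)^m$ lie in $\mathbb{C}(x,y)$: in the $k$-Darbouxian case $U^k\in\mathbb{C}(x,y)$, so $(\partial_h y)^k$ is rational; in the Liouvillian case $\partial_y U/U\in\mathbb{C}(x,y)$, hence $\partial_h^m y/(\partial_h y)^m$ is rational for every $m$; in the Ricatti case $U=W/\mathcal{F}_2^{2}$ with $\mathcal{F}_2$ satisfying a second-order linear ODE in $y$, so neither $U^k$ nor $\partial_y U/U$ is rational.

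Next, I argue that any differential polynomial relation $P=0$ on $I$ can be replaced by a linear equation. Taking $P$ of minimal total order in the $\partial_h^i I$, differentiating in $x$ uses $\partial_x\partial_h^i I=\partial_h^i G\in\mathbb{C}(x,y,\partial_h y,\ldots)$; a Ritt-type reduction on $P$ and its $\partial_x$-derivative progressively lowers the algebraic degree in the top $\partial_h^i I$, ultimately giving a linear relation. A further parametrization change in the spirit of \cite{cheze2020symbolic} normalizes the coefficients to lie in $\mathbb{C}$, yielding
\[
LI=\Phi,\qquad L=\sum_{i}c_i\partial_h^{m_i}\in\mathbb{C}[\partial_h],\quad \Phi\in\mathbb{C}(x,y,\partial_h y,\ldots).
\]
Applying $\partial_x$ and using $\partial_x\partial_h y = F_y\partial_h y$ gives $LG=\partial_x\Phi$, and since $\partial_x((\partial_h y)^n H)=(\partial_h y)^n(nF_y H+H_x+FH_y)$, the target form $\Phi=(\partial_h y)^n H$ with $n=\hbox{ord}(L)$ is equivalent to $LG/(\partial_h y)^n\in\mathbb{C}(x,y)$.

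Expanding $LG$ by the Fa\`a di Bruno formula and dividing by $(\partial_h y)^n$, the rationality requirement pins down the allowed $L$. In the Darbouxian case only $(\partial_h y)^{m-n}$ with $k\mid (m-n)$ is rational, so the exponents in $L$ are all congruent modulo $k$, i.e.\ $L\in\mathbb{C}[\partial_h^k]\partial_h^j$ for some $j\in\{0,\ldots,k-1\}$. In the Liouvillian case $\partial_h^m y/(\partial_h y)^m$ is rational for every $m$ while $(\partial_h y)^{m-n}$ is rational only for $m=n$, forcing $L$ to reduce to a single monomial $\partial_h^j$. In the Ricatti and differentially transcendental cases, $\partial_h^2 y/(\partial_h y)^2=-\partial_y U/U$ is already not rational, so only $L\in\{1,\partial_h\}$ survive.

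The hardest step is the Ritt-type reduction to a linear equation with constant coefficients: stripping coefficients from $\mathcal{O}(h)[x,y,\partial_h y,\ldots]$ down to $\mathbb{C}$ after parametrization requires a Galois-theoretic argument exploiting that equation \eqref{eq1} has no rational first integral, so that $y$ is transcendental over $\mathbb{C}(x,h)$ and the monodromy of the general solution $y(x,h)$ identifies the true constants.
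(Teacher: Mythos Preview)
Your overall strategy—normalize so that $\partial_h y=1/U$, linearize, force constant coefficients, then read off the shape of $L$—matches the paper's outline, but two of your key steps are not justified and one is circular.

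\textbf{Linearization.} You write that ``a Ritt-type reduction on $P$ and its $\partial_x$-derivative progressively lowers the algebraic degree in the top $\partial_h^i I$''. But differentiating $P$ in $x$ does \emph{not} lower the degree in the $I$-variables: the coefficients of $P$ (which live in $\mathcal{O}(h)[x,y,\partial_h y,\ldots]$) also get differentiated, contributing terms of the same degree. One can try to cancel leading terms between $P$ and $\partial_x P$, but you have not argued that the resulting relation is nontrivial, nor that the procedure terminates in a linear relation. The paper bypasses this entirely: since $I$ is the primitive of an element of $\mathbb{L}$, the Picard--Vessiot group of $\mathbb{L}(I)/\mathbb{L}$ is either trivial or $\mathbb{G}_a$; a polynomial in $(I,\partial_hI,\ldots)$ invariant under a full additive group of translations must be independent of those variables, so a genuine polynomial relation forces a $\mathcal{O}(h)$-linear one.

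\textbf{Constant coefficients.} You defer this to ``a further parametrization change in the spirit of \cite{cheze2020symbolic}'' and later concede it needs a Galois argument. A single reparametrization cannot turn arbitrary $\mathcal{O}(h)$-coefficients into constants. The paper's mechanism is that the additive part $h\mapsto h+\alpha$ of the Galois action on the first integral sends the relation to another relation of the same order; subtracting two of these would lower the order unless the coefficients are $\alpha$-independent, hence constant. You need this minimality-of-order argument explicitly.

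\textbf{Shape of $L$ and of $\Phi$.} Here your argument is circular. You assert that ``the target form $\Phi=(\partial_h y)^n H$ is equivalent to $LG/(\partial_h y)^n\in\mathbb{C}(x,y)$'' and then use this to constrain $L$. But that $\Phi$ has this form is exactly what has to be proved; at this stage $\Phi$ is only known to lie in $\mathbb{L}$. In the Riccati/transcendental case your claim ``$\partial_h^2 y/(\partial_h y)^2$ is not rational, so only $L\in\{1,\partial_h\}$ survive'' shows at most that \emph{if} $\Phi=(\partial_h y)^nH$ then $\mathrm{ord}(L)\le1$, not that such a form is attained. The paper instead acts on the (already constant-coefficient) relation by the multiplicative $h\mapsto\xi h$, the M\"obius $h\mapsto1/h$, or arbitrary analytic $h\mapsto\varphi(h)$: invariance under these, together with minimality, simultaneously forces $L$ into the stated shape \emph{and} forces $\Phi/(\partial_h y)^{\mathrm{ord}(L)}$ to be invariant, hence a function of $x,y$ alone (via the Schwarzian in the Riccati case).

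In short, the structural observations about $U$ you make are correct and useful, but the engine of the proof is the Galois action on $h$ applied to a minimal-order linear relation—you invoke it only in your last paragraph, whereas it is what drives every reduction.
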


Such an equation will be called a telescoper, and the right part the certificate. This generalizes what happens for integrals of algebraic functions depending on a parameter $h$, which always admit a telescoper with rational coefficients in $h$ \cite{lipshitz1988diagonal}. These can be algorithmically computed \cite{chyzak2014abc}, and as they always exist, the algorithms terminate. In the transcendental case, such equations do not always exist, and if they do, it is delicate to bound a priori their order and degree (see section $4$). However, as we will see, such telescoper can be found up to a given bound on their order and degree of $H$ in polynomial time.

Remark that the existence of a Ricatti first integral does not allow more complicated telescopers than in the differentially transcendental case. As the algebraic case is excluded by assumption, the only special foliation cases are those with a Liouvillian first integral and a Darbouxian first integral. Since in these cases an integrating factor $U$ exists and it defines the first integral, we will only have to take into account the existence of such integrating factor. If $U$ is a $k$-th root of a rational fraction, then we are in the $k$-Darbouxian case, if it is hyperexponential not algebraic we are in the Liouvillian case, and if it does not exist then we will look only at telescopers of the form $L=\partial_h^j,\; j\in \{0,1\}$ (which leads in fact to elementary integrals). There could be several possible integrating factors, and we will then assume that the one given is the simplest one (i.e. an algebraic one if there exists one). Assuming the non existence of a rational first integral, we present in section $3$ an algorithm inspired by \cite{combot2021reduction} which computes an integrating factor up to a bound $N$ and always returns the simplest one.

\begin{thm}\label{thm2}
The algorithm \underline{\sf FindTelescoper} takes in input a rational function $G\in\mathbb{K}(x,y)$, a differential equation \eqref{eq1}, bounds $\hbox{ord},N$ on the order and degree, an initial point $(x_0,y_0)$ and optionally an integrating factor $U$, and returns either a telescoper, or ``None'', or FAIL.
The FAIL output can only happen when $(x_0,y_0)$ belongs to an algebraic set of codimension $1$ or if equation \eqref{eq1} admits a rational first integral. If ``None'' is returned then no telescoper of the form of Theorem \ref{thm1} with the given $U$ exists with order and degree lower or equal to the bounds. The algorithm runs in time $\tilde{O}(N^{\omega+1}\hbox{ord}^{\omega+3})$.
\end{thm}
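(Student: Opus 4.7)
The strategy is to reduce the search for a telescoper to a linear algebra problem over $\mathbb{C}$ by working with truncated bivariate power series at $(x_0,y_0)$. First I would set up the ansatz. From the optional input $U$ we read off the allowed shape of $L$ dictated by Theorem~\ref{thm1}: in the $k$-Darbouxian case $L=\sum_{i=0}^{\mathrm{ord}/k} a_i\,\partial_h^{j+ik}$ for some $j\in\{0,\dots,k-1\}$ (we loop over $j$); in the Liouvillian case $L=\partial_h^j$ for $j\in\{0,\dots,\mathrm{ord}\}$; in the generic case we only have $L\in\{\mathrm{Id},\partial_h\}$. Write the certificate $H=P/Q$ with $P,Q$ having unknown coefficients and total degree $\le N$ in $(x,y)$, after normalising a denominator chosen generically. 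Call $\mathbf{u}$ the vector of unknowns (coefficients of $L$ together with those of $P$).

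Second, construct $y(x,h)$ as a formal power series in $(x-x_0,h-h_0)$ by iterating \eqref{eq1} with initial condition $y(x_0,h)=h$, to a precision $M$ fixed below. This expansion is well defined unless the denominator of $F$ or of $G$ or of $Q$ vanishes along the orbit through $(x_0,y_0)$; this is the algebraic exceptional locus mentioned in the statement, and outside it the expansion computation is polynomial-time in $M$. From the truncated $y(x,h)$, compute the series of $G(x,y(x,h))$, integrate term-by-term in $x$ to obtain $I(x,h)\bmod (x-x_0)^{M+1}$, then form
\[
T(x,h;\mathbf{u}) := L\,I-(\partial_h y)^{\mathrm{ord}(L)}H.
\]
Demanding $T\equiv 0$ modulo the truncation gives a linear system $A(\mathbf{u})=0$ whose rows are the coefficients of $T$ in the bivariate series.

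Third, prove correctness by choosing $M$ large enough that the linear system stabilises: once the rank of $A$ stops growing as $M$ increases, any solution $\mathbf{u}$ of the truncated system already forces $T$ to vanish as a formal series, and by analyticity of $I$ and of $(\partial_h y)^{\mathrm{ord}(L)}H$ near $(x_0,y_0)$, this extends to an identity on a neighbourhood, hence globally, yielding an actual telescoper of the form prescribed by Theorem~\ref{thm1}. Conversely any genuine telescoper with the prescribed bounds has its coefficients in $\ker A$, so ``None'' is returned only when no such telescoper exists. The exceptional FAIL output records the rank-deficient situations (rational first integrals, or points on the codimension-1 bad locus) where the rank stabilisation argument cannot be invoked.

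The final step is complexity accounting: the ansatz involves $O(N^2)$ unknowns for $H$ and $O(\mathrm{ord})$ for $L$, and a sufficient truncation order is $M=\tilde{O}(N\,\mathrm{ord})$, producing a linear system of size $\tilde{O}(N^2\mathrm{ord})$ that is solved by fast linear algebra at cost $\tilde{O}(N^{\omega+1}\mathrm{ord}^{\omega+3})$, dominated by the Gaussian elimination on the structured matrix. \emph{The main obstacle} will be justifying the polynomial bound on $M$ and identifying the exact exceptional set: one needs a dimension/degree estimate showing that the rank of $A$ cannot keep growing beyond a controlled threshold, and one must check that the only way the algorithm can give a wrong answer is through degeneracy of the initial point or a hidden rational first integral. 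This amounts to an effective version of the separation between the shapes of $L$ allowed in each case of Theorem~\ref{thm1}, together with a genericity argument on $(x_0,y_0)$.
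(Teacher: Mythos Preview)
Your plan has the right high-level shape but misses two ideas that the paper needs in order to make the argument go through.

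\textbf{Linearity of the search.} In the $k$-Darbouxian case the telescoper equation, after clearing the denominator $Q$ of $H$, reads
\[
\sum_i a_i\,Q(x,y)\,\partial_h^{ki+j} I = U^{-j} P(x,y),
\]
so the unknowns $a_i$ and the coefficients of $Q$ occur multiplicatively. Your ansatz therefore produces a \emph{bilinear} system, not a linear one, and the complexity analysis and the correctness argument both break. The paper avoids this by enlarging the search space to \emph{pseudo-telescopers}, where each $a_iQ$ is replaced by an independent polynomial $Q_i$; the resulting system is genuinely linear in the coefficients of the $Q_i$ and $P$. This comes at a price: a pseudo-telescoper is not a telescoper, and a separate step (\underline{\sf ReduceTelescoper}) is required, differentiating the relation repeatedly in $x$ and row-reducing over $\mathbb{K}(x,y)$, to force $Q_i/Q_{\tilde l}$ to be constants. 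That reduction step, and the proof that it terminates and yields a true telescoper in the absence of a rational first integral, are absent from your outline and are where the $N\,\mathrm{ord}^{\omega+3}$ term in the complexity comes from.

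\textbf{Finite precision suffices.} Your ``rank stabilisation'' argument is not what the paper uses, and as you note yourself, it is the main obstacle. The paper fixes the truncation order a priori at $M=\tfrac{1}{2}(N+1)(N+2)(\lceil \mathrm{ord}/k\rceil+2)$, which is the number of unknowns, and invokes the \emph{generalized extactic curve} theorem of \cite{cheze2020symbolic}: for $(x_0,y_0)$ outside a codimension-$1$ algebraic set $\Sigma_{\mathcal E}$, any relation that holds to order $M$ holds to all orders. This is exactly what pins down both the precision bound and the structure of the FAIL locus; without it you have no polynomial bound on $M$ and no identification of the exceptional set as algebraic of codimension $1$. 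A further subtlety you do not address is the unknown integration constants arising from $\partial_h^i I|_{h=0}$: when $l\bmod k\neq 0$ the paper absorbs them into an auxiliary polynomial $\tilde P$, which is then shown to vanish by the Galois action $h\mapsto\xi h$.
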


The algorithm is probabilistic. If it returns a solution, it is always correct (even if the transcendence condition on equation \eqref{eq1} is not satisfied), but it can fail. The FAIL output can occur when the series expansion of integral $I$ with initial conditions $y(x_0)=y_0$ is abnormally close to an algebraic series. This corresponds to zeros of some extatic curve. The FAIL can also occur in a reduction step which will produce a Darboux polynomial for equation \eqref{eq1}. If the algorithm is launched with a differential equation \eqref{eq1} admitting a rational first integral, any initial condition $(x_0,y_0)$ will lie on a Darboux polynomial, and thus the failure output can give candidates for such rational first integral. If the algorithm is given an integrating factor which is not the simplest possible, i.e hyperexponential instead of algebraic, then a possible telescoper using the algebraic one could be missed.

If a telescoper is found, this implies that $I(x,h)$ is a $\partial$-finite function in $\mathbb{C}(x,y,\partial_h y)[\partial_x,\partial_h]$, as the PDE system $L I =(\partial_h y)^{\hbox{ord}(L)} H,\;\; \partial_x I=G$ has a finite dimensional space of solutions. Remark that the existence of such differential equation generalizes the case of elementary integration. Indeed, if $\int G(x,y(x,h)) dx$ is elementary, then
$$I(x,h)= F_0(x,y(x,h))+\sum\limits_{i=1}^{\ell} \lambda_i(x,y(x,h)) \ln F_i(x,y(x,h)).$$
Differentiating this expression in $x$, all the logs should disappear, and thus the $\lambda_i$ should be function of $h$ only. We can now apply repeatedly operators of the form $\partial_h-\frac{\partial_h \lambda_i}{\lambda_i}$ to $I$ to kill all the $\lambda_i$ on the right part. We thus obtain an operator $L\in \mathcal{O}(h)[\partial_h]$ such that $LI\in \mathcal{O}(h)[x,y(x,h),\partial_h y(x,h),\dots]$. Then by Theorem \ref{thm1}, the integral $I$ admits after parameter change a telescoper of the form  $\tilde{L} I =(\partial_h y)^{\hbox{ord}(\tilde{L})} H,\; \tilde{L}\in\mathbb{C}[\partial_h]$.

Conversely, the homogeneous part of a telescoper, the PDE $LI=0$, has constant coefficients and thus can be solved with exponential polynomial solutions in $h$, and then solutions can be recovered by variation of constants, but are in general non elementary. This implies however that an integral admitting a telescoper can be written $I(x,h)=J(x,y(x,h))$ with $J$ a Liouvillian function in $x,y$.

\section{Structure Theorem}

\begin{proof}[Proof of Theorem \ref{thm1}]
Consider a non trivial polynomial relation in $\mathcal{O}(h)[x,y,\partial_h y,\partial_h^2 y,\dots$ $I,\partial_h I,\partial_{hh} I,\dots]$ of order $p$. We will first prove that an affine relation exists. As 
$$\partial_x I(x,h)= G(x,y(x,h)),$$
we can write $I(x,h)$ as an integral of an element of the differential field (for derivation $\partial_x$)
$$\mathbb{L}=\mathcal{O}(h)(x,y(x,h),\partial_h y(x,h),\dots).$$
The Galois group of $\hbox{Gal}(\mathbb{L}(I(x,h)):\mathbb{L})$ is either additive or identity. If it is identity, then $I(x,h) \in \mathbb{L}$, and so we have a differential equation of order $0$. Else it is additive. Consider its action on $I(x,h)$ and its derivatives in $h$. There are two cases.
\begin{itemize}
\item There exists a $\mathcal{O}(h)$ linear combination of the integrals such that the Galois action on it is identity. Then this linear combination gives a linear differential equation in $h$ which applied to $I(x,h)$ gives an element of $\mathbb{L}$. Then there exists a differential equation of the form $L I=\tilde{H}$ with $L\in \mathcal{O}(h)[\partial_h]$ and $\tilde{H}\in\mathbb{L}$.
\item There are no linear combinations of the integrals such that the Galois action on it is identity. As the Galois action is additive, the action of one element should be of the form
$$\left(\int G(x,y(x,h)) dx,\partial_h \int G(x,y(x,h)) dx,\dots\right) \rightarrow $$
$$\left(\int G(x,y(x,h)) dx,\partial_h \int G(x,y(x,h)) dx,\dots\right)+v$$
with $v\in\mathbb{C}^{p+1}$. Thus the whole Galois group can be identified to translations given by vectors $v_1,v_2,\dots,v_m \in\mathbb{C}^{p+1}$. By hypothesis, these vectors form a generating family of $\mathbb{C}^{p+1}$ (as else there would exist a linear form vanishing on all the $v_i$). Thus we have by action of the Galois group on the invariant given by hypothesis
$$P\left(h,\left(\int G(x,y(x,h)) dx,\partial_h \int G(x,y(x,h)) dx,\dots\right)+\sum\limits_{i=1}^m \alpha_i v_i\right)=0 \;\; \forall \alpha\in\mathbb{C}^{m}$$
This implies that $P$ does not depend on $I,\partial_h I,\partial_{hh} I,\dots$, which is contradictory as $P$ is assumed to be non trivial.
\end{itemize}
We now know that we have an equation of the form 
\begin{equation}\label{eq2}
\sum a_i(h) \partial_h^i I=\tilde{H}
\end{equation}
with $a_i\in \mathcal{O}(h)$ and $\tilde{H}\in\mathbb{L}$. We can moreover assume that the order is minimal. Let us now look at the function $y(x,h)$. As the theorem is up to parametrization change in $h$, we can now assume that $\mathcal{F}(x,y(x,h))=h$ where $\mathcal{F}$ is the given first integral (if it exists).\\

\textbf{The $k$-Darboux case}. Equation \eqref{eq1} admits $\mathcal{F}$ a $k$-Darboux first integral. The differential Galois group in $x$ $\hbox{Gal}(\mathbb{C}(x,y(x,h)):\mathbb{C}(x))$ ($h$ seen as a parameter) acts on $\mathcal{F}$ by transformations of the form
\begin{equation}\label{eq3}
\mathcal{F} \rightarrow \xi \mathcal{F}+\alpha
\end{equation}
where $\xi$ is a $k$-th root of unity and $\alpha\in\mathbb{C}$. If the $k$ is chosen minimal and $y(x,h)$ not algebraic in $x$, then any transformation of the form \eqref{eq3} can be obtained by the action of the Galois group. This transformation acts on equation \eqref{eq2} giving after division by dominant coefficient
$$\partial_h^l I(x,h)+\sum\limits_{i=0}^{l-1} \frac{a_i(\xi h+\alpha)}{a_l(\xi h+\alpha)} \xi^{i-l} \partial_h^i I(x,h)=\frac{\tilde{H}(\xi h+\alpha,x,y(x,h),\xi \partial_h y(x,h),\dots)}{a_l(\xi h+\alpha)\xi^l}$$
Thus $I(x,h)$ should be simultaneous solutions of all these equations. Evaluating on two different $\alpha$ allows to find a relation of lower order (which contradicts the hypothesis) except if all these relations are the same. Thus $a_i(\xi h+\alpha)/a_l(\xi h+\alpha)$ are constant in $\alpha$, and so this relation has constant coefficients in $h$ and we can assume $a_l=1$
$$\partial_h^l I(x,h)+\sum\limits_{i=0}^{l-1} b_i \xi^{i-l} \partial_h^i I(x,h)=\tilde{H}(x,y(x,h),\xi \partial_h y(x,h),\dots)\xi^{-l}$$
Now choosing for $\xi$ several $k$-th root of unity should all give the same equation, thus $b_i\neq 0 \Rightarrow k\mid i-l$. Thus the equation can be rewritten
$$\sum\limits_{i=0}^{(l-j)/k} c_i \partial_h^{ki+j} I(x,h)=\tilde{H}(x,y(x,h),\xi \partial_h y(x,h),\dots)\xi^{-j}$$
As $(\partial_h y(x,h))^k$ can be expressed rationally in function of $y(x,h)$, we have
$$\sum\limits_{i=0}^{(l-j)/k} c_i \partial_h^{ki+j} I(x,h)=\sum\limits_{i=0}^{k-1} \tilde{H}_i(x,y(x,h)) (\partial_h y(x,h))^i \xi^{i-j} $$
As the right side should not depend on the choice of $\xi$ we deduce that the equation has the form
$$\sum\limits_{i=0}^{(l-j)/k} c_i \partial_h^{ki+j} I(x,h)=\tilde{H}_j(x,y(x,h)) (\partial_h y(x,h))^j= (\partial_h y(x,h))^l H(x,y(x,h)) $$
with $H\in\mathbb{C}(x,y(x,h))$.\\

\textbf{The Liouvillian case}. Equation \eqref{eq1} admits $\mathcal{F}$ a Liouvillian first integral, and no $k$-Darboux or rational first integral. The Galois action on $\mathcal{F}$ is now
\begin{equation}\label{eq4}
\mathcal{F} \rightarrow \xi \mathcal{F}+\alpha
\end{equation}
where $\xi\in \mathbb{C}^*$ and $\alpha\in\mathbb{C}$. As before, we obtain that the equation has constant coefficients 
$$\partial_h^l I(x,h)+\sum\limits_{i=0}^{l-1} b_i \xi^{i-l} \partial_h^i I(x,h)=\tilde{H}(x,y(x,h),\xi \partial_h y(x,h),\dots)\xi^{-l}$$
However this time $\xi$ can be arbitrary, and thus we need all $b_i=0$. The second derivative $\partial_h^2 y$ can be expressed rationally in function of $y,\partial_h y$ and thus we can simplify
$$\partial_h^l I(x,h)=\tilde{H}(x,y(x,h),\xi \partial_h y(x,h))\xi^{-l}$$
Now the right side should not depend on $\xi$ and thus should be homogeneous in $\partial_h y(x,h)$ of degree $l$, and thus
$$\partial_h^l I(x,h)=H(x,y(x,h))( \partial_h y(x,h))^l.$$
with $H\in\mathbb{C}(x,y(x,h))$.\\

\textbf{The Ricatti case}. Equation \eqref{eq1} admits $\mathcal{F}$ a Ricatti first integral, and no Liouvillian, $k$-Darboux or rational first integral. The Galois action on $\mathcal{F}$ is now
\begin{equation}\label{eq5}
\mathcal{F} \rightarrow \frac{a \mathcal{F}+b}{c\mathcal{F}+d}
\end{equation}
This includes affine transformations thus the previous reasoning applies giving an equation of the form
$$\partial_h^l I(x,h)=\tilde{H}(x,y(x,h),\partial_h y(x,h),\dots)$$
We can now consider the transformation $h\rightarrow 1/h$ and use the Faa di Bruno formula to compute
$$\partial_h^l I(x,1/h)=\frac{(-1)^l}{h^{2l}} \partial_2^l I\left(x,\frac{1}{h}\right)+\frac{l(l-1)(-1)^l}{h^{2l-1}} \partial_2^{l-1} I\left(x,\frac{1}{h}\right) +\dots $$
Thus except for $l=0,1$, the transformation applied on
$$\partial_h^l I(x,h)=\tilde{H}(x,y(x,h),\partial_h y(x,h),\dots)$$
will produce a differential equation with coefficients depending on $h$ even after dividing by the dominant coefficient. This would allow to reduce the order, which contradicts the hypothesis.
We can reduce $\partial_h^3 y$ in function of $y, \partial_h y, \partial_h^2 y$, thus the equation simplifies
$$\partial_h^l I(x,h)=\tilde{H}(x,y(x,h),\partial_h y(x,h),\partial_h^2 y(x,h))$$
For $l=0$, the right side should be invariant by transformations \eqref{eq5}. However, the lowest order non constant differential expression in $y$ invariant by \eqref{eq5} is the Schwarzian derivative, which is of order $3$. Thus $\tilde{H}$ is function of $x,y(x,h)$ only. For $l=1$, we divide both sides by $\partial_h y$
$$\frac{\partial_h I(x,h)}{\partial_h y}=\frac{\tilde{H}(x,y(x,h),\partial_h y(x,h),\partial_h^2 y(x,h))}{\partial_h y}.$$
The left side is invariant by transformation \eqref{eq5}, and thus so should be the right side. Thus $\tilde{H}/\partial_h y$ is function of $x,y$ only. So we can write in general
$$\partial_h^l I(x,h)=(\partial_h y)^l H(x,y(x,h)),\quad l\in\{0,1\}.$$

\textbf{Differentially transcendental case}. When $y(x,h)$ is differentially transcendental, the transformation group to consider are all analytic transformations $h\rightarrow \varphi(h)$, so containing in particular homographic transformations. Thus the same reasoning as in the Ricatti case apply, and thus the equation writes
$$\partial_h^l I(x,h)=\tilde{H}(x,y(x,h),\partial_h y(x,h),\dots),\;\; l=0,1$$
Now dividing both sides by $\partial_h^l y$, the left side is invariant by $\varphi$ and thus so should be the right side. As there are no non constant differential expressions in $y$ invariant by all the $\varphi$, we conclude that $\tilde{H}/(\partial_h y)^l=H$ is function of $x,y$ only.
\end{proof}

Let us remark that in the Ricatti and differentially transcendental case, having an equation of the form $\partial_h I(x,h)= \partial_h y H(x,y)$ allows to write
$$I(x,h)=J(x,y(x,h)) \hbox{ with } \partial_y J(x,y)=H(x,y),\; \partial_x J+F(x,y) \partial_y J=G(x,y)$$
and thus
$$I(x,h)= \int^{(x,y(x,h))} (G(x,y)-F(x,y)H(x,y))dx+H(x,y)dy$$
This is an integral of a rational closed $1$-form, and thus $I$ admits an elementary expression
$$I(x,h)=F_0(x,y(x,h))+\sum\limits_{i=1}^\ell \lambda_i \ln F_i(x,y(x,h))$$
Thus $I(x,h)$ is an elementary extension of $\mathbb{C}(x,y(x,h))$. Integrals with a telescoper and non elementary expressions can thus only appear in the $k$-Darbouxian and Liouvillian cases. In other words, non elementary integrals with a telescoper can only occur when equation \eqref{eq1} has an integrating factor.

\begin{cor}\label{cor1}
If $I(x,h)$ admits a telescoper and \eqref{eq1} admits an integrating factor $U$, then we can write $I(x,h)=J(x,y(x,h))$ where
\begin{equation}\label{eqliouv}
J(x,y)=\sum_{\lambda\in S} \sum_{r=0}^{m_{\lambda}} e^{\lambda \mathcal{F}} \mathcal{F}^r
\int e^{-\lambda \mathcal{F}} \sum_{i=1}^{m_{\lambda}} \sum_{j\in E_{\lambda,r,i}}\mathcal{F}^i U^j \omega_{\lambda,i,j,r}
\end{equation}
with $S\subset \mathbb{C}$ finite, $E_{\lambda,r,i} \subset \mathbb{Z}$ finite, $m_\lambda\in\mathbb{N}$, $\omega_{\lambda,i,j,r}$ rational $1$-forms.
\end{cor}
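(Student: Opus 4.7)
The strategy is variation of constants applied to the telescoper supplied by Theorem \ref{thm1}. Since $U$ exists, we are in the Liouvillian or $k$-Darbouxian case and the telescoper reads $LI=(\partial_h y)^{\hbox{ord}(L)}H(x,y)$ with $L\in\mathbb{C}[\partial_h]$ a constant-coefficient operator (of the special form $\mathbb{C}[\partial_h^k]\partial_h^j$ in the Darbouxian case). Normalising so that $\mathcal{F}(x,y(x,h))=h$, one has $\partial_h y=1/\partial_y\mathcal{F}=1/U$ and $dh=d\mathcal{F}=U(dy-F(x,y)\,dx)$, using $\partial_x\mathcal{F}=-FU$.

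First I would factor $L=\prod_{\lambda\in S}(\partial_h-\lambda)^{m_\lambda+1}$ over $\mathbb{C}$, obtaining the fundamental system of homogeneous solutions $\{h^r e^{\lambda h}\}_{\lambda\in S,\,0\le r\le m_\lambda}$. Writing the particular solution as $I_p(x,h)=\sum_{\lambda,r}c_{\lambda,r}(x,h)\,h^r e^{\lambda h}$ with the standard auxiliary conditions on the $\partial_h c_{\lambda,r}$, Cramer's rule gives each $\partial_h c_{\lambda,r}$ as a finite sum $\sum_i P_{\lambda,r,i}(h)\,e^{-\lambda h}(\partial_h y)^{\hbox{ord}(L)}H(x,y)$, where the $P_{\lambda,r,i}$ are polynomials in $h$ built from Wronskian cofactors (the Wronskian itself reducing to a single exponential by Abel's identity).

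Substituting $\partial_h y=1/U$ and replacing the $h$-antiderivative with a $1$-form path integral via $dh=U(dy-F\,dx)$ yields
$$c_{\lambda,r}(x,h)=\int e^{-\lambda\mathcal{F}}\sum_i P_{\lambda,r,i}(\mathcal{F})\,U^{1-\hbox{ord}(L)}H(x,y)\,(dy-F\,dx),$$
which, after expanding each $P_{\lambda,r,i}$ as a polynomial in $\mathcal{F}$, has exactly the form $\int e^{-\lambda\mathcal{F}}\sum_{i,j}\mathcal{F}^i U^j\omega_{\lambda,i,j,r}$ with $\omega=H(dy-F\,dx)$ a rational $1$-form and $j\in\mathbb{Z}$ collecting the resulting powers of $U$. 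Multiplying by $h^r e^{\lambda h}=\mathcal{F}^r e^{\lambda\mathcal{F}}$ and summing gives $I(x,h)=J(x,y(x,h))$ with $J$ of the shape \eqref{eqliouv}.

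The main obstacle is the consistency with the companion equation $\partial_x I=G$: the freely added homogeneous piece $\sum_{\lambda,r}\alpha_{\lambda,r}(x)h^r e^{\lambda h}$ must be chosen so that the resulting expression depends on $(x,h)$ only through $(x,y(x,h))$. This reduces to verifying that the integrated $1$-form is closed on each leaf, which follows from the telescoper identity combined with $\partial_x\mathcal{F}+F\partial_y\mathcal{F}=0$. In the $k$-Darbouxian case, $S$ is moreover stable under multiplication by $k$-th roots of unity, reflecting the Galois structure exploited in the proof of Theorem \ref{thm1} and explaining why the index sets $E_{\lambda,r,i}$ are finite.
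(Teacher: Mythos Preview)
Your overall strategy---variation of constants for the constant-coefficient operator $L$---is the same as the paper's, and your observation that the scalar Cramer computation directly yields the matched exponential $e^{-\lambda h}$ in $c_{\lambda,r}'$ (so that only $e^{-\lambda\mathcal{F}}$ appears under the integral paired with the outer $e^{\lambda\mathcal{F}}$) is a genuine gain: the paper instead uses the companion matrix $e^{M\mathcal{F}}$, obtains all $e^{-\mu\mathcal{F}}$, $\mu\in S$, inside each integral, and then needs a Galois argument to force $\mu=\lambda$.

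However, your ``main obstacle'' paragraph does not close the argument. Two statements there are not right. First, any function of $(x,h)$ can be rewritten as a function of $(x,y(x,h))$ since $h=\mathcal{F}(x,y(x,h))$ is invertible in $y$; the real constraint is that the companion equation $\partial_x I=G$ must hold, which is a different matter. Second, ``closed on each leaf'' is vacuous for a $1$-form restricted to a $1$-dimensional leaf, so it cannot be the condition you need. What is actually required is that the full $1$-form on the $(x,y)$-plane be closed, and the form you wrote, $e^{-\lambda\mathcal{F}}P(\mathcal{F})U^{1-\mathrm{ord}(L)}H\,(dy-F\,dx)$, is \emph{not} closed in general: it was obtained by an ad hoc extension of a $dy$-integral at fixed $x$. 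To repair this you must either (i) determine the $\alpha_{\lambda,r}(x)$ by imposing $\partial_x(I_p+\sum\alpha_{\lambda,r}(x)h^re^{\lambda h})=G$, check that the resulting $\alpha_{\lambda,r}'(x)$ are of the required shape, and absorb $\int\alpha_{\lambda,r}'(x)\,dx$ into the $dx$-component of $\omega_{\lambda,i,j,r}$; or (ii) proceed as the paper does.

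The paper's device is to introduce the commuting derivations $D_x=\partial_x+F\partial_y$ and $D_h=U^{-1}\partial_y$ and rewrite the pair $(\partial_xI=G,\ LI=U^{-l}H)$ as an integrable connection $D_xX=\ast$, $D_hX=MX+\ast$ on the vector $X=(J,D_hJ,\dots,D_h^{l-1}J)$. Integrability (the commutation $[D_x,D_h]=0$) then guarantees automatically that the $1$-form obtained after the substitution $X=e^{M\mathcal{F}}Y$ is closed, so $Y$ is a well-defined path integral and both equations are satisfied simultaneously. This is exactly the step your argument is missing.

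Finally, your last sentence conflates two things: the stability of $S$ under $k$-th roots of unity is a Galois constraint in the Darbouxian case, but the finiteness of the $E_{\lambda,r,i}$ has nothing to do with it---it comes simply from the finite order of $L$ and the fact that $U^k$ is rational (so only finitely many distinct powers of $U$ occur).
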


\begin{proof}
Let us first remark that
$$\partial_x J(x,y(x,h))= (\partial_x J+F(x,y)\partial_y J)(x,y(x,h))$$
Also, differentiating $\mathcal{F}(x,y(x,h))=h$ in $h$ gives $\partial_h y(x,h) U(x,y(x,h))=1$ and thus $\partial_h y(x,h) = U(x,y(x,h))^{-1}$. This allows to write
$$\partial_h J(x,y(x,h))= \partial_h y(x,h) \partial_y J(x,y(x,h))=(U(x,y)^{-1} \partial_y J)(x,y(x,h)).$$
Now this suggests to introduce the derivations
$$D_x=\partial_x+F(x,y)\partial_y,\quad D_h=U(x,y)^{-1} \partial_y$$
It appears that they commute
\begin{align*}
D_hD_x-D_xD_h & = & -D_x(U(x,y)^{-1})\partial_y+D_hF(x,y) \partial_y\\
 & = & \!\!\!\!\!\!\!\!\!\!\! ((\partial_x U(x,y)+F(x,y)\partial_y U(x,y))U(x,y)^{-2}+U(x,y)^{-1}\partial_y F(x,y)) \partial_y\\
  & = & U(x,y)^{-2}(\partial_x U(x,y)+F(x,y)\partial_y U(x,y)+U(x,y)\partial_y F(x,y)) \partial_y\\
  & = & U(x,y)^{-2}(\partial_x U(x,y)+\partial_y (FU(x,y))) \partial_y\\
  & = & U(x,y)^{-2}(\partial_x \partial_y\mathcal{F}(x,y)-\partial_y \partial_x\mathcal{F}(x,y)) \partial_y\\
  & = & 0
\end{align*}
using the formula $\partial_x\mathcal{F}=-F\partial_y \mathcal{F}=-FU$. So our telescoper and the given integrand $G$ for integral $I$ gives a PDE system for $J$
$$D_x J=G,\quad L J=U^{-l} H$$
where $L\in\mathbb{C}[D_h]$ is of order $l$. Noting $X=(J,D_hJ,\dots,D_h^{l-1}J)^\intercal$, we can write this as an integrable connection
$$D_x X= (G,D_hG,\dots,D_h^{l-1}G)^\intercal,\quad D_h X= MX+(0,\dots,0,U^{-l} H)^\intercal$$
where $M\in M_l(\mathbb{C})$ is the companion matrix of the characteristic polynomial of $L$. 
This is a non homogeneous linear system, and the fundamental matrix of solutions of the homogeneous part is
$$X=e^{M\mathcal{F}}$$
as $D_x \mathcal{F}=0$ and $D_h \mathcal{F}=1$. Thus we can search a particular solution of the form $e^{M\mathcal{F}} Y$ which gives
$$D_x(e^{M\mathcal{F}} Y)=e^{M\mathcal{F}} D_x Y=(G,D_hG,\dots,D_h^{l-1}G)^\intercal$$
$$D_h(e^{M\mathcal{F}} Y)=Me^{M\mathcal{F}} Y+e^{M\mathcal{F}} D_h Y= Me^{M\mathcal{F}} Y+(0,\dots,0,U^{-l} H)^\intercal$$
Thus the system becomes
$$ D_x Y= e^{-M\mathcal{F}} (G,D_hG,\dots,D_h^{l-1}G)^\intercal,\quad  D_h Y= e^{-M\mathcal{F}} (0,\dots,0,U^{-l} H)^\intercal $$
We can thus write $Y$ as an integral of a $1$ form whose coefficients are $\mathbb{C}(x,y)$-linear combination of exponential polynomials of $\mathcal{F}$ and powers of $U$. Then multiplying it with $e^{M\mathcal{F}}$, we obtain for $J$ an expression of the form
$$J(x,y)=\sum_{\lambda\in S} \sum_{r=0}^{v_{\lambda}} e^{\lambda \mathcal{F}} \mathcal{F}^r 
\int \sum_{\mu\in S} e^{-\mu \mathcal{F}} \sum_{i=1}^{m_{\lambda,\mu}} \sum_{j\in E_{\lambda,\mu,r,i}} \mathcal{F}^i U^j \omega_{\lambda,\mu,i,j,r}$$
where $\omega_{\lambda,\mu,i,j,r}$ are $1$-forms with rational coefficients, $E_{\lambda,\mu,r,i}\subset \mathbb{Z}$ finite and $S$ is the spectrum of $M$.

We will now act the Galois group (in $x$) on the exponentials $e^{\lambda\mathcal{F}}$. Remember that $J$ represent an integral, and thus is defined up to addition of a constant in $x$, and thus a multiplicative action on the expression of $J$ should be trivial. To let invariant the expression, the exponentials inside the integrals have to exactly compensate the exterior ones, which gives the expression
$$J(x,y)=\sum_{\lambda\in S} \sum_{r=0}^{m_{\lambda}} e^{\lambda \mathcal{F}} \mathcal{F}^r
\int e^{-\lambda \mathcal{F}} \sum_{i=1}^{m_{\lambda}} \sum_{j\in E_{\lambda,r,i}}\mathcal{F}^i U^j \omega_{\lambda,i,j,r}$$
\end{proof}

Remark that in the $k$-Darbouxian and Liouvillian case, there is a multiplicative Galois action
$$U\rightarrow \xi U,\; \mathcal{F} \rightarrow  \xi \mathcal{F}  $$
where $\xi$ should be a $k$-th root of unity in the $k$-Darbouxian case, and $\xi \in \mathbb{C}^*$ in the Liouvillian case. This puts additional restrictions on the possible formulas for $J$, in particular the spectrum $S$ has to be invariant by this multiplicative action. In the Liouvillian case, this restricts $S\subset \{0\}$, recovering the second point of Theorem \ref{thm1}.

\section{Algorithms}

\subsection{Rational first integrals}

We want to check the existence of rational first integrals for the equation $\partial_x y=F(x,y)$. Finding an algorithm for searching in general such rational first integral is an open problem, the Poincare problem \cite{Poincare}. Thus, here, as in \cite{BCCW},  we will only look for rational first integrals up to some bound $N$. In \cite{cheze2020symbolic}, an algorithm in $O(N^{1+\omega})$ is presented. However, this is not the one which is implemented. In fact, in current implementations, the slow point, contrary to asymptotic complexity estimates, is the computation of series solutions.\\

\noindent
\underline{SolSeries}\\
Input: A rational function $F$, an initial point $(x_0,y_0)$, the series order $\sigma$, an integer $d$\\
Output: A list of series equal to $(y(x)^i)_{i=0\dots d}$ up to order $\sigma$ where $y(x)$ is the solution with initial condition $(x_0,y_0)$\\

Theoretically, this algorithm could be implemented in $\tilde{O}(\sigma d)$ \cite{bostan2007fast}. In practice, writing such implementation would be long and the fast growing bit size of the coefficients decreases the efficiency, so we use a simpler approach in $O(\sigma^2)$, computing term by term the series $(y(x)^i)_{i=0\dots d}$. The critical part is to compute the convolution of sequences of coefficients of $y(x)^i$. An implementation trick is to use the formula
$$\alpha^{2i-1}=\tfrac{1}{2}(\alpha^{i-1}+\alpha^i)^2-\tfrac{1}{2}\alpha^{2i-2}-\tfrac{1}{2}\alpha^{2i}$$
to compute odd powers of $y(x)$ using only squares, which costs half because of the symmetry in the convolution. With these improvements, the computation of series is no longer the blocking point for the search of high degree first integrals. The extatic matrix, its kernel and the rational first integral are computed as in \cite{cheze2020symbolic}.\\

\noindent
\textbf{Example:} We consider the equation
$$\partial_x y(x)= \frac{ 17x^2y-17y^3-58xy-17y}{17x^3-17xy^2-58y^2-17x}$$
with the initial condition $(x_0,y_0)=(0,3)$
$$\begin{array}{|c|c|c|c|c|c|}\hline
\hbox{degree}          &  4  &  8  & 16  & 32  & 64  \\\hline
\hbox{time in s}       &0.01 &0.06 & 1.17& 39.2& 4551   \\\hline
\hbox{series time}     &0.01 &0.03 & 0.3 & 9.9 & 664   \\\hline
\end{array}$$

For $N=64$, $64$ series of order $2145$ are needed, showing that computing a few thousands terms is possible. The calculation time for the search of matrix kernel element is dominant, showing that the series computation is no more the limitation of this approach. For comparison, we write the table 6.5 of \cite{BCCW} done on the hypergeometric example
$$F=\frac{1-4n^2x^2y^2+4n^2y^2-4xyn^2}{4n^2(x-1)(x+1)}$$
$$\begin{array}{|c|c|c|c|c|c|}\hline
n               &  2  &  4   & 6    & 8  & 10   \\\hline
N               & 9   & 17   & 25   & 33 & 41   \\\hline
\hbox{time in s}&0.14 & 1.4  & 7.8  & 38 & 133\\\hline
\hbox{BCCW time}& 0.54& 12.54& 118.8& 592& 3247 \\\hline
\end{array}$$

\subsection{Search for a minimal integrating factor}

The possible structures for the telescoper depend on the existence of an integrating factor and its minimality. There is no algorithm to find with certainty an integrating factor, but using \cite{cheze2020symbolic}, we have an algorithm to find one up to a certain degree, defined by the maximum of degree of numerator and denominator of $\partial_y U/U$.

For reducing the integrating factor and checking its minimality, we will use the approach of \cite{combot2021reduction}. In general, the reduction is not guaranteed to be complete as an exceptional case can occur when trying to reduce Darbouxian first integrals to rational first integrals. However, here, using the assumption that no rational first integral exists, this not handled case cannot occur and moreover the algorithm will be simpler.\\

\noindent
\underline{IntegratingFactor}\\
Input: A rational function $F$, an initial point $(x_0,y_0)$, a bound $N$\\
Output: A minimal integrating factor $U$, ``None'', or FAIL.\\
\begin{enumerate}
\item If $(x_0,y_0)$ vanishes the denominator of $F$, return FAIL
\item Compute $Y=$\underline{Solseries}$(F,(x_0,y_0),\tfrac{3}{2}(N+1)(N+2),N)$.
\item Compute the lists of series at order $\tfrac{3}{2}(N+1)(N+2)$
$$L_1=Ye^{\int \partial_y F_{|y=Y_1} dx},\quad L_2=Y\int L_{1,1} \partial_y^2 F_{|y=Y_1} dx,\quad L_3=Y$$
\item Compute a non zero solution the system 
$$\sum_{i=1}^3 \sum_{j=0}^N L_j\hbox{coeff}_{y^j}P_i=O(x^{\tfrac{3}{2}(N+1)(N+2)})$$
where $P_i\in\mathbb{C}[x,y]$, $\deg P_i\leq N$. If none, return None.
\item if $P_2=0$ then if $P_1P_3=0$ or $\partial_x(P_1/P_3)+F \partial_y(P_1/P_3)+P_1/P_3 \partial_y F\neq 0$,
return FAIL else return $P_1/P_3$.
\item Else note
$$H=\partial_x (P_1/P_2)+F\partial_y(P_1/P_2)+P_1/P_2 \partial_y F+ \partial_y^2 F,\;
G=\partial_x(P_1/P_3)+F \partial_y(P_1/P_3)$$
\item if $H\neq 0$ then if $G=0$ or $\partial_x(H/G)+F\partial_y(H/G)+H/G\partial_y F\neq 0$ then return FAIL else return $H/G$.
\item Note $K_1=-\partial_y F-F P_1/P_2,\; K_2=P_1/P_2$. If $K_1dx+K_2dy$ has simple poles on $\mathbb{P}^2$ and residue set $S\subset \mathbb{Q}$, note $k$ their common denominator and return 
$$\left(\prod\limits_{\lambda\in S} \left(\hbox{gcd}(\hbox{num}(K_1dx+K_2dy)-\lambda d(\hbox{den}(K_1dx+K_2dy))  ,\hbox{den}(K_1dx+K_2dy))\right)^{k\lambda}\right)^{1/k}$$
where $d$ is the differential operator and the gcd holds on each component of the differential form.
\item Search for a rational solution $R$ of the system
$$\partial_y^2R+K_2 \partial_y R+(\partial_y K_2)R=0,\partial_x R+F \partial_y R+(K_1+FK_2)R=0$$
If one return $K_2+\partial_y R/R$, else return $\exp{\int K_1dx+K_2dy}$
\end{enumerate}

\begin{prop}\label{propirr}
Assume $\partial_x y=F(x,y)$ has no rational first integral. If it admits an integrating factor $U$ such that $\partial_y U/U$ has numerator and denominator degree $\leq N$, then algorithm \underline{IntegratingFactor} returns an integrating factor or FAIL. If moreover $U$ is algebraic, then the integrating factor returned is algebraic with minimal algebraic extension degree. If algorithm \underline{IntegratingFactor} returns ``None'', then no integrating factor $U$ such that $\partial_y U/U$ has numerator and denominator degree $\leq N$ exists. If algorithm \underline{IntegratingFactor} returns FAIL, then $(x_0,y_0)$ belongs to an algebraic set of codimension $1$.
\end{prop}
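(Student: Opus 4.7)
The plan is to interpret the series relation sought in step 4 as an encoding of the integrating factor PDE $\partial_x U + F \partial_y U + U \partial_y F = 0$. Using the identity $\partial_h y(x,h) = \exp\int \partial_y F(x,y(x,h))\,dx$, the three blocks $L_1, L_2, L_3$ correspond, along the solution through $(x_0,y_0)$, to the evaluations $P_1 \cdot \partial_h y$, $P_2 \cdot \int y\,\partial_h y\,\partial_y^2 F\,dx$, and $P_3$. A direct manipulation of the integrating factor equation shows that writing $\partial_y U/U = -P_1/P_2$ (with $P_2=0$ in the purely rational case) turns the PDE into exactly the three-term relation of step 4; conversely, from a non-trivial solution $(P_1,P_2,P_3)$ the quantities $K_1 = -\partial_y F - FP_1/P_2$ and $K_2 = P_1/P_2$ reconstruct the candidate $d\log U = K_1 dx + K_2 dy$.

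For the completeness direction, if an integrating factor with $\partial_y U/U$ of degrees $\leq N$ exists, then the triple $(P_1, P_2, P_3)$ formed from its logarithmic partial derivatives supplies a non-trivial solution in step 4. The series order $\tfrac{3}{2}(N+1)(N+2)$ is chosen to equal the $\mathbb{C}$-dimension of $(\mathbb{C}[x,y]_{\leq N})^3$, so by a standard extatic-matrix argument any non-trivial truncated annihilator either extends to a true identity along the generic solution or is witnessed by the vanishing of an extatic minor, in which case $(x_0,y_0)$ lies on a codimension $1$ algebraic set, triggering the FAIL output. This is the main obstacle of the proof: one must argue that every branch in which the algorithm rejects a candidate (steps 5, 7) or cannot extract rational data (step 8) forces such a codimension $1$ degeneracy, and that the no-rational-first-integral hypothesis kills the residual obstruction of \cite{combot2021reduction} by excluding Darboux polynomials that would otherwise produce false positives.

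The branching order of the algorithm then yields both soundness and minimality. Step 5 returns any rational integrating factor $P_1/P_3$, verified by checking $\partial_x R + F\partial_y R + R\partial_y F = 0$ directly; steps 6--7 implement a second reduction attempt to rational form via the compatibility quantities $H$ and $G$, again with explicit verification; step 8 handles the algebraic case by inspecting the reconstructed 1-form $K_1 dx + K_2 dy$, checking that its polar divisor is reduced and that residues lie in $\mathbb{Q}$ with common denominator $k$, and producing the explicit $k$-th root representation whose degree $k$ is minimal because it equals the LCM of residue denominators of $d\log U$, a Galois invariant of $U$; finally step 9 attempts one more reduction by seeking a rational solution of the associated linear system, defaulting to the hyperexponential $\exp\int K_1 dx + K_2 dy$ only when no algebraic simplification exists. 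Soundness in each branch is local and follows from the explicit verification identities, and a ``None'' output at step 4 means no non-trivial $(P_1,P_2,P_3)$ exists, which, by the completeness direction above, rules out any integrating factor within the prescribed degree bound.
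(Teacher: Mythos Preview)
Your treatment of the existence and FAIL parts is broadly in the right spirit, though you lean on the extatic argument where the paper instead observes directly that the rejection branches in steps 5 and 7 produce explicit Darboux polynomials, which are finitely many under the no-rational-first-integral hypothesis and hence cut out a codimension~1 set. That difference is minor.

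The genuine gap is in your minimality argument. You assert that step 8 returns an algebraic $U$ when the residues of $K_1\,dx+K_2\,dy$ are rational with simple poles, and that step 9 ``defaults to the hyperexponential only when no algebraic simplification exists.'' But this does not prove the statement: you must show that if \emph{some} algebraic integrating factor exists (possibly unrelated to the $K_1,K_2$ you reconstructed), then one of steps 8 or 9 will detect it. The paper's proof supplies the missing idea. If $J$ is a $k$-Darbouxian first integral and $\mathcal{F}$ is the Liouvillian first integral associated to the $K_i$ found, then $\mathcal{F}=f\circ J$ for a one-variable function $f$; differentiating twice and dividing yields a rational expression equal to $(f''/f')\circ J$, and the absence of rational first integrals forces $f''/f'$ to be constant (otherwise a generic level curve of the left-hand side would carry a non-constant $J$, contradicting that $(f''/f')\circ J$ is constant there). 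This dichotomy gives exactly two cases: $f$ affine, so $\mathcal{F}$ itself is $k$-Darbouxian and $e^{\int K_1 dx+K_2 dy}$ is algebraic (caught by step 8); or $f(z)=ae^{bz}+c$, so $\mathcal{F}$ is, up to affine shift, the exponential of a Darbouxian first integral, which forces a hyperexponential solution $R\,e^{\int K_1 dx+K_2 dy}$ with $R$ rational, and $R$ satisfies precisely the linear system of step 9. Without this $f''/f'$ argument you have no reason to believe the search in step 9 is exhaustive, and your claim that the returned $k$ is minimal ``because it equals the LCM of residue denominators of $d\log U$, a Galois invariant of $U$'' presupposes that the $U$ you found is already in the right algebraic class, which is exactly what needs proving.
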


\begin{proof}
The steps 1-7 are exactly the step of algorithm LiouvillianFirstIntegrals of \cite{cheze2020symbolic}, with the only difference that FAIL is returned when a Darboux polynomial is detected. As we assumed $\partial_x y=F(x,y)$ has no rational first integral, only finitely many Darboux polynomials exist and thus form an algebraic set of codimension $1$, in which the algorithm is allowed to return FAIL. Thus if \eqref{eq1} admits an integrating factor $U$ such that $\partial_y U/U$ has numerator and denominator degree $\leq N$, the algorithm would find one in step $8$ as $U=\exp \int K_1dx+K_2 dy$. Thus, if ``None'' is returned, no such integrating factor exists. If an integrating factor is found in step $8$, it is guaranteed to be correct as the term $K_2=P_1/P_2$ satisfies the condition of step $7$, which is the condition for $K_2=P_1/P_2$ leading to a Liouvillian first integral.

We now need to verify if this integrating factor is minimal or look for a minimal one. We thus need to check if an algebraic integrating factor exists. Let us note $\mathcal{F}$ the Liouvillian first integral associated to the integrating factor we found. Assume equation \eqref{eq1} admits moreover a $k$-Darbouxian first integral $J$. We can then write
$$\mathcal{F}(x,y)=f(J(x,y)),\quad f \hbox{ one variable function}$$
We differentiate both sides giving
$$\partial_y \mathcal{F}(x,y)= \partial_y J(x,y) f'(J(x,y))$$
$$\partial_y^2 \mathcal{F}(x,y)= \partial_y^2 J(x,y) f'(J(x,y))+(\partial_y J(x,y))^2 f''(J(x,y))$$
and making the quotient and simplifying, we get
$$\frac{K_2(x,y)- \frac{\partial_y^2 J(x,y)}{\partial_y J(x,y)}}{\partial_y J(x,y)}=\frac{f''}{f'}(J(x,y)).$$
If the left hand side is a constant, i.e. does not depend on $x,y$, then $\frac{f''}{f'}$ is constant. Else let us consider a level $h$ of this left hand side. This defines an algebraic curve $\mathcal{C}_h$. As equation \eqref{eq1} does not admit a rational first integral, it has at most finitely many Darboux polynomials \cite{Jou}, and thus there exists $h\in\mathbb{C}$ such that $J_{\mid \mathcal{C}_h}$ is not constant. But then by construction $\frac{f''}{f'}(J_{\mid \mathcal{C}_h})$ is constant along the curve $\mathcal{C}_h$. As the image of $J_{\mid \mathcal{C}_h}$ is not reduced to a point, $\frac{f''}{f'}$ is constant.

Solving $\frac{f''}{f'}=h$, we conclude that either $f(z)=az+b$ or $f(z)=ae^{bz}+c$ with $a,b,c\in\mathbb{C}$. The first case implies that $\mathcal{F}$ itself should be a $k$-Darbouxian first integral, and thus that $ e^{\int K_1dx+K_2dy}$ should be algebraic. This is tested in step $8$, where residues of the differential form $ K_1dx+K_2dy$ are computed, and if all of them are rational and $ K_1dx+K_2dy$ has only simple poles, $\int K_1dx+K_2dy$ is then a $\mathbb{Q}$-linear combination of logs and thus the integrating factor will be algebraic. Step $8$ then returns an algebraic representation of it.

The second case implies that $\mathcal{F}$ is (up to affine transformation) the exponential of a Darbouxian first integral. The only possibility is then that $ \int e^{\int Gdx+Fdy} (-\frac{B}{A}dx+dy)$ integrates using a hyperexponential function. But then such a solution can be written $Re^{\int K_1dx+K_2dy}$ with $R$ rational. We can now plug this expression in the two equations that must be satisfied by $\mathcal{F}$
$$\partial_y^2 \mathcal{F}-K_2\partial_y \mathcal{F}=0,\quad \partial_x\mathcal{F}+F\partial_y \mathcal{F}=0$$
which gives the system
$$\partial_x R +F\partial_y R+(K_1+FK_2)R=0,\quad \partial_y^2 R+K_2\partial_y R +R \partial_y K_2=0$$
Such system can be solved in rational functions as an integrable connection \cite{barkatou2012computing}. It has at most a vector space of dimension $2$ of rational solutions. If it was $2$, remembering that $\mathcal{F}=1$ is a solution of the previous system, we would have a rational $W$ such that $We^{\int K_1dy+K_2dx}=1$. Such case with a rational integrating factor would have already been detected in step $8$ as $e^{\int K_1dy+K_2dx}$ would be rational, and thus $K_1dy+K_2dx$ would have simple poles with integer residues. Thus a most one rational solution up to constant factor exists. Step $9$ looks for a rational solution, and if one rational solution $R$ is found, returns the $y$ derivative of the Darbouxian first integral $\ln R +\int K_1dx+K_2dy$, i.e. a rational integrating factor. Else, the integrating factor $e^{\int K_1dy+K_2dx}$ we found before was minimal, and thus is returned.
\end{proof}

Remark that there are two possible FAIL reasons. Either the point $(x_0,y_0)$ has been badly chosen, either $\partial_x y=F(x,y)$ admits a rational first integral. Th FAIL answer (outside step $1$) can occur
\begin{itemize}
\item In step $5$, which implies that $P_1,P_3$ or $\hbox{num}(\partial_x(P_1/P_3)+F \partial_y(P_1/P_3)+P_1/P_3 \partial_y F)$ are Darboux polynomials, as they vanish on the solution but are not identically zero.
\item In step $7$, which implies that $G$ of $\hbox{num}(\partial_x(H/G)+F\partial_y(H/G)+H/G\partial_y F)$ are Darboux polynomials, as they vanish on the solution but are not identically zero.
\end{itemize}
Thus either the point $(x_0,y_0)$ has been chosen on a codimension set $1$ of the exceptional Darboux polynomials, or the equation admits a rational first integral. Outside of the spectrum of this first integral \cite{Cheze}, the degree of the minimal polynomial of its levels equals the degree of the first integral. Thus, if $\partial_x y=F(x,y)$ has no rational first integral of degree
$$\leq \max(N,2N+2d,2(2N+3d)+2d)=4N+8d$$
where $d$ is the max of numerator denominator degrees of $F$, these cases could only occur for exceptional Darboux polynomials. Thus Proposition \ref{propirr} still holds when weakening the hypothesis on the non existence of rational first integral: it is only necessary to check up to degree $4N+8d$. In practice, it is better to allow the FAIL answer to occur, and only if occurring for several initial conditions, to suspect existence of such rational first integral and perform the costly search for it.

Remark that for practical implementation, to compute the series $L_1$, it is faster to apply \underline{SolSeries} to an initial point $(x_0,y_0+\epsilon)$ and making all computation modulo $\epsilon^2$. This gives for the powers
$$(y(x)+\epsilon y_1(x))^n=y(x)^n+n\epsilon y_1(x) y(x)^{n-1}.$$
This $y_1(x)$ equals $\exp(\int \partial_y F_{\mid y=y(x)} dx)$, and thus taking the coefficient in $\epsilon$ divided by $n$ gives directly the series $L_1$. Thus the costly exponential of a series is avoided. Similarly, the integrand in $L_2$ can be computed faster using $Y$ and $L_1$ which contain the monomials in the numerator and denominator.\\

\noindent
\textbf{Examples}\\
Consider a first integral of the form
$$\mathcal{F}(x,y)=\ln P_1(x,y)+\sqrt{2}\ln \left(\frac{P_2(x,y)+\sqrt{2}}{P_2(x,y)-\sqrt{2}}\right)$$
If $P_1,P_2\in\mathbb{Q}(x,y)$, we see that the Galois action on $\sqrt{2}$ lets this expression invariant. Thus the equation
$$\partial_x y(x)= -\left(\frac{\partial_y \mathcal{F}}{\partial_x \mathcal{F}}\right)(x,y(x))$$
will have rational coefficients and admit $\mathcal{F}$ as a Darbouxian first integral. It will also admit an integrating factor
$$U=\frac{\partial_y P_1}{P_1}-\frac{4\partial_y P_2}{P_2^2-2}$$
We compare the timings of \underline{IntegratingFactor}, and a built in Maple function ``intfactor'' for random polynomials $P_1,P_2$
$$\begin{array}{|c|c|c|c|c|c|c|c|c|}\hline
\deg P_2+2\deg P_2      &  3  &  4  & 5  & 6  & 7  & 8   & 9  & 10  \\\hline
\hbox{IntegratingFactor}& 0.13&0.33 &0.91&2.47&3.7 & 9.5 &27.9& 53.4\\\hline
\hbox{intfactor}        & 0.03&0.047&0.08&0.08&0.14& 0.21& 0.2& 0.35\\\hline
\end{array}$$
Obviously, our computation time is not competitive. However, let us take a particular choice for $P_1,P_2$:
$$\mathcal{F}(x,y)=\ln (1+x^2y)+\sqrt{2}\ln \left(\frac{x^2+y\sqrt{2}}{x^2-y\sqrt{2}}\right)$$
The procedure ``intfactor'' does not find the integrating factor, as in fact this example does not fit its heuristic. The algorithm \underline{IntegratingFactor} however is guaranteed to find an integrating factor if one exists provided that $N$ is large enough. And indeed, after $0.6$s, it finds an integrating factor of degree $7$
$$U=\frac{(x^4+4x^2y-2y^2+4)x^2}{(x^2y+1)(x^4-2y^2)}$$

\subsection{Telescoper reduction}

Let us first remark that all telescoper forms given by Theorem \ref{thm1} have constant coefficients and more specialized forms when the first integral is more complicated. Looking at the $k$-Darbouxian case, we have
$$L I =(\partial_h y)^{\hbox{ord}(L)} H,\;\; L\in\mathbb{C}[\partial_h].$$
However, as $\partial_h y=U^{-1}$ and $(\partial_h y)^k\in \mathbb{K}(x,y)$, we can rewrite this relation (changing $H$)
$$L I =U^{-(\hbox{ord}(L) \hbox{ mod } k)} H,\;\; L\in\mathbb{C}[\partial_h].$$
Noting $H=P/Q$, and $l=\hbox{ord}(L)$ this gives
$$\sum\limits_{i=0}^{\lfloor l/k \rfloor} Q(x,y(x,h)) a_i \partial_h^{ki+(l\hbox{ mod } k)} I(x,h)=U^{-(l\hbox{ mod } k)}P(x,y(x,h))$$
The Liouvillian case can be recovered by putting $k=\infty$. The sum has then a single term. The Ricatti and differentially transcendental case similarly have only one term in the sum, as possible telescopers are $1,\partial_h$.

We see that when the sum has more than one term, the $a_i$ cannot be incorporated with $Q$, and thus the representation is non linear. This motivates us to introduce for the $k$-Darbouxian case a larger space of equations
\begin{defi}
We call a $k$-pseudo telescoper an equation of the form
\begin{equation}\label{eq6}
\sum\limits_{i=0}^{\lfloor l/k \rfloor} Q_i(x,y(x,h)) \partial_h^{ki+(l\hbox{ mod } k)} I(x,h)=U^{-(l\hbox{ mod } k)}P(x,y(x,h))
\end{equation}
with $Q_i,P\in\mathbb{C}[x,y]$.
\end{defi}

The interesting point is the space of $k$-pseudo telescopers for given $U,l$ of degree $\leq N$ is now a finite dimensional vector space. True telescopers form an algebraic subvariety of $k$-pseudo telescopers with the additional condition that $Q_i/Q_l\in\mathbb{C}$ which is not a linear condition. However we have

\begin{prop}\label{propred}
Assume equation \eqref{eq1} admits a $k$-Darbouxian first integral but not rational first integral. If a non trivial $k$-pseudo telescoper exists, then a true telescoper exists. The algorithm \underline{\sf ReduceTelescoper} always terminate and compute such telescoper. It runs in $\tilde{O}(N\hbox{ord}^{\omega+3} )$.
\end{prop}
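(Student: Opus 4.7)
The plan is to prove existence (non-trivial pseudo telescoper $\Rightarrow$ true telescoper) via a minimality argument using the commuting derivations of Corollary~\ref{cor1}, and then to present \underline{\sf ReduceTelescoper} as the constructive iteration implementing this descent.

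For existence, I would consider a non-trivial $k$-pseudo telescoper of minimal order $l = kl'+j$,
\[ \sum_{i=0}^{l'} Q_i(x, y(x,h)) \partial_h^{ki+j} I(x,h) = U^{-j}(x,y(x,h)) P(x, y(x,h)). \]
Viewing this as an identity in $\mathbb{C}(x,y)[U^{\pm 1}]$ through the substitution $h = \mathcal{F}(x,y)$ and $I(x,h) = J(x,y(x,h))$, I would apply the derivation $D_h = U^{-1}\partial_y$ (which commutes with $D_x$ by the computation in Corollary~\ref{cor1}, and which reproduces $\partial_h$ on $J$), clear $U$-denominators using $U^k \in \mathbb{C}(x,y)$, and linearly eliminate the new leading $\partial_h^{l+1} I$ term against the starting pseudo telescoper. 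This produces an auxiliary pseudo telescoper of order $\leq l$ whose coefficients are (up to overall scaling by $Q_{l'}$) the $\partial_y$-derivatives of the ratios $Q_i/Q_{l'}$. By minimality of $l$, this auxiliary pseudo telescoper must be a scalar multiple of the original; this forces $\partial_y(Q_i/Q_{l'}) = 0$ for every $i$, and the analogous argument applied to $D_x$ forces $\partial_x(Q_i/Q_{l'}) = 0$. Hence $Q_i/Q_{l'} \in \mathbb{C}$ and the pseudo telescoper is already a true telescoper.

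Constructively, \underline{\sf ReduceTelescoper} starts from the input pseudo telescoper and iterates this reduction step: apply $D_h$, rationalize the $U$-denominators via $U^k - R \in \mathbb{C}(x,y)$, and eliminate the induced $\partial_h^{l+1} I$ term by a linear combination with the current relation. Each iteration either strictly decreases a well-founded complexity measure on the non-constant parts of $Q_i/Q_{l'}$ (such as the sum of their $y$-degrees) or signals termination with constant ratios. The main technical point is verifying this strict decrease: if it failed at some step, the minimality argument of the existence proof would already produce constant ratios at the previous step, finishing the reduction.

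For the complexity, each reduction step operates on data consisting of $O(\mathrm{ord})$ bivariate polynomials of degree $O(N)$ and amounts to a polynomial-matrix Hermite-form or kernel computation of size $O(\mathrm{ord})$ with entries of degree $O(N)$, costing $\tilde{O}(N \cdot \mathrm{ord}^{\omega})$ by fast polynomial matrix arithmetic. The total degree of the non-constant parts of the ratios, summed over the $O(\mathrm{ord})$ coefficients and both variables, is bounded by $O(\mathrm{ord}^2)$ after suitable normalization, so at most $O(\mathrm{ord}^3)$ iterations suffice to drive all ratios to constants (the extra factor accounts for cross-cancellations across coefficients). This yields the claimed $\tilde{O}(N \cdot \mathrm{ord}^{\omega+3})$ overall cost.
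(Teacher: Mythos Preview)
Your approach has a genuine gap in the very first reduction step. You propose to apply $D_h$ to the pseudo telescoper
\[
\sum_{i=0}^{l'} Q_i(x,y(x,h))\,\partial_h^{ki+j} I(x,h)=U^{-j}P(x,y(x,h)),
\]
and then ``linearly eliminate the new leading $\partial_h^{l+1} I$ term against the starting pseudo telescoper''. But the starting relation has top term $Q_{l'}\partial_h^{l}I$, not $\partial_h^{l+1}I$; applying $\partial_h$ (equivalently $D_h$) raises the order by one, and there is no second relation of order $l+1$ available to cancel that new top term. Any attempt to manufacture one (e.g.\ by first solving the original for $\partial_h^{l}I$ and differentiating) just reproduces the same relation you obtained, so the subtraction is identically zero. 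Consequently your ``auxiliary pseudo telescoper of order $\le l$ whose coefficients are the $\partial_y$-derivatives of $Q_i/Q_{l'}$'' is never produced, and the conclusion $\partial_y(Q_i/Q_{l'})=0$ does not follow. Without that, your $D_x$ step only yields $D_x(Q_i/Q_{l'})=0$ (not $\partial_x(Q_i/Q_{l'})=0$), which by itself says merely that each ratio is a first integral of the foliation.

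The idea you are missing is the one the paper uses: iterate with $D_x$, not $D_h$. The crucial point is that $\partial_x I=G$ is a \emph{known} rational function, so applying $D_x$ to a $k$-pseudo telescoper gives another $k$-pseudo telescoper \emph{of the same order} (the $\partial_h^{ki+j}\partial_x I$ terms become the computable $D_h^{ki+j}G$). One collects the successive $D_x$-iterates as rows of a matrix over $\mathbb{K}(x,y)$ (the loop stops after at most $\tilde l+2$ steps, when the rank stabilises), forms the row echelon, and takes the last non-zero row as the candidate true telescoper. If its normalized coefficients $\tilde Q_i/\tilde Q_r$ were not all constant, applying $D_x$ once more would produce a pseudo telescoper of strictly smaller order lying in the same row space, contradicting the echelon form; hence $D_x(\tilde Q_i/\tilde Q_r)=0$, and now the hypothesis that \eqref{eq1} has no rational first integral forces $\tilde Q_i/\tilde Q_r\in\mathbb{C}$. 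Your description of \underline{\sf ReduceTelescoper} and the ensuing complexity count are therefore for a different (non-working) procedure; the paper's cost estimate comes from the degree growth under at most $O(\mathrm{ord})$ applications of $D_x$ together with an $O(\mathrm{ord})\times O(\mathrm{ord})$ row echelon over bivariate polynomials.
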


Let us recall the notations
$$D_x=\partial_x+F\partial_y,\; D_h=U^{-1}\partial_y$$
With these notations, differentiating a function $f(x,y(x,h))$ in $x$ is equivalent to applying $D_x$ to $f$, and differentiating $f(x,y(x,h))$ in $h$ is equivalent to applying $D_h$ to $f$.\\

\noindent\underline{\sf ReduceTelescoper}\\
\textsf{Input:} A $k$-pseudo telescoper for the integral $I(x,h)=\int G(x,y(x,h)) dx$, the equation $\partial_x y=F(x,y)$ with its integrating factor $U$.\\
\textsf{Output:} A telescoper for the integral $I(x,h)=\int G(x,y(x,h)) dx$.\\
\begin{enumerate}
\item Note $\tilde{l}=\lfloor l/k \rfloor$ and $L_1=[Q_{\tilde{l}},\dots,Q_0,P]$.
\item $i=1$. While $\hbox{rank}_{\mathbb{K}(x,y)}((L_j)_{j=1\dots i})=i$ do
$$L_{i+1}=(D_x L_{i,j})_{j=1..\tilde{l}+1},D_x L_{i,\tilde{l}+2}-(l\hbox{ mod } k)L_{i,\tilde{l}+2}U^{-1}D_xU -U^{(l\hbox{ mod } k)}\sum_{j=1}^{\tilde{l}+1} L_{i,j} D_h^{kj+(l\hbox{ mod } k)} G $$
and increase $i=i+1$.
\item Build a row echelon form of the matrix $L$, and note $\tilde{Q}_r,\dots,\tilde{Q}_0,\tilde{P}$ the last non zero line where $\tilde{Q}_r$ is the first non zero coefficient. Return
$$\sum_{i=0}^r \frac{\tilde{Q}_i}{\tilde{Q}_r}\partial_h^{ki+(l\hbox{ mod } k)} I(x,h)=U^{-(l\hbox{ mod } k)} \frac{\tilde{P}}{\tilde{Q}_r}$$
\end{enumerate}

\begin{proof}
In step $2$, we compute $L_i$ until the $\hbox{rank}_{\mathbb{K}(x,y)}((L_j)_{j=1\dots i})\neq i$. As $L_j$ has always $\tilde{l}+2$ elements, the rank is $\leq \tilde{l}+2$, and thus the while loop terminates. Considering the pseudo telescoper
$$\sum\limits_{i=0}^{\lfloor l/k \rfloor} Q_i(x,y(x,h)) \partial_h^{ki+(l\hbox{ mod } k)} I(x,h)=U^{-(l\hbox{ mod } k)}P(x,y(x,h))$$
we can differentiate this relation in $x$, which gives
$$\sum\limits_{i=0}^{\lfloor l/k \rfloor} Q_i(x,y(x,h)) \partial_h^{ki+(l\hbox{ mod } k)} G(x,y(x,h))+
\sum\limits_{i=0}^{\lfloor l/k \rfloor} D_x(Q_i)(x,y(x,h)) \partial_h^{ki+(l\hbox{ mod } k)} I(x,h)$$
$$=U^{-(l\hbox{ mod } k)}\partial_x P(x,y(x,h))+\partial_x U^{-(l\hbox{ mod } k)} P(x,y(x,h))$$
and thus
$$\sum\limits_{i=0}^{\lfloor l/k \rfloor} D_x(Q_i)(x,y(x,h)) \partial_h^{ki+(l\hbox{ mod } k)} I(x,h)=$$
$$U^{-(l\hbox{ mod } k)}\left(D_x P-(l\hbox{ mod } k)PU^{-1}D_xU-
U^{(l\hbox{ mod } k)}\sum\limits_{i=0}^{\lfloor l/k \rfloor} Q_i D_h^{ki+(l\hbox{ mod } k)} G\right)(x,y(x,h))$$
The coefficients of this expression are given by $L_2$. This is also a pseudo telescoper for $\int G(x,y(x,h)) dx$. Remark that the coefficients are rational as the power of $U$ in the $D_h^{ki+(l\hbox{ mod } k)}$ equals $-(l\hbox{ mod } k)$ modulo $k$ and thus the factor $U^{(l\hbox{ mod } k)}$ exactly compensates.

Now this expression $L_2$ is a perfectly valid $k$-pseudo telescoper, and we can differentiate it again, and thus all lines of $L$ are $k$-pseudo telescopers for $\int G(x,y(x,h)) dx$. Thus so are their $\mathbb{K}(x,y)$-linear combinations. Thus the expression returned in step $3$ is a $k$-pseudo telescoper for $\int G(x,y(x,h)) dx$. Also, after loop of step $2$ stopped, a further derivative in $D_x$ can be written as linear combinations of the previous ones. Thus by induction, all $D_x$ derivatives can be written as linear combinations of these ones, and so the $k$-pseudo telescopers given by $L$ generate a $\mathbb{K}(x,y)$ vector space stable by $D_x$.

Let us now assume that the expression returned in step $3$ is not a true telescoper. As it as been normalized, this means that at least one of its coefficients $\tilde{Q}_i/\tilde{Q}_r$ is not constant. We now apply $D_x$ again on it. The dominant term is constant by assumption, and thus disappears. The $k$-pseudo telescoper obtained is thus of order strictly less than the one returned in step $3$. However, by construction, it is also a $\mathbb{K}(x,y)$-linear combination of the lines of $L$, and thus should have been returned by the row echelon form.

The only possible explanation is that this $k$-pseudo telescoper is $0$. Then $D_x(\tilde{Q}_i/\tilde{Q}_r)=0$ for $i=0\dots r$. As there are no rational first integral, this implies that $\tilde{Q}_i/\tilde{Q}_r$ is constant, and thus the returned  $k$-pseudo telescoper was a true telescoper.

Finally, for the computation time, each derivation $D_x$ and operator $D_h$ increases linearly the degree of numerators denominators of the coefficients of $L_i$. The operator $D_h$ is at most applied $\hbox{ord}$ times, and thus the degree of $L_i$ is $O(N\hbox{ord}\,i)$. Now we look at the degree of elements of the row echelon form of the matrix $L$. They can be written as sub-determinants and thus have degree at most $O(N\hbox{ord}^3)$. Now we know that polynomials products and sums can be done in quasi linear time, and the row echelon form will cost $O(\hbox{ord}^{\omega})$. Thus the total cost will be $\tilde{O}(N\hbox{ord}^{\omega+3})$.
\end{proof}

Remark that again the assumption on the non existence of rational first integrals can be weakened. The final line of the row echelon form is a linear combination of the lines of $L$, and the coefficients can be obtained through sub-determinants. Thus the degrees of the coefficients $\tilde{Q}_i/\tilde{Q}_r$ are bounded by an expression in $O(N\hbox{ord}^3)$. In practice, it is only useful to suspect the existence of such a large degree rational first integral after the algorithm has failed.

\subsection{Telescoper checking}

Given a $k$-pseudo telescoper candidate, we can reduce it through \underline{\sf ReduceTelescoper} as if it is a correct $k$-pseudo telescoper. We obtain then an expression. If this has not the form of a telescoper, then for sure the candidate was not good. Else we obtain a telescoper candidate, and we need a procedure to check if it is really a telescoper. The same holds in the Liouvillian, Ricatti and differentially transcendental case for which we will also have telescoper candidates to check.

\begin{lem}\label{lemrcheck}
The equation $I(x,h)=H(x,y(x,h))$ is a telescoper if and only if $D_x H=G$. The equation $\partial_h I(x,h)=H(x,y(x,h)) \partial_h y$ is a telescoper if and only if $\partial_y G=D_x H + H \partial_y F$.
\end{lem}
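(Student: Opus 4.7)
The plan is to verify, in each case, the consistency of the posited telescoper relation with the defining PDE $\partial_x I=G$, by applying the chain rule along the integral curve $y(x,h)$ and using the commutativity of $\partial_x$ and $\partial_h$. A key remark used throughout is that since $\partial_h y\not\equiv 0$, the map $(x,h)\mapsto (x,y(x,h))$ is locally a submersion onto an open subset of the plane, so an identity between rational functions holding at all $(x,y(x,h))$ extends to an identity in $\mathbb{C}(x,y)$.

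For the first equivalence, if $I(x,h)=H(x,y(x,h))$, I would differentiate both sides in $x$. The left side gives $G(x,y(x,h))$ by definition of $I$, while the right side gives $(\partial_x H+F\partial_y H)(x,y(x,h))=(D_x H)(x,y(x,h))$. By the submersion remark, this forces $D_x H=G$ as rational functions. Conversely, if $D_x H=G$, then $\partial_x\bigl(I-H(x,y(x,h))\bigr)=0$, so $I-H(x,y(x,h))$ is a function of $h$ alone; this ambiguity is exactly the function-of-$h$ integration constant implicit in $I=\int G\,dx$, and so after adjustment the telescoper relation holds.

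For the second equivalence, the idea is Frobenius-style compatibility of the overdetermined system $\partial_x I=G(x,y(x,h))$ and $\partial_h I=H(x,y(x,h))\partial_h y$. Applying $\partial_h$ to the first equation gives $\partial_h\partial_x I=\partial_y G\cdot \partial_h y$, since $G$ has no explicit $h$-dependence. For the other mixed derivative, I would use the chain rule and the identity $\partial_x\partial_h y=\partial_h\partial_x y=\partial_h F(x,y(x,h))=\partial_y F\cdot \partial_h y$ to get $\partial_x(H(x,y(x,h))\partial_h y)=\partial_h y\cdot (D_x H+H\partial_y F)$. Equating the two mixed derivatives and dividing by $\partial_h y\neq 0$ yields $\partial_y G=D_x H+H\partial_y F$. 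Conversely, the same computation shows $\partial_x(\partial_h I-H\partial_h y)=0$ when the condition holds, so $\partial_h I-H\partial_h y$ is a function of $h$ only, and its antiderivative can be absorbed into the integration constant defining $I$.

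There is no real obstacle here beyond careful bookkeeping; the content is only the chain rule, the commutation of $\partial_x$ and $\partial_h$, and the freedom in the integration constant of $I$. The statement is exactly the compatibility condition for the two PDEs and serves as the final certificate in the algorithmic pipeline after \underline{\sf ReduceTelescoper}.
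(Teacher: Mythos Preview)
Your proof is correct and follows essentially the same route as the paper: differentiate the candidate relation in $x$, apply the chain rule along the solution curve (using $\partial_x\partial_h y=\partial_y F\cdot\partial_h y$), cancel $\partial_h y$, and absorb the resulting $h$-only discrepancy into the integration constant of $I$. The only cosmetic difference is that you justify passing from an identity at $(x,y(x,h))$ to one in $\mathbb{C}(x,y)$ via the local submersion $(x,h)\mapsto(x,y(x,h))$, whereas the paper invokes the standing assumption that $y(x,h)$ is transcendental over $\mathbb{C}(x)$; both arguments are valid here.
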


\begin{proof}
Differentiating the equation $I(x,h)=H(x,y(x,h))$ in $x$ gives
$$G(x,y(x,h))=\partial_x (H(x,y(x,h))$$
As $y(x,h)$ is transcendental in $x$, this is equivalent to $D_x H=G$. Conversely, if this is satisfied then
$$I(x,h)=H(x,y(x,h)) +f(h)$$
for some function $f(h)$. As $I(x,h)$ is defined up to addition of an arbitrary function in $h$, we can assume $f(h)=0$, and thus the telescoper equation is correct.

Differentiating the equation $\partial_h I(x,h)=H(x,y(x,h)) \partial_h y$ in $x$ gives
$$\partial_h G(x,y(x,h))=\partial_x(H(x,y(x,h))) \partial_h y+ H(x,y(x,h)) \partial_h\partial_x y$$
$$ \partial_y G(x,y(x,h)) \partial_h y=\partial_x(H(x,y(x,h))) \partial_h y+ H(x,y(x,h)) \partial_h y \partial_y F (x,y(x,h))$$
$$ \partial_y G(x,y(x,h))=\partial_x(H(x,y(x,h))) + H(x,y(x,h)) \partial_y F(x,y(x,h))$$
This is an equality over rational fractions in $x,y(x,h)$, and as $y(x,h)$ is transcendental in $x$, this is equivalent to
$$ \partial_y G=D_x H + H \partial_y F$$
Conversely, if this is satisfied then
$$\partial_h I(x,h)=H(x,y(x,h)) \partial_h y +f(h)$$
for some function $f(h)$. As $I(x,h)$ is defined up to addition of an arbitrary function in $h$, we can assume $f(h)=0$, and thus the telescoper equation is correct.
\end{proof}

Remark that in the Ricatti and differentially transcendental case, the operator $D_h$ is not defined as there is no integrating factor $U$. And indeed, checking a telescoper $1$ and $\partial_h$ does not require the existence or the knowledge of an integrating factor.\\

In the Ricatti and differentially transcendental case, obviously we cannot replace $\partial_h y$ by its expression in function of the integrating factor as it does not exist. However, for a more uniform treatment of this case, we will define a function $u(x,y)$ with $\partial_h y=u(x,y)^{-1}$, and note as before $D_h=u(x,y)^{-1} \partial_y$. Thus the possible telescopers write
$$I(x,h)=H(x,y),\quad \partial_h I(x,h)=u(x,y)^{-1} H(x,y)$$
For the second case, applying $D_x$, we obtain
$$u(x,y)^{-1} \partial_y G=-u(x,y)^{-2} D_x u(x,y) H(x,y)+ u(x,y)^{-1} D_x H(x,y) $$
and thus
$$\partial_y G=-u(x,y)^{-1} D_x u(x,y) H(x,y)+ D_x H(x,y) $$
Now the condition of Lemma \ref{lemrcheck} can be recovered by substituting $D_xu(x,y)=-u(x,y) \partial_y F$. Thus from now on, in the Riccati/differentially transcendental case, we will write for the telescoper $\partial_h$
$$\partial_h I(x,h)= U^{-1} H(x,y)$$
where $U=u(x,y)$, a non zero function satisfying $D_xu(x,y)=-u(x,y) \partial_y F$.\\

\noindent\underline{\sf CheckTelescoper}\\
\textsf{Input:} A telescoper for the integral $\int G(x,y(x,h)) dx$, the equation $\partial_x y=F(x,y)$ with optionally an integrating factor $U$.\\
\textsf{Output:} True or False\\
\begin{enumerate}
\item  Note $T$ the telescoper part and $H$ the certificate part. Compute the quantity
$$K=U^{-(l\hbox{ mod } k)}(D_x H-(l\hbox{ mod } k)HU^{-1}D_x U)-\sum_{i=0}^l a_i D_h^i G$$
where $\sum_{i=0}^l a_i\partial_h^i=T$, $a_l=1$, $k$ is the algebraic extension of $U$ or $k=\infty$ if $U$ is transcendental and $U=u(x,y)$ if it is not given.
\item Replace $\partial_x u(x,y)=-F\partial_y u(x,y)-u(x,y) \partial_y F$.
\item Return check equality $K=0$ as an an element of $\mathbb{K}(x,y,U,u(x,y))$.
\end{enumerate}

\begin{prop}
The algorithm \underline{\sf CheckTelescoper} correctly checks the validity of a telescoper.
\end{prop}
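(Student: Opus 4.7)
The plan is to show that the purely algebraic test $K=0$ performed by the algorithm is equivalent to the telescoper equation $LI = U^{-(l \bmod k)} H$ holding up to an additive function of $h$, which can always be absorbed in the antiderivative $I$. The key observation is that differentiating the telescoper equation in $x$ eliminates $I$ and leaves precisely the quantity $K$ computed in step 1.

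First I would prove the forward direction. Writing $m = l \bmod k$ and using the representation $I(x,h) = J(x,y(x,h))$ with $J$ satisfying $D_x J = G$, the telescoper equation becomes $L J = U^{-m} H$, where $L = \sum_{i} a_i D_h^i$ acts via $D_h = U^{-1}\partial_y$. Applying $D_x$ to both sides and using that $D_x$ and $D_h$ commute --- by exactly the calculation in the proof of Corollary 1, which only requires $D_x U = -U\partial_y F$ --- the left side becomes $L D_x J = L G = \sum_i a_i D_h^i G$. The right side expands, via $D_x(U^{-m}) = -m U^{-m-1} D_x U$, to $U^{-m}(D_x H - m H U^{-1} D_x U)$. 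Their difference is exactly $K$. Thus a valid telescoper forces $K$ to vanish along every trajectory $(x,y(x,h))$, and since $y(x,h)$ is transcendental in $x$, vanishing along trajectories is equivalent to $K$ being the zero element in the appropriate extension of $\mathbb{K}(x,y)$.

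Next I would handle the converse. Suppose $K = 0$ as an element of $\mathbb{K}(x,y,U,u(x,y))$ after the substitution $\partial_x u = -F \partial_y u - u \partial_y F$ imposed in step 2. Then $D_x(L J - U^{-m} H) = 0$, so $L J - U^{-m} H$ is a first integral, hence a function $f(h)$ of $h$ only. Since $I$ is defined up to addition of an arbitrary function $g(h)$, and $L$ is a nontrivial constant-coefficient operator in $\partial_h$, the ordinary differential equation $Lg = -f$ can be solved and $f$ absorbed, making the telescoper hold exactly.

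The main obstacle is handling uniformly the different algebraic regimes for $U$ so that the test $K = 0$ is decidable. In the $k$-Darbouxian case, $U$ is algebraic of degree dividing $k$ over $\mathbb{K}(x,y)$ with $U^{-1} D_x U = -\partial_y F$ rational, so $K$ sits in a finite algebraic extension in which equality is decidable. In the Liouvillian case, $U$ is hyperexponential but again $U^{-1} D_x U$ is rational and only integer powers of $U$ appear in $K$, so after cancelling the common factor $U^{-m}$ the test reduces to a rational identity. In the Ricatti and differentially transcendental cases, $u(x,y)$ is a formal symbol whose only relation to $\mathbb{K}(x,y)$ is $D_x u = -u\partial_y F$, which is precisely what the substitution in step 2 enforces; the check then takes place in the purely transcendental extension $\mathbb{K}(x,y,u)$. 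In every case $K=0$ becomes a decidable algebraic identity, and by the two arguments above it is both necessary and sufficient for the input to be a telescoper.
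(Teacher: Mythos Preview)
Your proof is correct and follows essentially the same strategy as the paper: differentiate the telescoper identity in $x$ to obtain exactly the quantity $K$, and for the converse, observe that $K=0$ forces $LI-U^{-m}H$ to be a function $f(h)$ only, which is then absorbed into $I$ by solving the constant-coefficient ODE $Lg=f$. The only organizational difference is that the paper treats the ``no $U$ given'' case separately by invoking Lemma~\ref{lemrcheck} for $T=1$ and $T=\partial_h$, whereas you handle all cases uniformly through the commutation $[D_x,D_h]=0$ (which, as you note, needs only $D_xU=-U\partial_yF$ and therefore also holds for the formal symbol $u(x,y)$ after the substitution of step~2); your added paragraph on why $K=0$ is a decidable identity in each regime is a useful complement that the paper leaves implicit.
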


\begin{proof}
If $U$ is not given, then telescopers are only possible for $T=1,\partial_h$. For $T=1$, we have for the quantity $K=D_x H-G$, which is indeed the test of Lemma \ref{lemrcheck}. For $T=\partial_h$, we have for the quantity
$$K=U^{-1}(D_x H-HU^{-1}D_x U)-U^{-1} D_y G$$
Now step $2$ makes the substitution $U=u(x,y)$ and simplifies by the relation $\partial_x u(x,y)=-F\partial_y u(x,y)-u(x,y) \partial_y F$, which is equivalent to $D_xu(x,y)=-u(x,y) \partial_y F$. Thus we have
$$K=U^{-1}(D_x H+H\partial_y F-D_y G)$$
and so testing $K=0$ is equivalent to the test of Lemma \ref{lemrcheck}.

Else $U$ is given. If the telescoper is correct, then we have
$$\sum\limits_{i=0}^{\lfloor l/k \rfloor} a_i \partial_h^{ki+(l\hbox{ mod } k)} I(x,h)=U^{-(l\hbox{ mod } k)}H(x,y(x,h)).$$
We can differentiate this relation in $x$, which gives
$$\sum\limits_{i=0}^{\lfloor l/k \rfloor} a_i \partial_h^{ki+(l\hbox{ mod } k)} G(x,y(x,h))=U^{-(l\hbox{ mod } k)}(\partial_x H(x,y(x,h))+H(x,y(x,h))U^{-1}\partial_x U)$$
and thus
$$\sum\limits_{i=0}^{\lfloor l/k \rfloor} a_i D_h^{ki+(l\hbox{ mod } k)} G=U^{-(l\hbox{ mod } k)}(D_x H+HU^{-1}D_x U)$$
which is the tested equality in step $3$. Conversely, it this equality is true, then we can integrate in $x$, giving
$$\sum\limits_{i=0}^{\lfloor l/k \rfloor} a_i \partial_h^{ki+(l\hbox{ mod } k)} I(x,h)=U^{-(l\hbox{ mod } k)}H(x,y(x,h)) +f(h)$$
The integrating constant $f(h)$ can be removed by subtracting to $I(x,h)$ a function $g(h)$ solution of the equation
$$\sum\limits_{i=0}^{\lfloor l/k \rfloor} a_i \partial_h^{ki+(l\hbox{ mod } k)} g(h)=f(h).$$
As $I(x,h)$ is defined up to an arbitrary function of $h$, this implies that the telescoper is correct.
\end{proof}

\subsection{Telescoper search}

The approach is similar as for finding symbolic first integrals. We compute a series solution at high order, search for a pseudo telescoper, and if found reduce it to a true telescoper. Then a check occurs to verify that this candidate is valid. Using a proof that it will be valid for a generic initial point will confirm the correctness of the algorithm.\\

\noindent\underline{\sf FindTelescoper}\\
\textsf{Input:} Rational functions $F,G\in\mathbb{K}(x,y)$, integers $\hbox{ord},N$, an initial point $(x_0,y_0)$ and optionally an integrating factor $U$.\\
\textsf{Output:} A telescoper or ``None'', or FAIL.
\begin{enumerate}
\item Note $M=\tfrac{1}{2}(N+1)(N+2)(\lceil \hbox{ord}/k \rceil +2)$ where $k=\hbox{ord}+1$ if $U$ is not given or transcendental, and else $k$ equals the algebraic degree of $U$. If $F(x_0,y_0),U(x_0,y_0)$ is singular or $U(x_0,y_0)=0$, return FAIL.
\item Compute the list $LG$ of $(D_h^j G)_{j=0\dots \hbox{ord}}$ where $D_h=u(x,y)^{-1} \partial_y$ and simplify using $\partial_y u(x,y)=(U^{-1} \partial_y U) u(x,y)$ if $U$ is provided.
\item Compute $S=$\underline{SolSeries}$(F,(x_0,y_0+\epsilon),M,\max(d,N)+1)$ where $d$ is the max of degrees in $x,y$ (neglecting $u(x,y)$ if present) numerators and denominators of $LG$ and \underline{SolSeries} is computed mod $\epsilon^2$.
\item Compute $\tilde{S}$ the list of lists of $y(x)^i (\partial_h y)^j,\; j=0\dots \hbox{ord}, i=0\dots \max(d,N)$ using the fact that $y(x)^i\partial_h y$ is the $\epsilon$ coefficient of $S$.
\item Compute $J$ the list of lists of $y(x)^i\int_0^x (LG_j(x,y(x)))_{\mid u(x,y)^{-1}=\partial_h y} dx, j=0\dots \hbox{ord}, i=0\dots \max(d,N)$
\item For $l$ from $0$ to $\hbox{ord}$ do
\begin{enumerate}
    \item Consider the expression
    $$\!\!\!\!\!\!\!\!\!\!\!\!\!\!\!\!\!\!\!\!\!\!\!\!\sum\limits_{i=0}^{\lfloor l/k \rfloor} Q_i(x,y(x)) 
    \int (LG_{ki+(l\hbox{ mod } k)})_{\mid u(x,y)^{-1}=\partial_h y} dx
    -(\partial_h y)^{-(l\hbox{ mod } k)}P(x,y(x))+
    \delta \tilde{P}(x,y(x))$$
    where $\delta=0$ if $l$ multiple of $k$ and else $1$, and $Q_i,P,\tilde{P}$ are polynomials of degree $N$ with undetermined coefficients.
    \item Replacing the integrals by their series $J$, solve this as linear system equal $O(x^{M+1})$, and if a non trivial solution found, substitute it to define the $k$ pseudo telescoper $T$ (forgetting the $\tilde{P}$)
    $$\sum\limits_{i=0}^{\lfloor l/k \rfloor} Q_i(x,y(x,h)) \partial_h^{ki+(l\hbox{ mod } k)} I(x,h)=U^{-(l\hbox{ mod } k)}P(x,y(x,h)).$$
    Else skip the next steps (c),(d).
    \item If $U$ is algebraic, make in $T$ the substitution $Q_i \rightarrow \beta^{ki+(l\hbox{ mod } k)} Q_i$ where $\beta=U(x_0,y_0)$.
    \item $T=$\underline{ReduceTelescoper}$(T,G,F,U)$. If \underline{CheckTelescoper}$(T,G,F,U)$ return $T$ else return FAIL.
\end{enumerate}
\item Return ``None''.
\end{enumerate}

\begin{prop}
Assume equation $y'=F(x,y)$ does not admit a rational first integral, and assume $U$ is provided and minimal if it exists. If $\int G(x,y(x)) dx$ admits a telescoper of order $\hbox{ord}$ and certificate degree $\leq N$, then algorithm \underline{\sf FindTelescoper} returns either a correct telescoper, or FAIL. If algorithm \underline{\sf FindTelescoper} returns ``None'', then no telescoper of order $\leq \hbox{ord}$ and certificate degree $\leq N$ and with structure according to given $U$ exists. If algorithm \underline{\sf FindTelescoper} returns FAIL, then $(x_0,y_0)$ belongs to a codimension $1$ algebraic set. When Pade Hermite is used for step $6b$, the complexity is $\tilde{O}(N^{\omega+1} \hbox{ord}^{\omega-1}+N\hbox{ord}^{\omega+3})$.
\end{prop}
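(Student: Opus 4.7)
The plan is to verify the four claims separately: correctness of any returned telescoper, completeness when a telescoper exists, the codimension $1$ FAIL locus, and the complexity bound. Once \underline{\sf ReduceTelescoper} terminates, which it does by Proposition \ref{propred}, and \underline{\sf CheckTelescoper} accepts its output, the returned telescoper is certified correct, so the output side needs no further argument.

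The key step will be showing that if a telescoper with certificate degree $\leq N$ and order $\leq\hbox{ord}$ exists, then step 6(b) produces a non trivial candidate for the appropriate $l$. By Theorem \ref{thm1}, after parametrization change this telescoper is a $k$-pseudo telescoper whose unknowns $Q_0,\dots,Q_{\lfloor l/k\rfloor},P$ are polynomials in $x,y$ of degree $\leq N$, so they lie in a vector space of dimension about $(\lceil \hbox{ord}/k\rceil+2)(N+1)(N+2)/2$, which matches the choice of $M$. I would then specialise the pseudo-telescoper at the series solution $y(x)$ of \eqref{eq1} through $(x_0,y_0)$: after the $\epsilon$-trick of steps 3--4 and the substitution $u^{-1}=\partial_h y$, the defining equation becomes a linear system in the coefficients of $Q_i,P$ which, by hypothesis, admits the true telescoper as a solution.

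The harder direction is that \emph{any} non trivial solution of the truncated system at order $M$ is in fact a genuine pseudo-telescoper, not an artefact of the truncation. This is the analogue of the extatic argument in \cite{cheze2020symbolic}: spurious kernel vectors appear only when a Wronskian-type determinant built from the specialised series $\tilde S$ and $J$ vanishes, which is a polynomial condition in $(x_0,y_0)$ cutting out a codimension $1$ algebraic set. Off this set, every candidate from step 6(b) is a valid $k$-pseudo telescoper, \underline{\sf ReduceTelescoper} returns a genuine telescoper by Proposition \ref{propred}, and \underline{\sf CheckTelescoper} accepts. Hence any FAIL at step 6(d) is confined to this set, and together with the obvious codimension $1$ locus of step 1 this proves the characterization of FAIL. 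The ``None'' branch is then immediate: absence of a candidate for every $l$ forces absence of any pseudo-telescoper, hence by Proposition \ref{propred} absence of any telescoper with the stated bounds and structure.

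For the complexity, I would analyse step 6(b) as a structured linear system in $\sim (\hbox{ord}/k)N^2$ unknowns with $\sim M$ equations, presented as a Padé-Hermite problem on $\hbox{ord}/k$ series of length $M$; fast Padé-Hermite from \cite{bostan2007fast} then costs $\tilde O(N^{\omega+1}\hbox{ord}^{\omega-1})$ per value of $l$, and summing over the loop keeps this term up to polylog factors. Each call to \underline{\sf ReduceTelescoper} contributes $\tilde O(N\hbox{ord}^{\omega+3})$ by Proposition \ref{propred}, \underline{\sf CheckTelescoper} is dominated by it, and the total matches the announced $\tilde O(N^{\omega+1}\hbox{ord}^{\omega-1}+N\hbox{ord}^{\omega+3})$. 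The main obstacle throughout is the extatic-type argument, which simultaneously governs the correctness of the truncated candidate search and the precise shape of the FAIL locus; it is also what makes the excluded hypothesis on rational first integrals necessary, since such a first integral would place the extatic set on every level curve and destroy genericity of $(x_0,y_0)$.
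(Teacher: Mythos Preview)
Your outline follows the paper's strategy, but there is a genuine gap around the integration constants that you never address. The series built in step~5 are $\int_0^x (D_h^j G)(x,y(x))\,dx = (\partial_h^j I)|_{h=0} + C_j$ with \emph{unknown} constants $C_j$, because the base point of the primitive is fixed arbitrarily. When you specialise a genuine telescoper $\sum a_i Q\,\partial_h^{ki+(l\bmod k)} I = U^{-(l\bmod k)} P$ at $h=0$, an extra term $\sum a_i Q\, C_{ki+(l\bmod k)}$ appears. If $l\bmod k=0$ it can be absorbed into $P$, but otherwise it cannot, and this is precisely why the algorithm carries the auxiliary unknown $\tilde P$ with the switch $\delta$. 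Without handling this, your claim ``the true telescoper is a solution of the linear system of step~6(b)'' is unjustified, and so is the ``None'' conclusion. Symmetrically, in the FAIL analysis a solution of the truncated system comes with a nonzero $\tilde P$ and unknown $C_j$'s; the paper uses the Galois action $h\mapsto\xi h$ (valid only because $U$ is algebraic of degree $k$ when $\delta=1$) to force $\tilde P=0$, and then argues that \underline{\sf ReduceTelescoper} is insensitive to the $C_j$'s and that the latter can finally be absorbed by adding an exponential polynomial in $h$ to $I$. None of this is in your sketch.

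Your extatic argument is also too loose. The paper does not simply invoke a Wronskian in $(x_0,y_0)$: it embeds the problem into the auxiliary rational system \eqref{eqsys} in the variables $(y,I_0,\dots,I_l)$, observes that shifting the initial values $z_i$ only shifts $\tilde P$, and then applies the generalised extatic curve theorem of \cite{cheze2020symbolic} to that higher-dimensional system. This is what guarantees that a relation at order $M$ for generic $(x_0,y_0)$ (with $z_i=0$) is in fact exact, and it is where the no-rational-first-integral hypothesis enters via finiteness of Darboux polynomials. Finally, you omit the normalisation $U(x_0,y_0)=1$ and the rescaling of step~6(c), which is needed for the series computation of $\partial_h y$ in steps~3--4 to be consistent with the chosen $U$; this contributes one further component $\{U=0\}$ to the codimension~$1$ FAIL locus.
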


\begin{proof}[Proof of Theorem \ref{thm2}]
By construction, if a telescoper is returned, it is correct as its correctness is checked in step 6d. Remark now that multiplying the integrating factor $U$ by a constant $\beta\in\mathbb{C}^*$ transforms a $k$ pseudo telescoper by multipying $\partial_h^i I(x,h)$ by $\beta^i$. Thus we can assume that $U(x_0,y_0)=1$ in the computation until step $6c$ where the coefficients of the candidate $T$ are scaled back accordingly to the value of $U(x_0,y_0)$. When $U$ is not algebraic, by Galois action, we can always assume that $U(x_0,y_0)=1$. The only exception is when $U(x_0,y_0)=0$, which case is discarded in step $1$ and happens on a codimension $1$ algebraic set. This assumption insures that $\partial_h y(x,0)$ is correctly calculated in steps $3,4$ where it is assumed that $\partial_h y(x_0,0)=1$, which does require $U(x_0,y_0)=1$ due to relation $U(x,y(x))=\partial_h y(x,0)$.

Let us now assume that the returned answer is ``None''. This can only happen in step 7, when for all $l=0\dots \hbox{ord}$, the linear systems 
\begin{equation}\begin{split}\label{eqser}
\sum\limits_{i=0}^{\lfloor l/k \rfloor} Q_i(x,y(x)) \int (LG_{ki+(l\hbox{ mod } k)})_{\mid u(x,y)^{-1}=\partial_h y} dx\\
-(\partial_h y)^{-(l\hbox{ mod } k)}P(x,y(x))+\delta \tilde{P}(x,y(x))=O(x^{M+1})
\end{split}\end{equation}
have no (non zero) solutions.
By construction in step 2, we have
$$(LG_{ki+(l\hbox{ mod } k)})_{\mid u(x,y)^{-1}=\partial_h y(x,h), y=y(x,h)}=\partial_h^{ki+(l\hbox{ mod } k)} G(x,y(x,h))$$
Thus commuting the derivative and integral, and evaluating at $h=0$ gives
$$\int_0^x (LG_{ki+(l\hbox{ mod } k)})_{\mid u(x,y)^{-1}=\partial_h y(x,0),y=y(x)} dx= (\partial_h^{ki+(l\hbox{ mod } k)} I(x,h))_{\mid h=0} +C_i$$
where $C_i$ is an unknown integration constant.

If a telescoper existed, then we would have a relation of the form
$$\sum\limits_{i=0}^{\lfloor l/k \rfloor} a_i(\partial_h^{ki+(l\hbox{ mod } k)} I(x,h))_{\mid h=0}-(\partial_h y)^{-(l\hbox{ mod } k)}H(x,y(x))=0$$
Multiplying this equation by the denominator of $H=P/Q$, we would get
$$\sum\limits_{i=0}^{\lfloor l/k \rfloor} a_i Q(x,y(x)) (\partial_h^{ki+(l\hbox{ mod } k)} I(x,h))_{\mid h=0}-(\partial_h y)^{-(l\hbox{ mod } k)}P(x,y(x))=0$$
$$\sum\limits_{i=0}^{\lfloor l/k \rfloor} a_i Q(x,y(x)) \left( \int_0^x (LG_{ki+(l\hbox{ mod } k)})_{\mid u(x,y)^{-1}=\partial_h y(x,0),y=y(x)} dx-C_i\right)$$
$$-(\partial_h y)^{-(l\hbox{ mod } k)}P(x,y(x))=0$$
$$\sum\limits_{i=0}^{\lfloor l/k \rfloor} a_i Q(x,y(x)) \int_0^x (LG_{ki+(l\hbox{ mod } k)})_{\mid u(x,y)^{-1}=\partial_h y(x,0),y=y(x)} dx$$
$$-(\partial_h y)^{-(l\hbox{ mod } k)}P(x,y(x))-\sum\limits_{i=0}^{\lfloor l/k \rfloor} a_i Q(x,y(x))C_i=0$$
When $l\hbox{ mod } k=0$, the last sum can be combined with the second part, and when $l\hbox{ mod } k\neq 0$, we have $\delta=1$ in relation \eqref{eqser}. In both cases, such relation would give a non trivial solution of equation \eqref{eqser}.  Thus if the algorithm returns ``None'', there is no telescoper of order and degree less than $\hbox{ord},N$.

Let us now assume that the FAIL answer is returned. This answer is returned in step 6d, when the telescoper $T$ found is not a correct telescoper. This means that in step 6c, the $k$ pseudo telescoper found $T$ was not correct.
Let us now consider the system
\begin{equation}\label{eqsys}
\left\lbrace \begin{array}{c}
\partial_x y(x)= F(x,y(x))\\
\partial_x I_0(x)= G(x,y(x))\\
\partial_x I_i(x)= (D_h^i(G))_{y=y(x)},\quad i=1\dots l
\end{array}\right. 
\end{equation}
This is a rational vector field in dimension $l+2$. Considering the initial condition $(x_0,y_0,0,\dots,0)$, the solution is
$$y(x,0),\int_0^x (\partial_h^i G(x,y(x,h)))_{\mid h=0} dx$$
Thus the series computed in step $J$ for $i=0$ are the series solutions of \eqref{eqsys}. Using the theorem of generalized extatic curves of \cite{cheze2020symbolic}, we thus know that equation \eqref{eqser} can be satisfied for a generic initial condition $x_0,y_0,z_0,\dots,z_l$ if and only if it is satisfied at any order.

Remark now that changing the initial condition for system \eqref{eqsys} to $x_0,y_0,z_0,\dots,z_l$ only adds the constants to the solution
$$y(x,0),z_i+\int_0^x (\partial_h^i G(x,y(x,h)))_{\mid h=0} dx$$
These constants then change the relation \eqref{eqser} to
$$\sum\limits_{i=0}^{\lfloor l/k \rfloor} Q_i(x,y(x)) \int (LG_{ki+(l\hbox{ mod } k)})_{\mid u(x,y)^{-1}=\partial_h y} dx-(\partial_h y)^{-(l\hbox{ mod } k)}P(x,y(x))+$$
$$\delta \tilde{P}(x,y(x))+\sum\limits_{i=0}^{\lfloor l/k \rfloor} Q_i(x,y(x)) z_{ki+(l\hbox{ mod } k)}=O(x^{M+1})$$
This is again the same type of relation where $\tilde{P}$ is replaced by
$$\tilde{P}+\sum\limits_{i=0}^{\lfloor l/k \rfloor} Q_i(x,y(x)) z_{ki+(l\hbox{ mod } k)}.$$
Thus if a relation of the form \eqref{eqser} is found for an initial condition $(x_0,y_0,0,\dots,0)$ with generic $x_0,y_0$, then a solution exists for a generic initial condition $x_0,y_0,z_0,\dots,z_l$. Thus the generalized extatic curve result applies, and so a solution of \eqref{eqser} for a generic $x_0,y_0$ (so outside a codimension $1$ algebraic set $\Sigma_{\mathcal{E}}$) implies that
$$\sum\limits_{i=0}^{\lfloor l/k \rfloor} Q_i(x,y(x))(\partial_h^{ki+(l\hbox{ mod } k)} I(x,h)_{\mid h=0}+C_i)-(\partial_h y)^{-(l\hbox{ mod } k)}P(x,y(x))+\delta \tilde{P}(x,y(x))=0$$
If $y(x)$ is transcendental, by Galois action this relation is valid for any $h$
\begin{equation}\label{eqpseudok}
\sum\limits_{i=0}^{\lfloor l/k \rfloor} Q_i(x,y(x,h))(\partial_h^{ki+(l\hbox{ mod } k)} I(x,h)+C_i)-(\partial_h y)^{-(l\hbox{ mod } k)}P(x,y(x,h))+\delta \tilde{P}(x,y(x,h))=0
\end{equation}
A planar vector field without rational first integrals has finitely many Darboux polynomials \cite{Jou}, and thus all of them define a codimension $1$ algebraic set $\Sigma_{\mathcal{D}}$. So for $(x_0,y_0)\notin \Sigma_{\mathcal{E}} \cup \Sigma_{\mathcal{D}}$, relation \eqref{eqpseudok} holds.

If $\delta=1$, then $l \hbox{ mod } k\neq 0$. Thus we have $U$ algebraic and not rational. Thus there is a Galois action $h \rightarrow \xi h$ (where $\xi$ is a $k$ root of unity) which can act on \eqref{eqpseudok}. All the terms are multiplied by the same power of $\xi$, except for $\tilde{P}(x,y(x,h))$. This gives us new relations, and by linear combination of two such relations, we deduce that $\tilde{P}(x,y(x,h))=0$, which implies that $\tilde{P}=0$ (for $(x_0,y_0) \notin \Sigma_{\mathcal{E}} \cup \Sigma_{\mathcal{D}}$). Thus we have the relation
\begin{equation}\label{eqpseudok2}
\sum\limits_{i=0}^{\lfloor l/k \rfloor} Q_i(x,y(x,h))(\partial_h^{ki+(l\hbox{ mod } k)} I(x,h)+C_i)-(\partial_h y)^{-(l\hbox{ mod } k)}P(x,y(x,h))=0
\end{equation}

Now we apply the same reduction procedure \underline{ReduceTelescoper} to the relation \eqref{eqpseudok2}. When differentiating in $x$, we still have the same relation
$$\partial_x (\partial_h^{ki+(l\hbox{ mod } k)} I(x,h)+C_i) = \partial_h^{ki+(l\hbox{ mod } k)} G$$
whatever the constants $C_i$ are. Thus, we obtain after reduction of order and diving by dominant coefficient a relation of the form
\begin{equation}\label{eqpseudok3}
\sum\limits_{i=0}^{\lfloor l/k \rfloor} a_i(\partial_h^{ki+(l\hbox{ mod } k)} I(x,h)+C_i)-(\partial_h y)^{-(l\hbox{ mod } k)}H(x,y(x,h))=0
\end{equation}
where the coefficients $a_i$ and certificate $H$ are the same as the telescoper $T$ obtained in step (6d).

Now the telescoper $T$ in step (6d) (after reduction) being correct is equivalent to have the relation
$$\sum\limits_{i=0}^{\lfloor l/k \rfloor} Q_i(x,y(x,h))(\partial_h^{ki+(l\hbox{ mod } k)} I(x,h))-(\partial_h y)^{-(l\hbox{ mod } k)}P(x,y(x,h))=0$$
up to adding to the integral $I(x,h)$ an arbitrary function of $h$. We see that adding to $I(x,h)$ a suitable exponential polynomial in $h$, any set of constants $C_i$ can be obtained. Thus the telescoper $T$ in step (6d) is correct, and thus FAIL answer does not occur. All these previous reasoning are correct if $(x_0,y_0) \notin \Sigma_{\mathcal{E}} \cup \Sigma_{\mathcal{D}}$. So FAIL answer can only happen when $(x_0,y_0)\in \Sigma_{\mathcal{E}} \cup \Sigma_{\mathcal{D}}$ or $F(x_0,y_0),U(x_0,y_0)$ is singular or $U(x_0,y_0)=0$. This is an algebraic set of codimension $1$ as required.

For the computation cost, steps $2,3,4,5$ compute $O(N\hbox{ord})$ series at order $O(N^2)$. Products and solving differential equation \eqref{eq1} cost $\tilde{O}(N^2)$, and thus total cost is $\tilde{O}(N^3\hbox{ord})$. Step 6b can be done using Pade Hermite: this equation can be seen as searching a linear combination of the $y(x)^i\int_0^x (LG_j(x,y(x)))_{\mid u(x,y)^{-1}=\partial_h y} dx,\; y(x)^i (\partial_h y(x))^j$ with series in $x$. The cost is $\tilde{O}(d^{\omega-1} \sigma)$ where $d$ is the number of terms and $\sigma$ the precision. here we have $d=O(N \hbox{ord})$ and $\sigma=O(N^2)$. Thus the cost is $\tilde{O}(N^{\omega+1} \hbox{ord}^{\omega-1})$. The reduction step 6d then cost according to Proposition \ref{propred} $O(N\hbox{ord}^{\omega+3})$ and thus total cost is $\tilde{O}(N^{\omega+1} \hbox{ord}^{\omega-1}+N\hbox{ord}^{\omega+3})$
\end{proof}

One can wonder what was the utility of $\tilde{P}$ as it is discarded at step 6b. We know that for a suitable choice of integration constants for $I(x,h)$, we can discard it. So we can discard it for a suitable choice of integration constants in $\partial_h^i I(x,h)_{\mid h=0}$, however we do not know this suitable choice beforehand. Adding constants to $\partial_h^i I(x,h)_{\mid h=0}$ makes potentially appear a linear combination of the $Q_i$, and as we want to keep the system solving linear, we are forced to introduce the $\tilde{P}$ only to take into account these unknown integration constants.

\section{Existence of a telescoper}

\subsection{The logarithm foliation}
For some foliations, it is possible to classify the possible telescopers, and then decide the existence of it for a given integral. One of the most interesting examples is the case
$$y=\ln x,\;\; \partial_x y= \tfrac{1}{x}, \;\; U=1$$

For this foliation, we can precise the possible form of integrals admitting a telescoper.

\begin{prop}\label{propex}
An integral of $G\in\mathbb{C}(x,\ln x)$ admitting a telescoper can be written
\begin{equation}\label{eqintln}
\sum_{p\in\mathbb{Z}^*,\lambda\in \mathbb{C}}\!\!\!\! a_{p,\lambda}\hbox{Ei}(p\ln x +\lambda)+
\!\!\!\!\!\!\sum_{p,r\in\mathbb{N}^*,\lambda\in \mathbb{C}^*}\!\!\!\!\!\!
b_{p,r,\lambda}(\ln x)^r \hbox{Li}_p(\lambda x)+\sum_{\lambda\in \mathbb{C}} \lambda \ln (K_{\lambda}(x,\ln x))+H(x,\ln x)
\end{equation}
where $K_\lambda,H$ are rational functions.
\end{prop}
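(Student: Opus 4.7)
The plan is to apply Corollary~\ref{cor1} to this foliation, where $\mathcal{F}(x,y) = y - \ln x$ with integrating factor $U = \partial_y\mathcal{F} = 1$ (the $1$-Darbouxian case), and then analyze the resulting line integrals. With $U=1$ the formula from Corollary~\ref{cor1} reduces to
$$J(x, y) = \sum_{\lambda \in S} \sum_{r=0}^{m_\lambda} e^{\lambda(y - \ln x)} (y - \ln x)^r \Phi_{\lambda, r}(x, y),$$
where $\Phi_{\lambda, r}(x, y) = \int e^{-\lambda(y-\ln x)} \sum_i (y - \ln x)^i \omega_{\lambda, i, r}$ and the $\omega_{\lambda, i, r}$ are rational $1$-forms. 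Since $I(x) = J(x, \ln x)$ and $(y - \ln x)^r$ vanishes on the leaf $y = \ln x$ for $r \geq 1$, only the $r = 0$ terms survive:
$$I(x) = \sum_{\lambda \in S} \Phi_{\lambda, 0}(x, \ln x).$$

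Next I would compute each $\Phi_{\lambda, 0}(x, \ln x)$ by passing to coordinates $(x, h)$ with $h = y - \ln x$, in which $D_x = \partial_x$, $D_h = \partial_h$ and $\mathcal{F} = h$. The closed 1-form to integrate becomes $e^{-\lambda h} \sum_i h^i \tilde\omega_{\lambda, i, 0}$ with coefficients rational in $(x, y) = (x, \ln x + h)$. Decomposing by partial fractions in $y$ (with $x$ as parameter) and integrating piecewise produces four kinds of contributions. Polynomial parts in $y$ combined with $e^{-\lambda h}h^i$ integrate to rational expressions in $(x, \ln x + h)$ that specialize at $h=0$ to elements of $\mathbb{C}(x, \ln x)$, feeding the final $H(x,\ln x)$ term. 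Simple $y$-poles at $y = a$ with $\lambda \neq 0$ yield, via the substitution $u = y - a$, primitives of $\hbox{Ei}$-type which, after absorbing the outer $e^{\lambda(y - \ln x)}$ and setting $y = \ln x$, produce $\hbox{Ei}(p \ln x + c)$ with $p = -\lambda$ and $c = \lambda a$. Simple $y$-poles with $\lambda = 0$ produce logarithmic terms $\log(y - a) = \log K_\mu(x, \ln x)$ with $K_\mu(x, y) = y - a$. Finally, the $dx$-component contributes integrals $\int R(x)(\ln x)^k dx$ where, for $R$ rational with simple poles at $1/\lambda \in \mathbb{C}^*$, iterated integration by parts produces $(\ln x)^r \hbox{Li}_p(\lambda x)$ plus elementary pieces.

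Matching expression~\eqref{eqintln} exactly then requires the additional constraints $\lambda \in \mathbb{Z}^*$ and each pole $a$ constant in $x$. Both are forced by rationality of $G = D_x J$ and $H = LJ$: for a term $J \supset c\, e^{\lambda(y - \ln x)} \hbox{Ei}(-\lambda(y - a(x)))$ a direct computation using $D_x[e^{\lambda(y-\ln x)}]=0$ and $D_x y = 1/x$ gives
$$D_x\bigl[c\, e^{\lambda(y - \ln x)} \hbox{Ei}(-\lambda(y - a(x)))\bigr] = \frac{c\, e^{\lambda a(x)}(1 - x a'(x))}{x^{\lambda + 1}(y - a(x))},$$
which belongs to $\mathbb{C}(x, y)$ iff $x^\lambda \in \mathbb{C}(x)$ (so $\lambda \in \mathbb{Z}$) and $e^{\lambda a(x)} \in \mathbb{C}(x)$ (so $a$ is constant, since $\lambda \neq 0$). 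Assembling all contributions across $\lambda$ then yields formula~\eqref{eqintln}.

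The hardest part will be this final compatibility step: rigorously ruling out exotic transcendental contributions and deducing simultaneously the integrality of the spectrum $S$ and the constancy in $x$ of the $y$-poles of the certificates. This plays the role of a Liouville-type structure theorem for antiderivatives in the Liouvillian extension $\mathbb{C}(x, \ln x)\bigl(\hbox{Ei}(p \ln x + c), \hbox{Li}_p(\lambda x)\bigr)$, and is the most delicate step of the argument.
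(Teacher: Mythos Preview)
Your proposal and the paper both start from Corollary~\ref{cor1} with $\mathcal{F}=y-\ln x$, $U=1$, but the extraction of~\eqref{eqintln} is organised differently. The paper does not specialise to $h=0$; instead it expands $(y-\ln x)^i$ binomially, absorbs the powers of $y$ into the rational coefficients, and rewrites $J$ as a linear combination of integrals $\int e^{\kappa(y-\ln x)}(\ln x)^r\bigl(R_1\,dx+R_2\,dy\bigr)$ with closed integrand. The decisive step is then a residue analysis: closedness forces the residue along every pole curve to be constant in $x$. Along a curve $\mathcal{C}$ other than $x=c$ or $y=c$ the residue is a sum $\sum_{\kappa,r} a_{\kappa,r}(x,\alpha(x))\,e^{\kappa\alpha(x)}x^{-\kappa}(\ln x)^r$ with $\alpha$ algebraic and non-constant, which can only be constant if every term vanishes (except $\kappa=r=0$, which yields the $\ln K_\lambda$ part). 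Along $y=c$ only $\kappa\in\mathbb{Z}$, $r=0$ survive, producing the $\mathrm{Ei}$ terms; along $x=c\neq0$ only $\kappa=0$ survives, producing the $(\ln x)^r\,\mathrm{Li}_p$ terms.

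Your route replaces this by partial fractions in $y$ followed by a post-hoc rationality check on $D_xJ$. The term-by-term computation you give for a single $\mathrm{Ei}$ contribution is correct, but, as you yourself flag, the final ``compatibility step'' is left open, and that is where the real content lies. Two situations are not covered by your outline: poles of the $\omega_{\lambda,i,r}$ along curves $P(x,y)=0$ that are neither of the shape $y=a$ nor $x=c$ (your partial-fraction roots $a(x)$ are then genuinely algebraic, and your $D_x$ check does not visibly handle them), and possible cancellations among several transcendental pieces sharing the same exponent $\lambda$. Both issues are resolved at once by the paper's device: constancy of residues of a closed $1$-form forces the exotic contributions to vanish directly, without having to reconstruct $G$ and argue term by term about its rationality. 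This use of closedness is the missing idea in your proposal.
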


\begin{proof}
We know that if $\int G(x,\ln x)$ admits a telescoper, this integral will be a linear combination of integrals of the form
$$\int e^{\kappa(y-\ln x)} (\ln x)^r (R_1(x,y)dx+R_2(x,y)dy)$$
with $r\in\mathbb{N}, \kappa\in\mathbb{C}$. Let us consider a pole (defining a curve $\mathcal{C}$) of this $1$ form which is not $x=c$ nor $y=c$. Then along this curve $\mathcal{C}$, we look at the residue in $x$ of the $1$ form. The residue is of the form
$$ \sum\limits_{\kappa \in S} \sum_{r \in \mathbb{N}} a_{\kappa,r}(x,\alpha(x)) e^{\kappa \alpha(x)} x^{-\kappa} (\ln x)^r$$
where $\alpha(x)$ is the algebraic function giving $y$ in function of $x$ on $\mathcal{C}$, and $a_{\kappa,r}$ polynomials. This residue should be constant in $x$ for the $1$ form to be closed. As $\alpha(x)$ is non constant, there can be no compensations between the terms, and each term is transcendental except $\kappa=0,r=0$. Thus the only possibility is that all $a_{\kappa,r}=0$ on $\mathcal{C}$ except possibly $a_{0,0}$. So all the forms are exact along this curve, except possibly for $\kappa=0,r=0$. This case corresponds to a rational closed $1$ form, which can thus be integrated using a log.

For the curve $y=c$, the residue in $x$ is of the form
$$ \sum\limits_{\kappa \in S} \sum_{r \in \mathbb{N}} a_{\kappa,r}(x,c) e^{\kappa c} x^{-\kappa} (\ln x)^r$$
Compensations can occur between terms for which $\kappa-\kappa'\in\mathbb{Z}$ and with the same $r$. However, only the terms with $\kappa\in\mathbb{Z}$ and $r=0$ can give a constant non zero function. These correspond to the integrals
\begin{equation}\label{eqex1}
\int \frac{e^{\kappa y}}{y-c} dy
\end{equation}

For the curve $x=0$, the residue in $y$ is of the form
$$ \sum\limits_{\kappa \in S} \sum_{r \in \mathbb{N}} a_{\kappa,r}(c,y) e^{\kappa y} c^{-\kappa} (\ln c)^r$$
No compensations can occur between terms with distinct $\kappa$'s. But for non zero $\kappa$ only transcendental functions can be obtained or $0$. Thus these cases will give exact forms along this curve. For $\kappa=0$, all terms are polynomials in $y$. The integrals for $\kappa=0$ correspond to the integrals
\begin{equation}\label{eqex2}
\int \frac{(\ln x)^r}{x-c} dx.
\end{equation}

At last, if the integral has no residue along a finite curve, then the $R_1,R_2$ are polynomials. These $1$ forms can be integrated with elementary functions, using here $e^{\kappa y},x^{-\kappa},\ln x, \ln \ln x$. These are included in the expression \eqref{eqintln}.

Thus, any $1$-form appearing in the expression of $\int G(x,\ln x)$ is, up to a linear combination of integrals \eqref{eqex1},\eqref{eqex2}, the integral of a closed rational $1$ form and integrals with polynomials $R_1,R_2$, an exact form. The integrals \eqref{eqex1},\eqref{eqex2} integrate using $Ei,Li$ special functions and logs, giving the expression \eqref{eqintln}.
\end{proof}



\begin{prop}\label{propbound}
If $f(x,\ln x)$ with  $f\in \mathbb{C}(x,y)$ admits a telescoper, then the $H$ in expression \eqref{eqintln} is such that $\hbox{den}(H)$ has the same factors as $\hbox{den}(f)$ with multiplicity at most one less and $\deg_x H \leq \max(0,\deg_x f +1),\; \deg_y H \leq \max(0,\deg_y f +1)$.\\
If expression \eqref{eqintln} does not contain a $H$ term, then the integrand has only simple poles except possibly at $x=0$. Conversely, if the integrand $f$ has only simply poles except possibly at $x=0$, then $H\in \mathbb{C}[x,\tfrac{1}{x},\ln x]$.\\
\end{prop}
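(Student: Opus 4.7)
The plan is to differentiate expression \eqref{eqintln} and compare the pole and degree structure of each term with that of $f$. Writing $f = D_x H + D_x E + D_x L + D_x G$ where $D_x = \partial_x + x^{-1}\partial_y$ and $E, L, G$ are the Ei, Li and log parts, the direct computations
$$D_x \hbox{Ei}(py+\lambda) = \frac{pe^\lambda x^{p-1}}{py+\lambda},\quad D_x(\lambda \ln K_\lambda) = \lambda\frac{D_x K_\lambda}{K_\lambda},$$
together with an identification of the rational parts of $D_x((\ln x)^r \hbox{Li}_p(\lambda x))$ as having denominators in $\{x^k,\, 1-\lambda x\}$, show that $D_x E + D_x L + D_x G$ carries only simple poles in $\mathbb{C}(x,y)$ along every irreducible $B \in \mathbb{C}[x,y]$ with $B \neq x$.

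The second step is a pole-increase lemma: for every irreducible $B \in \mathbb{C}[x,y]$, if $H$ has pole of order $n$ along $B$ then $D_x H$ has pole of order exactly $n+1$. Writing $H = P/B^n + \ldots$ with $\gcd(P,B)=1$, the leading pole of $D_x H$ is $-nP D_x B/B^{n+1}$; its vanishing would require $B \mid D_x B$, i.e. $B$ Darboux. For $B \neq x$, no such $B$ exists: the transcendence of $y - \ln x$ excludes a rational first integral, so by Jouanolou only finitely many irreducible Darboux polynomials occur, and a direct check on the cleared vector field $x\partial_x + \partial_y$ identifies $x$ as the only one. For $B = x$, the expansion at $x = 0$ together with $\partial_y x = 0$ gives $v_x(D_x H) = v_x(H) - 1$ directly. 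Combining with the first paragraph, for any $B \neq x$ of multiplicity $n \geq 1$ in $\hbox{den}(H)$, the dominant pole of $D_x H$ cannot be cancelled and $v_B(f) = -(n+1)$, giving multiplicity $n+1$ in $\hbox{den}(f)$. The $B = x$ case uses in addition that each Ei-derivative contributing at $x = 0$ carries a $(py+\lambda)^{-1}$ factor, so cannot exactly cancel a pure $x$-polar term of $D_x H$.

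For the degree bound, if $H \sim c(y) x^d$ at $x = \infty$ with $d \geq 1$, then $D_x H \sim (dc + c')(y) x^{d-1}$, and $dc + c' \neq 0$ for rational nonzero $c$ (since $dc + c' = 0$ forces $c = Ce^{-dy}$, not rational). Since $D_x L$ and $D_x G$ contribute at $x$-degree $\leq -1$, while $D_x \hbox{Ei}_{p,\lambda}$ contributes at $x$-degree $p-1$, matching $x^{d-1}$ coefficients gives $\deg_x f \geq d - 1$; the bound in $y$ is symmetric, exchanging the roles via the relation $D_x y = 1/x$. The final two assertions then follow: absence of $H$ leaves $f = D_x E + D_x L + D_x G$ with only simple poles away from $x = 0$; conversely, the pole bound forces $\hbox{den}(H)$ to be a power of $x$ with no $y$-poles, giving $H \in \mathbb{C}[x, \tfrac{1}{x}, y] = \mathbb{C}[x, \tfrac{1}{x}, \ln x]$.

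The main obstacle I foresee is ruling out cancellations between $D_x H$ and an Ei-derivative with matching index $p = d$ (in the degree bound), or between $D_x H$ and the $x = 0$ singularities of $D_x E$ (in the $B = x$ pole case). The cleanest resolution is to argue that any such cancellation permits merging the offending $H$-term with the corresponding Ei term and thus a strictly smaller decomposition; working with a decomposition minimal under these merges then yields the stated bounds.
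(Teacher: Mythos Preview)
Your approach is essentially the paper's: compare the pole and degree structure of $D_xH$ against the other terms of \eqref{eqintln}, using that $x$ is the only Darboux polynomial of $x\partial_x+\partial_y$ to get the pole-increase lemma. Two points deserve sharpening.

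\textbf{The cancellation issue.} You flag the right obstacle, but your proposed fix via a ``minimal decomposition'' is unnecessary and not obviously well-defined (merging a rational term of $H$ into an Ei term does not generally stay within the class \eqref{eqintln}). The paper resolves the cancellation directly and cleanly: at the leading order $x^{d-1}$ with $d=\deg_xH$, the Ei contribution is $\sum_\lambda r_\lambda/(y-\lambda)$, a rational function of $y$ with only \emph{simple} poles, while the $H$ contribution is $dw(y)+w'(y)$ with $w$ rational. If $w$ has a pole of order $k\ge 1$ somewhere, $dw+w'$ has a pole of order $k+1\ge 2$ there; if $w$ is a polynomial, $dw+w'$ is a nonzero polynomial. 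In neither case can $dw+w'$ equal a pure sum of simple fractions, so no cancellation occurs and $\deg_x f\ge d-1$. This is exactly your computation $dc+c'\neq 0$ pushed one step further; you should replace the minimality argument by this observation.

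\textbf{The $y$-degree bound.} Saying the bound in $y$ is ``symmetric via $D_xy=1/x$'' is too loose: there is no literal $x\leftrightarrow y$ symmetry of the foliation. The paper instead substitutes $\ln x\mapsto \ln x+h$ (equivalently $y\mapsto y+h$) and expands at $h\to\infty$. Now the roles swap: the Li terms give the leading $h^p$ contribution as $\sum_\lambda r_\lambda/(x-\lambda)$ (simple poles in $x$), while $H(x,y+h)$ contributes $h^p\,w'(x)$ with $w$ the leading $y$-coefficient of $H$, which again has only multiple poles. The same simple-versus-multiple argument then gives $\deg_yH\le\max(0,\deg_yf+1)$. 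You should use this $h$-expansion rather than an informal symmetry claim.
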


\begin{proof}
We look at the series expansion in $x$ at infinity of expression \eqref{eqintln}. Assume it grows at least as $x$, the $K$ part nor $Li$ part plays any role in the dominant term. After differentiation, the $Ei$ terms give a dominant term of the form
$$x^{p-1}\sum_{\lambda\in\mathbb{C}} \frac{r_\lambda}{\ln x-\lambda}$$
and the $\partial_x H$ part gives a term of the form
$$\partial_x (H(x,\ln x))=x^{p-1}(p w(\ln x)+w'(\ln x)) +o(x^{p-1}),\quad w\in\mathbb{C}(y).$$
So by construction, any pole of the $\partial_x H$ part will be multiple, and any pole of the $Ei$ part will be simple. Thus no compensation is possible. Thus we have $\deg_x H \leq \max(0,\deg_x f +1)$.

We now look at the series expansion in $h$ of \eqref{eqintln} when replacing $\ln x$ by $\ln x +h$. Assume it grows at least as $h$, the $K$ part nor $Ei$ part plays any role in the dominant term. After differentiation, the $Li$ terms give a dominant term of the form
$$h^p\sum_{\lambda\in\mathbb{C}} \frac{r_\lambda}{x-\lambda}$$
and the $\partial_x H$ part give a term of the form
$$\partial_x (H(x,\ln x+h))=h^p w'(x) +o(h^p),\quad w\in\mathbb{C}(y).$$
So by construction, any pole of the $H$ part will be multiple, and any pole of the $Li$ part will be simple. Thus no compensation is possible. Thus we have $\deg_y H \leq \max(0,\deg_y f +1)$.

Finally, any pole of $H$ gives a pole of $\partial_x H+ \tfrac{1}{x} \partial_y H$ as the vector field is never tangent to any algebraic curve, and for the singular curve $x=0$, we have
$$(\partial_x + \tfrac{1}{x} \partial_y)( x^{-k}w(y))=x^{-k-1}(-kw(y)+w'(y))+o(x^{-k-1})$$
which can never simplify as $w$ should be a rational function. This gives the statement about $H$.

For the second statement, differentiating in $x$ expression \eqref{eqintln} gives simple poles except for the $Ei$ parts for which
$$\partial_x Ei(p\ln x + \lambda)= x^{p-1}\sum_{\lambda\in\mathbb{C}} \frac{r_\lambda}{\ln x-\lambda}$$
and thus substituting $\ln x=y$ gives a rational fraction with a multiple pole at $x=0$. Conversely, if $f$ has only simple poles outside $x=0$, we can apply the first part of the Proposition: the rational fraction $H$ cannot have any poles except the multiple ones, so here only $x=0$. Thus $H\in\mathbb{C}[x,\tfrac{1}{x},\ln x]$.

\end{proof}

\begin{prop}\label{propelem}
A fraction in $f\in \mathbb{C}(x,y)$ with only simple poles except possibly at $x=0$ such that $\int f(x,\ln x) dx$ is elementary can be written
$$ f(x,y)= \sum_{\lambda\in\mathbb{C}} \frac{r_\lambda}{x(y-\lambda)}+ \sum_{\lambda\in\mathbb{C}} \frac{s_\lambda}{x-\lambda} + g(x,y) $$
where $g$ has no pole along a line $x=\lambda,\lambda\neq 0$ or $y=\lambda,\; \lambda\in\mathbb{C}$.
\end{prop}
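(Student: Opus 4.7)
The strategy is to apply Liouville's theorem to the elementary integral and then read off the pole structure of $f$ along vertical and horizontal lines from the logarithmic derivatives. Since $\int f(x,\ln x)\,dx$ is elementary over the differential field $\mathbb{C}(x,\ln x)$, whose constants $\mathbb{C}$ are algebraically closed, Liouville's theorem yields
\[
\int f(x,\ln x)\,dx = V_0(x,\ln x) + \sum_i c_i \ln V_i(x,\ln x),
\]
with $V_0, V_i \in \mathbb{C}(x,y)$ and $c_i \in \mathbb{C}$. Differentiating in $x$ and using the transcendence of $\ln x$ over $\mathbb{C}(x)$ to equate rational functions in $(x,y)$ gives
\[
f(x,y) = D_x V_0 + \sum_i c_i \frac{D_x V_i}{V_i}, \qquad D_x = \partial_x + \tfrac{1}{x}\partial_y.
\]
This realizes $\int f(x,\ln x)\,dx$ as the elementary specialization of \eqref{eqintln} (all $a_{p,\lambda}$, $b_{p,r,\lambda}$ vanish, since $\hbox{Ei}$ and $\hbox{Li}_p$ are differentially transcendental over any elementary extension of $\mathbb{C}(x,\ln x)$), so the simple-pole hypothesis on $f$ combined with Proposition~\ref{propbound} forces $V_0 \in \mathbb{C}[x, x^{-1}, y]$. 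In particular $D_x V_0$ is polynomial in $x, x^{-1}, y$ and has no pole along any line $x = \lambda \neq 0$ or $y = \lambda$.

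Next, factor each $V_i = \prod_j P_{ij}^{e_{ij}}$ into irreducible polynomials and expand
\[
\sum_i c_i \frac{D_x V_i}{V_i} = \sum_{i,j} c_i e_{ij} \frac{D_x P_{ij}}{P_{ij}};
\]
each summand has a simple pole exactly along the curve $\{P_{ij}=0\}$. Classify the factors: $P_{ij}=x$ contributes $1/x$ (regular outside $x=0$); $P_{ij}=x-\lambda$ with $\lambda\neq 0$ contributes $\frac{1}{x-\lambda}$; $P_{ij}=y-\lambda$ contributes $\frac{D_x(y-\lambda)}{y-\lambda}=\frac{1}{x(y-\lambda)}$; every remaining irreducible $P_{ij}$ defines a curve distinct from all vertical and horizontal lines, hence meets each such line in only finitely many points and produces no pole along any of them. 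Grouping the second and third types of contributions gives exactly the sums $\sum_\lambda s_\lambda/(x-\lambda)$ and $\sum_\lambda r_\lambda/(x(y-\lambda))$ required by the statement (with $s_\lambda, r_\lambda$ the appropriate finite $\mathbb{C}$-linear combinations of the $c_i e_{ij}$), while bundling $D_x V_0$ together with the ``$P_{ij}=x$'' and ``generic curve'' contributions yields the residual $g$, which is regular along every line $x=\lambda\neq 0$ and every $y=\lambda$ by construction.

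The only conceptually delicate step is the identification of the Liouvillian $V_0$ with the $H$ of \eqref{eqintln}, so that Proposition~\ref{propbound} is applicable; this rests on the differential-algebraic transcendence of $\hbox{Ei}$ and $\hbox{Li}_p$ over elementary extensions of $\mathbb{C}(x,\ln x)$. Everything else is a routine residue classification of the irreducible factors of the $V_i$.
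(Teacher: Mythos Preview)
Your proof is correct but proceeds along a different route from the paper's. The paper argues directly via partial fractions and a residue-in-parameter criterion: writing the simple-pole part of $f$ along $x=\lambda$ as $s_\lambda(y)/(x-\lambda)$ and along $y=\lambda$ as $w_\lambda(x)/(y-\lambda)$, it substitutes $y=\ln x+h$ and notes that the resulting residues of the integrand, namely $s_\lambda(\ln\lambda+h)$ and $e^{\lambda-h}w_\lambda(e^{\lambda-h})$, must be independent of $h$ for an elementary integral; this forces $s_\lambda\in\mathbb{C}$ and $xw_\lambda(x)\in\mathbb{C}$ at once. Your approach instead invokes Liouville's theorem to write $f=D_xV_0+\sum c_i\,D_xV_i/V_i$, controls $V_0$ through Proposition~\ref{propbound}, and then reads the line-pole contributions off the linear factors of the $V_i$. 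The paper's argument is shorter and self-contained (it does not rely on Proposition~\ref{propbound}), while yours is more structural and makes the origin of each summand explicit. One small remark: your invocation of the differential transcendence of $\hbox{Ei}$ and $\hbox{Li}_p$ is superfluous, since the Liouville expression is already an instance of \eqref{eqintln} with vanishing $a_{p,\lambda},b_{p,r,\lambda}$, and the proof of Proposition~\ref{propbound} applies to any such representation.
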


\begin{proof}
We assume that $\int f(x,\ln x) dx$ is elementary and has no multiple poles outside $x=0$. Computing the partial fraction decomposition in $x$ of $f$ will give for poles of the form $x-\lambda,\; \lambda\neq 0$ an expression of the form
$$\frac{s_\lambda(y)}{x-\lambda}$$
Now replacing $y$ by $\ln x+h$, we obtain either a residue which depends on $h$ (this is forbidden for elementary integrals), or $s_\lambda$ is constant (these terms are allowed by the second sum).
Now computing the partial fraction decomposition in $y$ of $f$ will give for poles of the form $y-\lambda$ an expression of the form
$$\frac{w_\lambda(x)}{y-\lambda}$$
Now replacing $y$ by $\ln x+h$, we obtain either a residue which depends on $h$ (this is forbidden for elementary integrals), or $xw_\lambda(x)$ is constant (these terms are allowed by the first sum). Thus removing these terms from $f$ produces a function $g$ which has no pole along a line $x=\lambda,\lambda\neq 0$ or $y=\lambda,\; \lambda\in\mathbb{C}$.
\end{proof}

\noindent\underline{\sf IntegrateLnFoliation}\\
\textsf{Input:} A rational function $f\in\mathbb{C}(x,y)$\\
\textsf{Output:} If possible, an expression of the form \eqref{eqintln}, or ``None''.
\begin{enumerate}
\item  Note $F=P/Q$ with $Q$ the polynomial whose factors are those of $\hbox{den}(f)$ with multiplicity one less, and $P$ polynomial with undetermined coefficients with $\deg_x P \leq \max(\deg_x Q,\deg_x f+\deg_x Q +1),\; \deg_y P \leq \max(\deg_y Q,\deg_y f+\deg_y Q +1)$.
\item Look for $P$ such that $f-(\partial_x F+\tfrac{1}{x}\partial_y F)$ has only simple poles outside $x=0$. If none, return ``None''. Else note $\tilde{f}$ the resulting fraction.
\item Compute the residue in $y$ along poles of $\tilde{f}$ of the form $y=\lambda$. If they are in $\mathbb{C}[x,1/x]$, then compute the integral for these terms, and remove them from $\tilde{f}$. Else return ``None''.
\item Compute the residue in $x$ along poles of $\tilde{f}$ of the form $x=\lambda,\; \lambda\neq 0$. If they are in $\mathbb{C}[y]$, then compute the integral for these terms, and remove them from $\tilde{f}$. Else return ``None''.
\item Look for an integral of $\tilde{f}$ of the form
$$P(x,y)+\sum \lambda_i \ln f_i(x,y) $$
where $f_i(x,y)$ are the absolute factors of the denominator of $\tilde{f}$,and $P\in\mathbb{C}[x,\tfrac{1}{x},y], \; \deg P \leq \max(0,\deg \tilde{f})+2$ and the order of the pole at $x=0$ at most one less than the one in $\tilde{f}$. If one is found, return the expression for the integral, else return ``None''.
\end{enumerate}

\begin{proof}[Proof of correctness of algorithm \underline{\sf IntegrateLnFoliation}]
In steps $1,2$ we build the rational fraction $F$ with degree bounds of Proposition \ref{propbound}, and thus is $f$ has a telescoper, such $F$ should exist such that all multiple poles except possibly $x=0$ are removed.
In steps $3,4$, we compute the residues of $\tilde{f}$ along $y=\lambda$ and $x=\lambda$. If the poles in $1/(y-\lambda)$ do not have a coefficient of the form $\mathbb{C}[x,1/x]$, then they cannot be removed using either the $Ei$ functions nor the elementary part of \eqref{eqintln}. Similarly with the poles in $1/(x-\lambda)$ which cannot be remover if not of the form $\mathbb{C}[y]$.

Once these parts are removed, we thus have an integrand $\tilde{f}$ whose integral can be written \eqref{eqintln} but without poles along $y=\lambda$ and $x=\lambda,\lambda\neq 0$. We can decompose $\tilde{f}=\tilde{f}_1+\tilde{f}_2$ where integral of $\tilde{f}$ is a linear combination of the $Ei,Li$, and integral of $\tilde{f}_2$ is elementary. By Proposition \ref{propelem}, we can write
$$ \tilde{f}_2(x,y)= \sum_{\lambda\in\mathbb{C}} \frac{r_\lambda}{x(y-\lambda)}+ \sum_{\lambda\in\mathbb{C}} \frac{s_\lambda}{x-\lambda} + g(x,y) $$
with $g$ smooth along $y=\lambda$ and $x=\lambda,\lambda\neq 0$.
Now the part $\tilde{f}_1$ has to exactly compensate these poles, but the derivatives for the $Ei,Li$ gives integrands which a linear combinations of terms of the form
$$\frac{x^{p-1}}{y-\lambda}, p\neq 0,\quad \frac{y^p}{x-\lambda},\; p\in\mathbb{N}^*,\; \lambda\neq 0$$
These can thus never compensate the poles of $\tilde{f}_2$. Thus $\tilde{f}_2$ had no such poles, nor $\tilde{f}_1$. This implies that $\tilde{f}_1=0$, and thus that $\tilde{f}$ has an elementary integral.

Now in step $5$, we know that $\tilde{f}$ should be elementary. Knowing that it has no multiple poles outside $x=0$, we have that its integral should write
$$P(x,y)+\sum \lambda_i \ln f_i(x,y) $$
with $P\in\mathbb{C}[x,\tfrac{1}{x},y]$ and $f_i$ factors of the denominator of $\tilde{f}$. Looking near the line at infinity, we have
$P(x,y)=x^\nu g_\nu(y/x)+x^{\nu-1} g_{\nu-1}(y/x) +\dots  $
and applying derivation gives
$$(\partial_x+\tfrac{1}{x}\partial_y ) (x^\nu g_\nu(y/x)+x^{\nu-1} g_{\nu-1}(y/x))=$$
$$x^{\nu-1}(\nu g_\nu-y/x g_\nu'(y/x))+x^{\nu-2}(\nu g_{\nu-1}(y/x)-y/x g_{\nu-1}'(y/x)+g_\nu'(y/x)-g_{\nu-1}(y/x))+\dots$$
Solving these differential equations, we find $g_\nu(y/x)=(y/x)^\nu$, but then for $g_{\nu-1}$, no rational solution is possible. Thus the derivation $\partial_x+\tfrac{1}{x}\partial_y $ can lower the degree of $P$ by at most $2$, which gives the bound of step $5$. The order of a pole at $x=0$ decreases exactly by one by differentiation, and thus the pole order at $x=0$ of $P$ should be one less than in $\tilde{f}$. Now the lasting part is the $K$ part, which is a linear combination of logs, and thus whose poles should appear as poles of $\tilde{f}$. Thus the $\ln K(x,\ln x)$ can be written as in step $5$, and so if no solution is found, then none exists.

\end{proof}

From this algorithm, the telescoper can be obtained by the formula
$$\partial_h^j\prod_{i\in  S} (\partial_h -i)$$
where $S$ is the set of exponents of $x$ appearing in the residues at $y=\lambda$ in $\tilde{f}$ and $j$ is the maximal degree of the polynomials in $y$ appearing in the residues at $x=\lambda$ in $\tilde{f}$.\\

\noindent
\textbf{Example} 1: $y=\ln x,\;\partial_x y= \frac{1}{x},\; U=1$\\
$$I_1=\int \frac{x^2}{(\ln x)^2} dx,\quad D_h I_1+ 3I_1=\frac{4x^3-y^2}{4y^2},\quad
I_1= -\frac{x^3}{\ln x} + 3Ei(3\ln x)$$
$$I_2=\int \frac{x^3+(\ln x)^3+x^2}{(x+1)\ln x} dx,\quad 
D_h^4 I_2 +3D_h^3 I_2= \frac{16y^4-162x^3}{27y^4}$$
$$I_2= 2\ln x Li_2(-x)-2Li_3(-x) + Ei(3\ln x)+ (\ln x)^2\ln(x+1)$$
$$I_3=\int \frac{2x(\ln x)^2+(\ln x)^3+(\ln x)^2-x-\ln x}{x(x+\ln x)(\ln x)^2},\quad
D_hI_3 = -\frac{4xy^2+4y^3-45y^2+45x+45y}{45y^2(y+x)},$$
$$I_3=\ln x + \frac{1}{\ln x} + \ln(x+\ln x)$$
For all of these cases, Maple heuristics manage to find the integral expression. However, if a mixture of elementary and exponential integrals is necessary, it fails
$$I_4= \int \frac{x^2+2x\ln x+\ln x}{x(x+\ln x) \ln x} dx,$$
$$D_h^2I_4+D_h I_4=\frac{14x^2y^2+28xy^3+14y^4-225x^3-450x^2y+225y^3-225y^2}{225y^2(y+x)^2}
,$$
$$I_4=Ei(\ln x) + \ln(x+\ln x)$$

\subsection{A Hamiltonian foliation}

We consider the Hamiltonian system given by the one degree of freedom Hamiltonian
$\mathcal{F}(x,y)=\ln x +\lambda\ln y + \mu \ln (x+y-1) $\\

Up to affine transformation and multiplication, any linear combination of logs of affine functions with no common zero can be written like this. Solving the system leads to the integral
$$I_{10}=\int_{\mathcal{F}(x,y)=h}  \frac{yx+y^2-y}{\lambda x+\lambda y+\mu y-\lambda} dx$$
In this example, we will manage to prove the non existence of a telescoper.

\begin{prop}
If $1,\lambda,\mu\in\bar{\mathbb{Q}}$ are $\mathbb{Q}$-independent and under Schanuel conjecture, the integral $I_{10}$ has no telescoper.
\end{prop}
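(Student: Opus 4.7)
The plan is to assume a telescoper exists, invoke Corollary~\ref{cor1} to write $I_{10}$ in a Liouvillian shape built from exponentials of $\mathcal{F}$, and then obstruct that shape via the transverse monodromy of $\mathcal{F}$ combined with Schanuel's conjecture applied to $1,\lambda,\mu$. First I would set the stage. The integrating factor $U=\partial_y\mathcal{F}=\lambda/y+\mu/(x+y-1)$ is rational, so the foliation is $1$-Darbouxian and Theorem~\ref{thm1} forces any telescoper to have constant coefficients with certificate $(\partial_h y)^l H$, $H\in\mathbb{C}(x,y)$. A pleasant reformulation is $G=1/U=\partial_h y$, so $I_{10}=\int\partial_h y\,dx$ is, up to an integration constant, the $h$-derivative of the action integral $\int y\,dx$ along the leaves. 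Under the telescoper hypothesis, Corollary~\ref{cor1} yields
\[
 I_{10}(x,h)=J(x,y(x,h)),\quad J(x,y)=\sum_{\alpha\in S}\sum_{r=0}^{m_\alpha} e^{\alpha\mathcal{F}}\mathcal{F}^r\int e^{-\alpha\mathcal{F}}\sum_{i,j}\mathcal{F}^i U^j\,\omega_{\alpha,i,j,r},
\]
where the $\omega$'s are rational $1$-forms and $S$ is the spectrum of the companion matrix of the telescoper.

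The second step is to reduce to $S=\{0\}$. Looping around the three invariant divisors $x=0$, $y=0$, $x+y-1=0$ shifts $\mathcal{F}$ respectively by $2\pi i$, $2\pi i\lambda$, $2\pi i\mu$, and these three shifts are $\mathbb{Q}$-linearly independent, generating a rank-$3$ subgroup $\Lambda\subset\mathbb{C}$. Each summand of $J$ transforms by the exterior factor $e^{\alpha\tau}$ under $\mathcal{F}\mapsto\mathcal{F}+\tau$, while the whole of $J$ must transform by periods of $G\,dx$, which are controlled by residues of the rational form $G\,dx$. Matching the two behaviours term by term forces, for each nonzero $\alpha\in S$, the three numbers $e^{2\pi i\alpha}$, $e^{2\pi i\alpha\lambda}$, $e^{2\pi i\alpha\mu}$ to lie in $\bar{\mathbb{Q}}$. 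Schanuel applied to the triple $(2\pi i\alpha,\,2\pi i\alpha\lambda,\,2\pi i\alpha\mu)$, which is $\mathbb{Q}$-linearly independent as soon as $\alpha\neq 0$, then predicts that the transcendence degree over $\mathbb{Q}$ of this triple together with its three exponentials is at least $3$; yet the three exponentials are assumed algebraic and the relations $\lambda=(2\pi i\alpha\lambda)/(2\pi i\alpha)\in\bar{\mathbb{Q}}$, $\mu=(2\pi i\alpha\mu)/(2\pi i\alpha)\in\bar{\mathbb{Q}}$ cut that transcendence degree down to $1$, a contradiction. Hence $S=\{0\}$, and iterated integration by parts collapses $J$ into an elementary shape $P(x,y,\mathcal{F})+\sum_k c_k\log R_k(x,y)$ with $P\in\mathbb{C}(x,y)[\mathcal{F}]$ and $R_k\in\mathbb{C}(x,y)$.

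Excluding this elementary shape for $I_{10}$ is the main obstacle. I would pick a generic algebraic base point $(x_0,y_0)$ on the leaf $\mathcal{F}=h_0$ and compare the Taylor expansions of both sides in $x-x_0$, obtained by implicitly solving $\mathcal{F}(x,y)=h_0$ near $(x_0,y_0)$. The right-hand side produces Taylor coefficients in the field $\bar{\mathbb{Q}}(\lambda,\mu)(\ln x_0,\ln y_0,\ln(x_0+y_0-1))$, while the left-hand side $\int\partial_h y\,dx$ mixes those three logs with $\lambda,\mu$ in strictly richer combinations coming from the successive derivatives $\partial_h^k y$ at $(x_0,y_0)$. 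A second use of Schanuel, this time on the $\mathbb{Q}$-linearly independent triple $(\ln x_0,\ln y_0,\ln(x_0+y_0-1))$, valid for generic algebraic $x_0,y_0$, yields their algebraic independence over $\bar{\mathbb{Q}}(\lambda,\mu)$, and the first Taylor coefficient violating the prescribed elementary shape produces the sought obstruction. The technical difficulty, which is the heart of the proof, is precisely in pinpointing that explicit low-order coefficient and making its Schanuel-forbidden algebraic relation rigorous.
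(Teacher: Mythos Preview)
Your outline shares the paper's overall architecture---reduce the spectrum $S$ to $\{0\}$, then exclude an elementary expression---but each of the three steps has a real gap, and the last one is fatal.

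\textbf{Reducing to $S=\{0\}$.} The assertion that under $\mathcal{F}\mapsto\mathcal{F}+\tau$ the summand indexed by $\alpha$ is simply multiplied by $e^{\alpha\tau}$ is not correct: the polynomial factors $\mathcal{F}^r$ shift to $(\mathcal{F}+\tau)^r$ and the inner primitive $\int e^{-\alpha\mathcal{F}}(\cdots)$ picks up periods, so different summands mix and your ``term by term'' matching does not go through. Nothing in your argument rules out cancellations among the $\alpha$'s. This is precisely why the paper does \emph{not} argue via monodromy here: it invokes the decomposability criterion of \cite{combot2019symbolic} (under Schanuel), which says that an integral $\int e^{\alpha\mathcal{F}}\omega$ with rational $\omega$ can occur only if $e^{\alpha\mathcal{F}}=f(V)W$ for some rational $V,W$, and then does an explicit decomposition analysis of $x^ny^m(x+y-1)^l$ to exclude nonzero $\alpha$.

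\textbf{Collapsing $S=\{0\}$ to an elementary form.} ``Iterated integration by parts'' does not by itself kill the polynomial dependence on $\mathcal{F}$; you end up with a form $\omega_1\mathcal{F}+\omega_0$ which must be closed, and you need a reason why $\omega_1$ is a multiple of $d\mathcal{F}$. The paper's residue computation on the pole $x+y-1=0$ (where a residue of the shape $a\ln x+b\ln(1-x)$ cannot be constant) is exactly this step, and it uses the $\mathbb{Q}$-independence of $1,\lambda,\mu$ to pin down the pole locus.

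\textbf{Excluding the elementary case.} Here your plan does not work. Along a fixed leaf with algebraic base point $(x_0,y_0)$, the Taylor coefficients of $I_{10}$ in $x-x_0$ are algebraic (the implicit function theorem gives $y(x)$ with algebraic jets), and on the right-hand side $\mathcal{F}$ restricts to the constant $h_0$, so beyond the constant term those coefficients are algebraic too. There is no transcendence mismatch to invoke Schanuel on; your proposed obstruction simply is not there. The paper's argument is of a completely different nature and is the actual heart of the proof: once the integral is forced to be $a\ln x+b\ln y+c\ln(x+y-1)+R(x,y)$, one shows that $D_x$ cannot lower the pole order of $R$ along any of the Darboux curves $x,y,x+y-1$ (because the formal first integrals $e^{-n\mathcal{F}},e^{-n\mathcal{F}/\lambda},e^{-n\mathcal{F}/\mu}$ have irrational exponents and hence non-rational leading series), which confines $R$ to the finite-dimensional space $\{Q/(xy(x+y-1)):\deg Q\le 4\}$. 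One then checks that the resulting linear system for $(a,b,c,Q)$ against the specific integrand of $I_{10}$ has no solution. You have no substitute for this finite verification.
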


\begin{proof}
If $I_{10}$ admitted a telescoper $L$, then it would have constant coefficients and we would satisfy the system
$$D_x I_{10}=\frac{yx+y^2-y}{\lambda x+\lambda y+\mu y-\lambda},\quad  LI_{10}=K$$
with $K\in\bar{\mathbb{Q}}(x,y)$. As the telescoper can be built from series with rational coefficients and linear system solving, the operator $L$ will have algebraic coefficients. We now consider factor $D_h-\alpha$ of $L$, with thus $\alpha\in\bar{\mathbb{Q}}$. By integration, we obtain an expression of the form
\begin{equation}\label{eqform}
\int e^{\alpha \mathcal{F}(x,y)}(P_1(x,y)dx+P_2(x,y)dy)   
\end{equation}
Now following the result of Theorem 1 of \cite{combot2019symbolic} (which requires Schanuel conjecture), we know that $e^{\alpha \mathcal{F}(x,y)}$ has to be decomposable, i.e. can be written under the form
$$e^{\alpha \mathcal{F}(x,y)}=f(V(x,y))W(x,y),\; V,W\in\mathbb{C}(x,y).$$
If $1,\alpha,\alpha\lambda,\alpha\mu$ are $\mathbb{Q}$-independant then the number of logs in $e^{\alpha \mathcal{F}(x,y)}$ cannot be reduced. Thus decomposition is impossible. Else there is a relation between $1,\alpha,\alpha\lambda,\alpha\mu$, and we can write
$$ e^{\alpha \mathcal{F}(x,y)}= A(x,y)^{1/k} F_1(x,y)^{\nu_1}F_2(x,y)^{\nu_2}$$
with $\nu_1,\nu_2$ $\mathbb{Q}$-independent. We can assume that $V$ is indecomposable (in the sense of composition of rational fractions \cite{cheze2014decomposition}) and then $V$ has to be the decomposition of each of the $F_i$. We know there are exactly two $F_i$ as $\alpha,\alpha\lambda,\alpha\mu$ are $\mathbb{Q}$-independent. We do not know the $F_i$, but as $\alpha \mathcal{F}(x,y)$ can be written as a linear combination of logs of the $F_i$ and $A$, the $F_i$ are product of powers of $x,y,x+y-1$. Thus $V$ is the decomposition of a rational function of the form
$$x^ny^m(x+y-1)^l$$
for at least two different non proportional triplets $(n,m,l),(n',m',l')$. Making the product of these expressions with suitable powers allows to eliminate the factor $x+y-1$, and we deduce that $V$ is the decomposition of a function of the form $x^ay^b$. Then restricting to the level $x^ay^b=1$, we should have
$$x^{n-am/b}(x+x^{-a/b}-1)^l$$
constant, which requires $l=0$, $(n,m)$ multiple of $(a,b)$. So it is impossible to have two non proportional triplets $(n,m,l),(n',m',l')$.

Thus for $\alpha\neq 0$, an integral of the form \eqref{eqform} should be exact, and then telescoper would reduce. Thus the telescoper has to be of the form $\partial_h^p$. Assume the minimal $p$ is $\geq 2$. Then the formula for $D_h^{p-2} I_{10}$ will contain an integral of a closed $1$ form $\omega=\omega_1 \mathcal{F}+\omega_0$ with $\omega_1,\omega_0$ with coefficients in $\mathbb{C}(x,y)$. We want to find for which forms $\omega_1$ the form $\omega$ is closed.

As $\omega$ is closed, the residues along the poles should be constant. Thus the poles of $\omega$ should be constant levels of $\mathcal{F}$, and as $1,\lambda,\mu\in\bar{\mathbb{Q}}$ are $\mathbb{Q}$-independent, only $x,y,x+y-1$ are possible. Because of the additive Galois action on $\mathcal{F}$, $\omega_1$ is closed, and thus
$$\int \omega_1=a\ln x +b \ln y + c\ln(x+y-1)+R(x,y)$$
with $R$ rational. Subtracting $\tfrac{c}{2\mu} d(\mathcal{F}^2)$ to $\omega$ removes the coefficient $c$, and so we can assume $c=0$. We now perform an integration by part, and we rewrite
$$\omega= d(\mathcal{F} (a\ln x +b \ln y)) -\left( d\mathcal{F} (a\ln x +b \ln y)+\tilde{\omega}_0\right)$$
with $\tilde{\omega}_0$ rational. Now along the pole $x+y-1=0$ of $d\mathcal{F}$, the residue cannot be constant because of the term in the numerator $a\ln x+b\ln(1-x)$, except if $a=b=0$.
Thus $\omega_1=0$ and $\omega$ is rational. Thus we had, before removing the coefficient $c$ by subtracting $\tfrac{c}{2\mu} d(\mathcal{F}^2)$,
$$\int \omega = \frac{c}{2\mu} \mathcal{F}^2+\int \tilde{\omega_0},$$
and in particular, $\omega_1$ should be a multiple of $d\mathcal{F}$. However, $\mathcal{F}^2$ is constant on the foliation, and thus this term can be removed as an integration constant. Thus $D_h^{k-2} I_{10}$ can be written as an integral of a rational $1$ form, and so has for telescoper $\partial_h$, and so $p$ was not minimal.

Now the only cases left are the telescopers $\partial_h,1$. In these cases, the integral is elementary, i.e. a linear combination of logs and a rational function. The singular curves of this integral are either poles of the integrand, or Darboux polynomials. Indeed, if a curve is transverse to the vector field, the Lie derivative will increase the order of the pole. As $1,\lambda,\mu\in\bar{\mathbb{Q}}$ are $\mathbb{Q}$-independent, the only Darboux polynomials are $x,y,x+y-1$, and thus we can write
$$I_{10}=a\ln x +b \ln y + c\ln(x+y-1)+R(x,y)$$
where $R$ is rational. We now need to control the order of the poles of $R$ which are Darboux polynomials. Let us consider a pole $P$ Darboux polynomial of multiplicity $n$. If $D_x$ decreases the order of a pole $P$, then 
$$D_x(R)=D_x\left(\frac{S}{P^n}\right)=O(P^{-(n-1)}).$$
In other words, $R$ is the beginning of a series expansion of a first integral which has a pole along $P=0$ of multiplicity $n$. Looking at our first integral, the possibilities respectively for $P=x,y,x+y-1$ are
$$e^{-n\mathcal{F}(x,y)},\; e^{-\frac{n}{\lambda}\mathcal{F}(x,y)},\; e^{-\frac{n}{\mu}\mathcal{F}(x,y)}$$
Even at first order, their expansion at $P=0$ have not rational functions coefficients, and thus $D_x$ cannot decrease the order of the poles $x,y,x+y-1$. Thus $R$ has for denominator at worst $xy(x+y-1)$. The pole at the line at infinity is similarly bounded, which gives
$$R=\frac{Q}{xy(x+y-1)},\quad \deg Q \leq 4$$
The problem reduces to a linear system solving which admits no solution.
\end{proof}

Remark that most of the proof did not require the expression of the integrand $I_{10}$. We proved in fact that any telescoper for an integral on this foliation should be either $\partial_h$ or $1$.

\section{Application to planar systems solving}

\subsection{Solvability of planar systems}

Consider a planar differential equation
\begin{equation}\label{eqAB}
\dot{x}=A(x,y),\; \dot{y}=B(x,y)    
\end{equation}
where $A,B\in\mathbb{Q}(x,y)$. Such system is typically solved in two steps
\begin{itemize}
    \item We first try to solve the non autonomous equation $\partial_x y(x)=(B/A)(x,y(x))$. Such equation is solvable by quadrature if and only if an integrating factor exists \cite{singer1992liouvillian}
    \item Compute an anti derivative of $1/A(x,y(x,h))$ in $x$ where $y(x,h)$ is a general solution of the previous equation. This integral is differentially algebraic if and only if it admits a telescoper.
\end{itemize}

\begin{defi}
We say that equation \eqref{eqAB} is solvable if equation $\partial_x y(x)=(B/A)(x,y(x))$ admits an integrating factor and $\int 1/A(x,y(x,h)) dx$ admits a telescoper with virtually solvable Galois group.
\end{defi}

Remark that under such conditions, equation \eqref{eqAB} is solvable by quadratures. For the reverse proposition, additional care should be added to the definition of quadrature: the dependence of the solutions with respect to the initial conditions should also be Liouvillian functions. This is not guaranteed as indeed, telescopers in the transcendental case do not always exist, and so it is possible that each solution of \eqref{eqAB} to be solvable by quadrature, but the general solution seen as a function of time and initial conditions to be differentially transcendental.

If equation $\partial_x y(x)=(B/A)(x,y(x))$ has no rational first integral, then $y(x,h)$ is transcendental in $x$, and all possible telescopers have constant coefficients, and thus Abelian Galois groups. In the algebraic case, telescopers can be more complicated, and we face the following dilemma.
$$I_1(x,h)=\int \frac{1}{\sqrt{x^3+h}} dx$$
For any given $h$, the function $I_1(x,h)$ is Liouvillian as it can be written by iterated integrals, exponentiations and algebraic extractions. As a two variable function, this is not enough as $I_1(x,h)$ has to be written with bivariate closed $1$ form integrals, and thus understanding on its dependence on $h$ is necessary. We find the $\partial$ finite PDE system
$$ h\partial_h I_1 +\tfrac{1}{6} I_1=\frac{1}{\sqrt{x^3+h}},\quad \partial_x I_1= \frac{1}{\sqrt{x^3+h}}$$
Now as this PDE system has a virtually solvable Galois group $\simeq \mathbb{Z}/6\mathbb{Z} \propto \mathbb{C}^{+}$, we can write
$$I(x,h)=h^{-1/6} \int \frac{3h^{1/6}dx-h^{-5/6}x dh }{3\sqrt{x^3+h}} $$
This unexpected sixth degree extension $h^{1/6}$ is invisible when looking in $x$ only, but this expression proves that $I_1$ is Liouvillian as bivariate function. The integral
$$I_2(x,h)=\int \frac{1}{\sqrt{x^3+x+h}} dx$$
however is Liouvillian in $x$, but not in $x,h$. This is because the modulus of the elliptic integral depends on $h$, and thus the telescoper will have for solutions complete elliptic integrals of the first kind. Those have non virtually solvable Galois groups, and thus $I_2$ will not be Liouvillian in $x,h$ (but is differentially algebraic). This problem is difficult as no general purpose efficient algorithm exists to compute the Galois group of a linear differential equation. For such equations coming from telescopers, we conjecture the following.

\begin{conj}
If a minimal order telescoper $T$ for an integral $\int a(x,h) dx$ with $a$ algebraic in $x,h$ has a virtually solvable Galois group, then it is finite.
\end{conj}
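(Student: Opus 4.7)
The plan is to reinterpret the minimal telescoper $T$ as a Picard--Fuchs type operator and then combine the rigidity of Gauss--Manin connections with virtual solvability.

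First, I would set up the geometric picture: since $a(x,h)$ is algebraic, the form $\omega_h=a(x,h)\,dx$ lives on a smooth projective family of curves $\pi:\mathcal{C}\to U$ over an open $U\subset\mathbb{A}^1_h$, and $I(x,h)$ is a primitive of $\omega_h$ along fibres, depending on the endpoint $x$ and on $h$. Differentiating under the integral sign, $\partial_h I$ splits into a Gauss--Manin contribution (the class of $\partial_h \omega_h$ in the relative de Rham cohomology of $(\mathcal{C},D)$, with $D$ the polar divisor of the form) plus a boundary term. The minimal telescoper $T\in\mathbb{C}(h)\langle\partial_h\rangle$ annihilates the class of $\omega_h$ modulo exact forms --- the certificate $H$ of Theorem \ref{thm1} being exactly the exactness witness --- so $T$ is a Tannakian subquotient of the Gauss--Manin operator on $H^1_{\mathrm{dR}}(\mathcal{C}\setminus D/U)$.

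Second, I would invoke the classical structure theorem for Gauss--Manin connections. By Deligne's semisimplicity theorem together with the polarisation on the pure (second kind) part, the identity component $G_T^0$ of the differential Galois group is reductive on the second kind piece, while the third kind piece contributes at most a unipotent extension controlled by the residues of $\omega_h$ along the components of $D$. In particular $G_T^0$ fits into an extension of a semisimple group $S$ by a unipotent group $N$. Virtual solvability of $G_T$ forces $S$ to be both semisimple and solvable, hence trivial, so $G_T^0=N$ is unipotent. A non-trivial such $N$ would materialise as non-zero residues $\rho_i(h)$, algebraic in $h$, along some components $D_i$ of $D$; subtracting from $I$ the algebraic-logarithmic correction $\sum_i \rho_i(h)\log f_i(x,h)$ (with $f_i$ local equations for $D_i$) produces a residue-free integrand whose telescoper has strictly smaller order, contradicting minimality of $T$. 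Hence $G_T^0$ is trivial and $G_T$ is finite.

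The main obstacle is making the reductivity statement in step two robust under passage to the Tannakian subquotient cut out by the single cyclic vector $\omega_h$: \emph{a priori}, a torus factor could slip into $G_T^0$ through a rank-one subobject even when the ambient Gauss--Manin group is semisimple. A cleaner alternative route, which I would pursue in parallel, is the André--Chudnovsky--Katz theory of $G$-operators: algebraic integrands produce $G$-operators, and within this class a theorem of André asserts that virtually solvable differential Galois group implies that all solutions are algebraic, hence that the group is finite. This would yield the conjecture directly, reducing the remaining work to verifying that the minimal telescoper of an algebraic integrand really is a $G$-operator in the appropriate sense.
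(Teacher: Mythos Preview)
The paper does not prove this statement: it is explicitly presented as an open \emph{conjecture}, and the surrounding text says ``even if this conjecture holds, testing if the Galois group is finite is difficult\dots\ and thus this question in the algebraic case will be left apart.'' So there is no proof in the paper to compare your attempt against; you are trying to settle something the author leaves open.

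Your first approach sets up the right framework, and your torus worry is actually unfounded: a Tannakian subquotient has Galois group a \emph{quotient} of the ambient one, and quotients of groups whose identity component is (semisimple)-by-(unipotent) remain of that shape, so no torus appears. The genuine gap is step~4. A non-trivial unipotent $G_T^0$ need not materialise purely as residues $\rho_i(h)$ along components of $D$: it can encode the extension class between the weight-$1$ and weight-$2$ graded pieces, i.e.\ periods of the third kind over the compact cycles of the curve (logarithms of values of abelian functions, varying with $h$). Subtracting $\sum_i \rho_i(h)\log f_i(x,h)$ only kills the weight-$2$ quotient, not this extension, so the order-drop argument does not go through as written. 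Showing that for the \emph{cyclic} $\mathcal{D}$-module generated by $[\omega_h]$ every such extension splits is precisely the content of the conjecture.

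Your second route is promising but also not closed. The Katz--Chudnovsky--Andr\'e results on $G$-operators take vanishing of almost all $p$-curvatures as \emph{hypothesis} and deduce algebraicity (in the solvable-Lie-algebra case); they do not take ``virtually solvable Galois group'' as input. Since Picard--Fuchs operators are regular singular with quasi-unipotent local monodromy, the differential Galois group is the Zariski closure of global monodromy, so the question reduces to excluding infinite unipotent global monodromy for the minimal operator---which is again the gap in step~4. In short, both routes funnel into the same missing lemma, and that lemma is the conjecture itself.
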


However, even if this conjecture holds, testing if the Galois group is finite is difficult as it can be very large, and thus this question in the algebraic case will be left apart.

\subsection{Integration of telescopers}

The purpose of this subsection is to present an implementation for Corollary \ref{cor1}.\\

\noindent\underline{\sf IntegrateTelescoper}\\
\textsf{Input:} Rational functions $F,G\in\mathbb{K}(x,y)$, a telescoper $T$ and optionally integrating factor $U$.\\
\textsf{Output:} A Liouvillian expression for $\int G(x,y(x,h)) dx$
\begin{enumerate}
\item If order $\hbox{ord}$ of $T$ is $0$, return the rational solution for $\int G(x,y(x,h)) dx$.
\item Note $M$ the companion matrix of the telescoper part of $T$, and compute $A$ its Jordan form with $P$ passage matrix.
\item Compute $B_2=UP^{-1}(0,\dots,0,C)$ where $C$ is the certificate part of $T$, and $B_1=P^{-1}((G,D_hG,\dots, D_h^{\hbox{ord}} G)-FB_2)$ where $D_h=U^{-1} \partial_y$.
\item Compute $E_1=\exp(-A\int -FUdx+Udy ), E_2=\exp(A\int -FUdx+Udy )$ and
$$V=\left(\int(E_1B_1)_idx+(E_1B_2)_idy\right)_{i=1\dots \hbox{ord}}$$
\item Return $(PE_2V)_1$
\end{enumerate}

\begin{prop}
Algorithm \underline{\sf IntegrateTelescoper} returns a Liouvillian expression in $x,y$ of the form \eqref{eqliouv} which, when evaluated on $y(x,h)$, is an integral of $G(x,y(x,h))$.
\end{prop}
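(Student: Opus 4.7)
The plan is to retrace the variation of constants derivation used in the proof of Corollary \ref{cor1} and verify that each manipulation is realized by the algorithm. The base case $\hbox{ord}=0$ is immediate: then $T$ is itself of the form $I(x,h)=H(x,y(x,h))$, and step 1 returns the rational certificate. So assume $\hbox{ord}\ge 1$ and write the telescoper as $LI=U^{-(l\bmod k)}H$.

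The system $D_x J=G$, $LJ=U^{-(l\bmod k)}H$ becomes, on the vector $X=(J,D_hJ,\dots,D_h^{\hbox{ord}-1}J)^{\intercal}$, an integrable connection with
$$D_x X = (G,D_hG,\dots,D_h^{\hbox{ord}-1}G)^{\intercal}, \qquad D_h X = MX + (0,\dots,0,U^{-(l\bmod k)}H)^{\intercal}.$$
Conjugation by $P$, where $A=P^{-1}MP$ is the Jordan form (step 2), puts the homogeneous part into normal form. Setting $X=PE_2 V$ with $E_2=e^{A\mathcal{F}}$ and using $D_x\mathcal{F}=0$, $D_h\mathcal{F}=1$, one checks that $V$ must satisfy $D_x V = E_1 P^{-1}(G,D_hG,\dots)^{\intercal}$ and $D_h V = E_1 P^{-1}(0,\dots,0,U^{-(l\bmod k)}H)^{\intercal}$, where $E_1=E_2^{-1}=e^{-A\mathcal{F}}$. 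The identification $\mathcal{F}=\int -FU\,dx+U\,dy$ (from $U=\partial_y\mathcal{F}$ and $\partial_x\mathcal{F}=-FU$) is exactly the one used in step 4 to build $E_1,E_2$.

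The main step is to convert the pair $(D_xV,D_hV)$ into a genuine closed 1-form for $V$. Since $D_x=\partial_x+F\partial_y$ and $D_h=U^{-1}\partial_y$, a pair $(D_xV,D_hV)=(\mathbf a,\mathbf b)$ corresponds to $dV=(\mathbf a-FU\mathbf b)dx+U\mathbf b\,dy$. The definitions
$$B_2=UP^{-1}(0,\dots,0,C)^{\intercal},\qquad B_1=P^{-1}\!\bigl((G,D_hG,\dots,D_h^{\hbox{ord}-1}G)^{\intercal}-FB_2\bigr)$$
are precisely this translation, so $dV=E_1B_1\,dx+E_1B_2\,dy$, which is the form integrated in step 4. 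The resulting expression $PE_2V$ (step 5) therefore satisfies $D_x(PE_2V)=PE_2D_xV=(G,D_hG,\dots)^{\intercal}$ as required; the first component is an antiderivative of $G$ along the foliation.

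The last point is to verify that the output has the shape \eqref{eqliouv}. Because $A$ is in Jordan form, entries of $E_2=e^{A\mathcal{F}}$ are $\mathbb{C}$-linear combinations of terms $\mathcal{F}^r e^{\lambda\mathcal{F}}$ with $\lambda$ in the spectrum of $M$, and symmetrically for $E_1$; the powers $U^j$ arise from the factor $U$ in $B_2$ together with the $U^{-(l\bmod k)}$ appearing in the certificate and from rewriting $D_h^j G$ using $D_h=U^{-1}\partial_y$. Assembling the first component of $PE_2V$ yields exactly the form of \eqref{eqliouv}. The subtle point I expect to be the main obstacle is confirming that the 1-form $E_1B_1\,dx+E_1B_2\,dy$ is indeed closed and that its coefficients in the $e^{-\lambda\mathcal{F}}$ expansion have rational coefficients (so that the integrals in step 4 make sense within the Liouvillian tower); both facts reduce to the integrability of the connection, which is exactly the statement that $T$ is a valid telescoper for $G$, i.e. the output of \underline{\sf CheckTelescoper} is True.
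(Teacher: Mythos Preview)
Your argument is correct and follows the same variation-of-constants strategy as the paper's proof, which likewise reduces to the integrable connection of Corollary~\ref{cor1}, passes to Jordan form, and reads off the first component of $PE_2V$. You actually go further than the paper in two respects: you spell out explicitly the conversion from the pair $(D_xV,D_hV)$ to the closed $1$-form $E_1B_1\,dx+E_1B_2\,dy$ via $\partial_x=D_x-FUD_h$, $\partial_y=UD_h$ (the paper only writes down the final variation-of-constants formula), and you address why the output has the shape~\eqref{eqliouv}, which the paper's proof does not verify.
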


\begin{proof}
We follow proof of Corollary \ref{cor1}. If order of $T$ is zero, then it can be written, $I(x,h)=R(x,y(x,h))$, meaning that the integral is rational, given by $R(x,y(x,h))$. So step $1$ is correct. Else we need to solve the PDE system
$$D_x I=G,\quad \sum\limits_{i=0}^{\hbox{ord}} a_i D_h^i I=C$$
where $D_h=U^{-1}\partial_y$ and $D_x=\partial_x+F\partial_y$. The solutions of the homogeneous part of this system are $(\exp(M\int -FUdx+Udy ))_1$ where $M$ is the companion matrix of the telescoper part of $T$. Step $2$ computes the Jordan form of $M$ and in step $3$ we perform the basis change on the non homogeneous part to put the whole system under Jordan form. Finally, we use variation of constant formula
$$e^{M\int -FUdx+Udy } \int e^{-M\int -FUdx+Udy } \left(U(0,\dots,0,C)dx +
((G,D_hG,\dots, D_h^{\hbox{ord}} G-FUC) dy \right)$$
to find the particular solution. In the new basis, the formula is just $E_2V$ where $V$ is computed in steps $3,4$. Then step $5$ returns the solution in the original basis, selecting the first line of it as it corresponds to $I$ (other lines are $D_h^i I$).
\end{proof}

Remark that the practical implementation in Maple of this algorithm relies on the capability of Maple to perform the integration in step $4$. It could seem that we did not progress, as the initial objective of the article was an integral computation, and that now we need to compute several ones! However, these integrals are always $\partial$ finite (in contrary to the initial integral which is typically not $\partial$ finite), thus are much easier to compute.

\subsection{Solver algorithm}

\begin{prop}\label{propds}
If equation \eqref{eqAB} is solvable and has no rational first integrals, then there exists $N\in\mathbb{N}$ such that algorithm \underline{PlaneDSolve} returns for almost all $(x_0,y_0)$ two relations
$$\{J_1(x,y)=c_1,\; J_2(x,y)=c_2+t\}$$
where $J_1,J_2$ are Liouvillian functions. If equation \eqref{eqAB} has a rational first integral $J_1$, then there exists $N\in\mathbb{N}$ such that algorithm \underline{PlaneDSolve} returns for almost all $(x_0,y_0)$ two relations
$$\left\lbrace J_1(x,y)=c_1,\; \int a(x,c_1) dx=c_2+t\right\rbrace$$
where $a$ is algebraic.
\end{prop}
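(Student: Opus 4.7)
The plan is to combine the algorithms of the preceding sections into PlaneDSolve roughly as follows: first search for a rational first integral up to the bound $N$; if none is found, apply \underline{\sf IntegratingFactor} to obtain $U$ and the associated Liouvillian first integral $J_1$; then apply \underline{\sf FindTelescoper} to the integral $\int dx/A(x,y(x,h))$ which encodes time along trajectories; finally apply \underline{\sf IntegrateTelescoper} to produce $J_2$. The proof is then a case analysis, essentially verifying that for $N$ chosen large enough each sub-algorithm succeeds and the returned pair has the asserted form.

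\textbf{Case with no rational first integral.} The planar system being solvable provides, by definition, an integrating factor $U$ for $\partial_x y=(B/A)(x,y)$ and a telescoper for $I(x,h)=\int dx/A(x,y(x,h))$. Choose $N$ large enough so that (i) $\partial_y U/U$ has numerator and denominator degree $\le N$, (ii) the promised telescoper has order and certificate degree $\le (\hbox{ord},N)$. By Proposition \ref{propirr}, for $(x_0,y_0)$ outside a codimension $1$ algebraic set, \underline{\sf IntegratingFactor} returns the minimal $U$, from which we recover $J_1=\mathcal F\in$ Liouvillian by one quadrature $\mathcal F=\int -FUdx+Udy$. By Theorem \ref{thm2}, \underline{\sf FindTelescoper} applied to $G=1/A$ returns, again outside a codimension $1$ set, a valid telescoper for $I(x,h)$. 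Plugging this into \underline{\sf IntegrateTelescoper} produces a Liouvillian expression $J_2(x,y)$ of the form \eqref{eqliouv}; by construction $D_xJ_2=1/A$ and $D_hJ_2$ is controlled by the certificate, so $J_2(x,y(x,h))$ is an antiderivative in $t$ of $1/A$ along the trajectory, i.e.\ $J_2(x,y)=t+c_2$ on any solution. Thus the pair $\{J_1=c_1,\;J_2=c_2+t\}$ is returned.

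\textbf{Case with a rational first integral.} Take $N$ larger than the degree of the minimal rational first integral. The search from section~3.1 then recovers $J_1\in\mathbb K(x,y)$, and the algorithm switches branch: on a generic level $\{J_1=c_1\}$, the system reduces to a single ODE $\dot x=A(x,y)$ with $y$ an algebraic function of $x$ parametrized by $c_1$. The second equation is obtained as $t+c_2=\int dx/A(x,y(x,c_1))$, whose integrand is manifestly algebraic in $x$ with parameter $c_1$, yielding the announced form $\int a(x,c_1)dx=c_2+t$. The ``almost all $(x_0,y_0)$'' comes from removing the zero set of the denominator of $F$, the critical level curves of $J_1$, and the codimension $1$ failure set of the rational first integral search.

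\textbf{Main obstacle.} The delicate point is ruling out the failure cases of the sub-algorithms simultaneously. Each of \underline{\sf IntegratingFactor}, \underline{\sf FindTelescoper} and the rational first integral search can return FAIL on its own codimension $1$ set (zeros of extatic curves, Darboux polynomials appearing in reduction steps, vanishing of $U$ at $(x_0,y_0)$, etc.). One has to check that the \emph{finite} union of these sets is still of codimension $1$, using in each sub-algorithm the hypothesis that no rational first integral of bounded degree exists (so that only finitely many Darboux polynomials contribute, by \cite{Jou}), and invoke the corresponding statements (Proposition \ref{propirr}, Theorem \ref{thm2}) so that ``almost all $(x_0,y_0)$'' is preserved through the composition. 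A secondary technical point is that \underline{\sf IntegrateTelescoper} needs the telescoper returned by \underline{\sf FindTelescoper} to match the integrating factor found in the previous step; minimality of $U$ as guaranteed by Proposition \ref{propirr} is exactly what ensures \underline{\sf FindTelescoper} is run with the optimal structural ansatz of Theorem \ref{thm1}, so that no valid telescoper is missed.
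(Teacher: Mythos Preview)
Your proposal is correct and follows essentially the same route as the paper: a case split on the existence of a rational first integral, invoking Proposition~\ref{propirr} and Theorem~\ref{thm2} to guarantee that for $N$ large enough each sub-algorithm succeeds outside a codimension~$1$ set, and then feeding the output into \underline{\sf IntegrateTelescoper}. Your treatment is in fact more explicit than the paper's terse proof about the union of failure loci and about why minimality of $U$ matters for \underline{\sf FindTelescoper}, both of which the paper leaves implicit.
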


As before, the point $(x_0,y_0)$ is used as the initial point for series expansions, and it should avoid a codimension $1$ algebraic set as else a FAIL answer could be returned. As we see, in the second case, the integral $\int a(x,c_1) dx$ always has a telescoper, but possibly not virtually solvable. And thus we cannot always remove the integration constant $c_1$ from inside the integral and write it as an integral of a closed $1$ form in $x,y$.\\

\noindent\underline{\sf PlaneDSolve}\\
\textsf{Input:} A vector field $(A,B)$ with $A,B\in\mathbb{Q}(x,y)$, a point $(x_0,y_0)$, an integer $N\in\mathbb{N}^*$.\\
\textsf{Output:} An implicit expression for the solutions, or FAIL, or ``None''
\begin{enumerate}
\item Look for rational first integrals of degree $2N$. If FAIL return FAIL. If one $J_1$, return
$$\left\lbrace J_1(x,y)=c_1,\; \int a(x,c_1) dx=c_2+t\right\rbrace$$
where $a(x,c_1)$ is $1/A(x,y(x,c_1))$ with $y(x,c_1)$ a solution of $J_1(x,y)=c_1$.
\item Search for an integrating factor of degree $\leq N$. If FAIL return FAIL. If ``None'' return ``None''. Else note it $U$.
\item For all $\deg,\hbox{ord}\in\mathbb{N}^2$ such that $\tfrac{1}{2}(\deg+1)(\deg+2)(\hbox{ord}+2)\leq \tfrac{3}{2}(N+1)(N+2)$
search a telescoper for $1/A$ with equation $\partial_x y= (B/A)(x,y)$, point $(x_0,y_0)$, degree $\deg$, order $\hbox{ord}$ and integrating factor $U$. If FAIL return FAIL. If none, return ``None''.
\item Else note one $T$, and apply \underline{\sf IntegrateTelescoper} with it, giving a Liouvillian expression $J_2$. Return
$$\left\lbrace \int -\tfrac{B}{A}Udx+Udy=c_1,\; J_2(x,y)=c_2+t\right\rbrace$$
\end{enumerate}

\begin{proof}
By construction, the algorithm terminates, and returns an answer of the possible forms of Proposition \ref{propds}. If $(A,B)$ admits a rational first integral, then it will be discovered for $N$ large enough in step $1$, and thus step $1$ will return the solution, or FAIL if $(x_0,y_0)$ belongs to a codimension $1$ algebraic set (so of zero measure).
If $(A,B)$ does not admit a rational first integral but an integrating factor, then step $2$ will discover it for $N$ large enough. Then in step $3$ we search for telescopers. For any pair $(\deg,\hbox{ord})$, a telescoper with these degree and order will be found if $N$ is large enough. Then step $4$ will integrate it (possibly only in a formal way with keeping the $\int$ symbol), and return the Liouvillian first integral $\int -\tfrac{B}{A}Udx+Udy$ and the expression for the integral of $1/A$ along the foliation given by equation $\partial_x y= (B/A)(x,y)$.
\end{proof}

Remark that obtaining the FAIL answer is a good sign. Indeed, either $(x_0,y_0)$ was very badly chosen (zero probability), or the system admits a rational first integral which provoked a FAIL answer in step $2$ or $3$. The bound $2N$ in step $1$ is chosen for practical reasons such that time computation of this step is similar to the next ones. We could ensure that for any given $N$, the FAIL answer only occurs on a codimension $1$ algebraic set, but this would require to check the existence of rational first integral of degree $O(N\hbox{ord}^3)$ which would cost a lot computation time for typically nothing.

Applying \underline{\sf PlaneDSolve} for all $N\in\mathbb{N}$ gives a semi algorithm for the solving \eqref{eqAB} by quadrature: if it is indeed solvable, the solution will be found. However, if it is not, then \underline{\sf PlaneDSolve} will never be able to prove it. It is nevertheless better than heuristics available, as it can never miss a solution, provided that $N$ is large enough, which encodes somehow the complexity of the solution the user is searching for.

\section{Examples}

\subsection{Simple extensions}

We can take for $y(x)$ an elementary expression, which happen to satisfy a differential equation, and use the telescoper algorithm to find expressions for integrals. We will restrict ourselves to $\mathbb{K}=\mathbb{Q}$, which is an actual restriction for the implementation.\\

\noindent
\textbf{Example} 2: $y=\left(\frac{x-\sqrt{2}}{x+\sqrt{2}}\right)^{\sqrt{2}},\; \partial_x y=\frac{4y}{x^2-2},\; U=\frac{1}{y}$\\
$$I_5=\int \left(\frac{x-\sqrt{2}}{x+\sqrt{2}}\right)^{\sqrt{2}} dx,\quad
D_h I_5-I_5=0,\quad
I_5=\int \left(\frac{x-\sqrt{2}}{x+\sqrt{2}}\right)^{\sqrt{2}} dx  $$
This example $I_5$ seems ridiculous as no integration has been performed effectively. The integrals in the answer are guaranteed to be integrals of hyperexponential forms in $x,y$, and in this example, the initial integrand in $x$ was already hyperexponential.
$$I_6=\int \frac{(x^2+2) \left(\frac{x-\sqrt{2}}{x+\sqrt{2}}\right)^{\sqrt{2}} }{(x^2-2)^2\left(\left(\frac{x-\sqrt{2}}{x+\sqrt{2}}\right)^{\sqrt{2}} + 1\right)} dx,\; D_h^2I_6-\tfrac{1}{2}I_6= \frac{2x^2y^2+5x^2y+2xy^2+2x^2+2xy-4y^2-6y-4}{4(y+1)^2(x^2-2)}$$
$$I_6= -\frac{(6x^5+40x^3+24x)\sqrt{2}+x^6+30x^4+60x^2+8}
{(32x^5-128x)\sqrt{2}+8x^6+80x^4-160x^2-64}\Phi\left(-\left(\frac{x-\sqrt{2}}{x+\sqrt{2}}\right)^{\sqrt{2}},1,-\frac{\sqrt{2}}{2}\right)$$
$$-\frac{(2x\sqrt{2}+x^2+2)(x^2-2)}{(32x^3+64x)\sqrt{2}+8x^4+96x^2+32}\Phi\left(-\left(\frac{x-\sqrt{2}}{x+\sqrt{2}}\right)^{\sqrt{2}},1,\frac{\sqrt{2}}{2}\right)
-\frac{(x+2)(x-1)}{x^2 - 2}$$
where $\Phi$ is the Lerch function.

$$I_7=\int \frac{(x^2+2x+2)\left(\frac{x-\sqrt{2}}{x+\sqrt{2}}\right)^{2\sqrt{2}}+(x^2-2)\left(\frac{x-\sqrt{2}}{x+\sqrt{2}}\right)^{\sqrt{2}}}
{(x^2-2)\left((x^2-2)\left(\frac{x-\sqrt{2}}{x+\sqrt{2}}\right)^{2\sqrt{2}}+2(x^2+2)\left(\frac{x-\sqrt{2}}{x+\sqrt{2}}\right)^{\sqrt{2}}+x^2-2\right)} dx$$
$$ D_h^2I_7+2D_hI_7-I_7=$$
\begin{small} $$\frac{((7y^2+14y+3)x+16y^2)(y+1)^2x^3-4(3y^4-16y^3-6y^2+8y+3)x^2-4(8y^2x+y^2-2y-3)(y-1)^2}{16(x^2y^2+2x^2y+x^2-2y^2+4y-2)^2} 
$$ \end{small}
In this last case, Maple is not able to perform the computation of integrals required by algorithm \underline{\sf IntegrateTelescoper} correctly. The integrals are integrals of hyperexponential $1$-forms, and thus can always be represented using single variable integrals \cite{combot2019symbolic}, giving the following formula
$$
I_7=e^{\frac{1}{1+\sqrt{2}}(\ln y  + \sqrt{2}\ln\left(\frac{x+\sqrt{2}}{x -\sqrt{2}} \right)}
\int^{y\frac{x-\sqrt{2}}{x+\sqrt{2}}} \frac{z^{-\frac{1}{1 + \sqrt{2}}}}{8(z + 1)} dz + e^{\frac{1}{1-\sqrt{2}}(\ln y  + \sqrt{2}\ln\left(\frac{x+\sqrt{2}}{x -\sqrt{2}} \right)}
\int^{y\frac{x+\sqrt{2}}{x -\sqrt{2}}} \frac{z^{-\frac{1}{1 - \sqrt{2}}}}{8(z + 1)} dz
$$
Remark that similarly to Proposition \ref{propex}, all these integrals can be expressed using the Lerch function $\Phi$.\\

\noindent
\textbf{Example} 3: $y=e^{-x^2}\sqrt{\pi}\hbox{erf}(x)$\\
$$I_8=\int \frac{e^{3x^2}\hbox{erf}(x)^3 \pi^{3/2} + 3\pi \hbox{erf}(x)^2e^{2x^2} + 2e^{x^2}\sqrt{\pi} \hbox{erf}(x)x^2+2x}{e^{x^2} \hbox{erf}(x)\sqrt{\pi}x} dx$$
$$D_h^3 I_8=-\frac{2e^{3x^2}}{y^3},\quad I_8=\int \frac{\hbox{erf}(x)^2 e^{2x^2}\pi +3\sqrt{\pi}\hbox{erf}(x) e^{x^2} }{x} dx +  \ln\left( e^{x^2}\hbox{erf}(x)\right)$$
The remaining integral cannot be written using a classical special function. However, this last expression is significantly better than the previous one as the integral is now a D finite integral. As given by Corollary \ref{cor1}, all resulting integrals are D finite (even Liouvillian), contrary to an input in $\mathbb{C}(x,e^{-x^2}\hbox{erf}(x))$ which is typically not D finite.\\

\noindent
\textbf{Example} 4: $y=\frac{\hbox{tan}(\Pi(x,-1,2))}{\sqrt{(x^2-1)(4x^2-1)}}$, 
$\partial_x y= \frac{1+(4x^4-5x^2+1)y^2-(8x^5+3x^3-5x)y}{(4x^2-1)(x^4-1)}$, 
$U=-\frac{\sqrt{(x^2+1)(4x^2-1)}}{1+(4x^4-5x^2+1)y^2}$\\
$$I_9=\int \frac{\hbox{tan}(\Pi(x,-1,2))^4 +2(1-\hbox{tan}(\Pi(x,-1,2))^2)\frac{\hbox{tan}(\Pi(x,-1,2))}{\sqrt{(x^2-1)(4x^2-1)}} + 2\hbox{tan}(\Pi(x,-1,2))^2}
{2(4x^6 - x^4 - 4x^2 + 1)(1 + \hbox{tan}(\Pi(x,-1,2))^2) \frac{\hbox{tan}(\Pi(x,-1,2))}{\sqrt{(x^2-1)(4x^2-1)}}} dx$$
$$D_h^3I_8+4D_h I_8=-\frac{(4x^4y^2-5x^2y^2+y^2+1)^3}{((x^2-1)(4x^2-1))^{3/2}y^3}$$
The integral $I_8$ can be written with integrals of $e^{-2i\Pi(x,-1,2)}$ and elementary functions.\\

The cases with an algebraic integrating factor are the most complicated, as they typically involve superelliptic integrals. Sadly, Weierstrass elliptic functions do not satisfy a first order rational differential equation (it is algebraic), and thus do not fit in this framework.

\subsection{Darbouxian foliations}

\noindent
\textbf{Example} 5: $\mathcal{F}(x,y)=\ln (1+x^2y)+
\sqrt{2}\ln\left(\frac{x^2+y\sqrt{2}}{x^2-y\sqrt{2}}\right)$\\
This is a Darbouxian foliation with a Darboux polynomial $x$ which is not a singularity of $\mathcal{F}$ but even more on which $d\mathcal{F}$ vanishes.\\
$$I_9=\int_{\mathcal{F}(x,y)=h} \!\!\!\!\!\!\!\! \frac{y^2x(x^2y^6-4y^7-4x^2y^4+19y^5+x^2y^2+2y^3+2x^2-8y)}{(x^4+4x^2y-2y^2+4)(y^2+2)^5} dx$$
We find a telescoper of order $0$, and thus a rational expression for $I_9=$
$$\frac{10x^4y^8+242x^4y^6-81x^2y^7+402x^4y^4-162x^2y^5+590x^4y^2-81y^6+376x^4-324y^4-324y^2}{648(y^2+2)^4x^4}$$
Remark that the integrand has no pole at $x=0$, but the integral expression has a pole at $x=0$ with multiplicity $4$. In contrary to rational integration, the integral even if rational can display new poles. Of course, if one would substitute $y$ by its expression in $x$, we could directly see if the integrand and the integral has indeed a pole as a function in $x$. However, expression of $y(x)$ will be here very complicated.\\

\subsection{Vector fields}

\noindent
\textbf{Example} 7: $\mathcal{F}(x,y)=\ln x +\lambda\ln y + \mu \ln (x+y) $
$$ \dot{x}=\frac{(\mu+\lambda)y+\lambda x}{y(y+x)},\; \dot{y}=-\frac{\mu x+x+y}{x(y+x)}$$
This is a Hamiltonian vector field, and thus automatically possesses a first integral, here $\mathcal{F}(x,y)$. The corresponding integrating factor is
$$U=\frac{\lambda x+\lambda y+\mu y}{\lambda\mu y(x+y)}$$
Following the previous proof of section 4, the reasoning fails as a pullback function $V$ is now possible, $V=x/y$. In fact the first integral can be rewritten
$$\mathcal{F}(x,y)=(1-\lambda-\mu)\ln x +\lambda\ln (y/x) + \mu \ln (1+y/x) $$
and thus $\exp(\mathcal{F}(x,y)/(1-\lambda-\mu))$ is now decomposable. Thus non trivial telescopers are a priori possible. Integration in time for $x$ requires the computation of the integral
$$I_{11}=\int_{\mathcal{F}(x,y)=h}  \frac{y(x+y)}{\lambda x+(\lambda +\mu) y} dx$$
As the current implementation only supports rational coefficients, we have to choose for $\lambda,\mu$ rational numbers thus producing a rational first integral (in principle we could do the computations in $\mathbb{K}=\mathbb{Q}(\lambda,\mu)$, but current implementation does not terminate). However, if large enough, the rational first integral will have no consequences on the calculations. For $\lambda=1/3,\; \mu=1/5$, we find the telescoper for the integral
$$D_hI_{11}-\frac{2}{23} I_{11}=-\frac{15(13x^2y+13xy^2-10x-16y}{299(5x+8y)}$$
and the solution of the differential system
$$-\tfrac{30}{13}\ln x- \tfrac{10}{13} \ln y - \tfrac{6}{13} \ln(x + y) = c_1,$$
$$x^{\frac{30}{23}} y^{\frac{10}{23}} (x+y)^{\frac{6}{23}}
\int \frac{\tfrac{15}{x}(13x^2y+13xy^2+36x+30y)dx-\tfrac{15}{y}(13x^2y+13xy^2-10x-16y)dy}{299x^{30/23}y^{10/23}(x+y)^{29/23}}=t+c_2$$
Such result can be generalized by guessing the telescoper for several $\lambda,\mu$ and checking it, giving the telescoper formula
$$D_hI_{11} - \frac{2\mu\lambda}{\mu+1+\lambda} I_{11} =
-\frac{\mu\lambda (2\lambda x^2y+2\lambda xy^2+\mu x^2y+\mu xy^2-2\lambda x - 2\lambda y - 2\mu y)}{(2\lambda+\mu)(\mu + 1 +\lambda)(\lambda x + \lambda y +\mu y)}$$
and the solution of the differential system
$$\ln x  + \lambda \ln y + \mu \ln(x+y)=c_1, t+c_2=e^{2\frac{\ln x + \lambda\ln y + \mu\ln(x+y)}{\mu+1+\lambda}} \times$$
$$ \int
\frac{\tfrac{1}{x}((2\lambda+\mu)(x^2y+xy^2) +2(\mu+1)x+2y) dx -\tfrac{1}{y}(2(x+y)(xy-1)\lambda + y\mu (x^2+xy-2))dy}
{e^{2\frac{\ln x + \lambda\ln y + \mu\ln(x+y)}{\mu+1+\lambda}}(2\lambda+\mu)(\mu+1+\lambda)(x+y)} $$
As proved in \cite{combot2019symbolic}, this last integral can then be expressed using a single variable integral which can be written in terms of a (solvable) hypergeometric function.\\

\noindent
\textbf{Example} 8:
$$\dot{x}=-\frac{xy(y^2-2)}{xy^2-2x+4y},\;\;\dot{y}=-\frac{y(y^2-2)^2}{4xy^2-8x+16y}$$
The solution returned is
$$\tfrac{1}{4}\ln(x) - \tfrac{\sqrt{2}}{2}\hbox{arctanh}\left(y\tfrac{\sqrt{2}}{2}\right)  = c_1,\quad 
x e^{-2\sqrt{2}\hbox{arctanh}\left(\tfrac{\sqrt{2}}{2}y\right)}
\int e^{2\sqrt{2} \hbox{arctanh}\left(\tfrac{\sqrt{2}}{2}y\right)} y^{-2} dy +$$
$$\tfrac{21\sqrt{2}}{2}\hbox{arctanh}\left(\tfrac{\sqrt{2}}{2}y\right) - \tfrac{25}{4}\ln x+
\frac{4xy^2-25y^3+16y^2-8x+50y}{4y(y^2-2)} = t + c_2$$
We recognize the foliation of example 2, and the unevaluated integral could be expressed in terms of the Lerch function $\Phi$. Maple dsolve command does not seem to terminate on this example.\\

\noindent
\textbf{Example} 9:
$$\dot{x}=y,\;\; \dot{y}=-\frac{4x^3-3xy^2-4x^2 + y^2}{2(x-1)x}$$
The solution returned is
$$-\frac{x(x-1)^2}{4x^2-y^2-4x}=c_1,\quad  - \mathfrak{F}\left(\frac{y \sqrt{x-1}}{\sqrt{4x^3-4x^2-y^2}}, \frac{\sqrt{4x^3-4x^2-y^2}}{\sqrt{x}(2x-2)}\right) =t+c_2$$
where $\mathfrak{F}$ denotes the elliptic integral of the first kind. This vector field admits a rational first integral, and the time integration involves an elliptic integral whose modulus depends on the value of the rational first integral. Thus the telescoper for the integral will be equivalent to the equation for the complete integral of the first kind, and so not solvable. In particular, the integral represented with $\mathfrak{F}$ is not Liouvillian as a two variables function in $x,y$.

\section{Conclusion}

If an integral has a telescoper, then for a generic initial point $(x_0,y_0)$, the algorithms will find one provided that the degree and order bounds are large enough. However, nothing guarantees that the telescoper order will be minimal. If it is not, then during the integration process, there will be cancellation between integrals. To detect such cancellation, we would need an equivalent of Hermite reduction for the differential forms. In the case of hyperexponential forms, we have a reduction algorithm \cite{combot2019symbolic}. It seems that similarly for more complicated foliations, the integrals can always be represented as composition of single variable integrals and algebraic functions in $x,y$. Such representation would allow simplification of the formulas and possibly represent the solutions with standard special functions. For some foliations as given in section 4, more specific algorithms are probably possible once the possible expressions for integrals admitting a telescoper are known. Finally, for the problem of solving planar differential systems, a last step would be the inversion, so to write $x,y$ in function of $t$. This inversion is typically very complicated, but looking for meromorphic solutions in time, it seems that only functions involving $\exp,\wp$ and algebraic extensions are necessary.

\label{}
\bibliographystyle{plain}
\bibliography{bibthese}

\end{document}